\let\origsection=\section \def\section{\@ifstar{\origsection*}{\mysection}}
\def\mysection{\@startsection{section}{1}\z@{.7\linespacing\@plus\linespacing}{.5\linespacing}{\normalfont\scshape\centering\S}}
\renewcommand{\PrintDOI}[1]{\doi{#1}}
\numberwithin{equation}{section}
\numberwithin{figure}{section}
\let\polishlcross=\l
\def\l{\ifmmode\ell\else\polishlcross\fi}
\let\emptyset=\varnothing
\let\setminus=\smallsetminus
\def\moverlay{\mathpalette\mov@rlay}
\def\mov@rlay#1#2{\leavevmode\vtop{   \baselineskip\z@skip \lineskiplimit-\maxdimen
		\ialign{\hfil$\m@th#1##$\hfil\cr#2\crcr}}}
\newcommand{\charfusion}[3][\mathord]{
	#1{\ifx#1\mathop\vphantom{#2}\fi
		\mathpalette\mov@rlay{#2\cr#3}
	}
	\ifx#1\mathop\expandafter\displaylimits\fi}
\newcommand{\dcup}{\charfusion[\mathbin]{\cup}{\cdot}}
\DeclareFontFamily{U}  {MnSymbolC}{}
\DeclareSymbolFont{MnSyC}         {U}  {MnSymbolC}{m}{n}
\DeclareFontShape{U}{MnSymbolC}{m}{n}{
	<-6>  MnSymbolC5
	<6-7>  MnSymbolC6
	<7-8>  MnSymbolC7
	<8-9>  MnSymbolC8
	<9-10> MnSymbolC9
	<10-12> MnSymbolC10
	<12->   MnSymbolC12}{}
\DeclareMathSymbol{\powerset}{\mathord}{MnSyC}{180}
\newcommand{\pedge}[9]{
	
	\ifx\relax#6\relax
	\def\qoffs{0pt}
	\else
	\def\qoffs{#6}
	\fi
	
	\def\phedge{
		($#1+#5!\qoffs!-90:#2-#5$) -- 
		($#2+#1!\qoffs!-90:#3-#1$) -- 
		($#3+#2!\qoffs!-90:#4-#2$) -- 
		($#4+#3!\qoffs!-90:#5-#3$) -- 
		($#5+#4!\qoffs!-90:#1-#4$) -- cycle}

	\coordinate (12) at ($#1!\qoffs!90:#2$);
	\coordinate (15) at ($#1!\qoffs!-90:#5$);
	\coordinate (23) at ($#2!\qoffs!90:#3$);
	\coordinate (21) at ($#2!\qoffs!-90:#1$);
	\coordinate (34) at ($#3!\qoffs!90:#4$);
	\coordinate (32) at ($#3!\qoffs!-90:#2$);
	\coordinate (45) at ($#4!\qoffs!90:#5$);
	\coordinate (43) at ($#4!\qoffs!-90:#3$);
	\coordinate (51) at ($#5!\qoffs!90:#1$);
	\coordinate (54) at ($#5!\qoffs!-90:#4$);

	\def\nphedge{
		(15) let \p1=($(15)-#1$), \p2=($(12)-#1$) in 
		arc[start angle={atan2(\y1,\x1)}, delta angle={atan2(\y2,\x2)-atan2(\y1,\x1)-360*(atan2(\y2,\x2)-atan2(\y1,\x1)>0)}, x radius=\qoffs, y radius=\qoffs] --
		(21) let \p1=($(21)-#2$), \p2=($(23)-#2$) in 
		arc[start angle={atan2(\y1,\x1)}, delta angle={atan2(\y2,\x2)-atan2(\y1,\x1)-360*(atan2(\y2,\x2)-atan2(\y1,\x1)>0)}, x radius=\qoffs, y radius=\qoffs] --
		(32) let \p1=($(32)-#3$), \p2=($(34)-#3$) in 
		arc[start angle={atan2(\y1,\x1)}, delta angle={atan2(\y2,\x2)-atan2(\y1,\x1)-360*(atan2(\y2,\x2)-atan2(\y1,\x1)>0)}, x radius=\qoffs, y radius=\qoffs] --
		(43) let \p1=($(43)-#4$), \p2=($(45)-#4$) in 
		arc[start angle={atan2(\y1,\x1)}, delta angle={atan2(\y2,\x2)-atan2(\y1,\x1)-360*(atan2(\y2,\x2)-atan2(\y1,\x1)>0)}, x radius=\qoffs, y radius=\qoffs] --
		(54) let \p1=($(54)-#5$), \p2=($(51)-#5$) in 
		arc[start angle={atan2(\y1,\x1)}, delta angle={atan2(\y2,\x2)-atan2(\y1,\x1)-360*(atan2(\y2,\x2)-atan2(\y1,\x1)>0)}, x radius=\qoffs, y radius=\qoffs] --
		cycle}

	\ifx\relax#7\relax
	\def\plwidth{1pt}
	\else
	\def\plwidth{#7}
	\fi
	
	\ifx\relax#9\relax
	\fill \nphedge;
	\else
	\fill[#9]\nphedge;
	\fi
	
	\ifx\relax#8\relax
	\draw[line width=\plwidth,rounded corners=\qoffs]\nphedge;
	\else
	\draw[line width=\plwidth,#8]\nphedge;
	\fi
}
\newcommand{\qedge}[7]{
	
	\ifx\relax#4\relax
	\def\qoffs{0pt}
	\else
	\def\qoffs{#4}
	\fi
	
	\def\qhedge{
		($#1+#3!\qoffs!-90:#2-#3$) --
		($#2+#1!\qoffs!-90:#3-#1$) --
		($#3+#2!\qoffs!-90:#1-#2$) -- cycle}

	\coordinate (12) at ($#1!\qoffs!90:#2$);
	\coordinate (13) at ($#1!\qoffs!-90:#3$);
	\coordinate (23) at ($#2!\qoffs!90:#3$);
	\coordinate (21) at ($#2!\qoffs!-90:#1$);
	\coordinate (31) at ($#3!\qoffs!90:#1$);
	\coordinate (32) at ($#3!\qoffs!-90:#2$);
	
	\def\nqhedge{
		(13) let \p1=($(13)-#1$), \p2=($(12)-#1$) in
		arc[start angle={atan2(\y1,\x1)}, delta angle={atan2(\y2,\x2)-atan2(\y1,\x1)-360*(atan2(\y2,\x2)-atan2(\y1,\x1)>0)}, x radius=\qoffs, y radius=\qoffs] --
		(21) let \p1=($(21)-#2$), \p2=($(23)-#2$) in
		arc[start angle={atan2(\y1,\x1)}, delta angle={atan2(\y2,\x2)-atan2(\y1,\x1)-360*(atan2(\y2,\x2)-atan2(\y1,\x1)>0)}, x radius=\qoffs, y radius=\qoffs] --
		(32) let \p1=($(32)-#3$), \p2=($(31)-#3$) in
		arc[start angle={atan2(\y1,\x1)}, delta angle={atan2(\y2,\x2)-atan2(\y1,\x1)-360*(atan2(\y2,\x2)-atan2(\y1,\x1)>0)}, x radius=\qoffs, y radius=\qoffs] --
		cycle}
	
	\ifx\relax#5\relax
	\def\qlwidth{1pt}
	\else
	\def\qlwidth{#5}
	\fi
	
	\ifx\relax#7\relax
	\fill \nqhedge;
	\else
	\fill[#7]\nqhedge;
	\fi
	
	\ifx\relax#6\relax
	\draw[line width=\qlwidth,rounded corners=\qoffs]\nqhedge;
	\else
	\draw[line width=\qlwidth,#6]\nqhedge;
	\fi
}
\newcommand{\redge}[8]{
	
	\ifx\relax#5\relax
	\def\qoffs{0pt}
	\else
	\def\qoffs{#5}
	\fi
	
	\def\rhedge{
		($#1+#4!\qoffs!-90:#2-#4$) -- 
		($#2+#1!\qoffs!-90:#3-#1$) -- 
		($#3+#2!\qoffs!-90:#4-#2$) -- 
		($#4+#3!\qoffs!-90:#1-#3$) -- cycle}

	\coordinate (12) at ($#1!\qoffs!90:#2$);
	\coordinate (14) at ($#1!\qoffs!-90:#4$);
	\coordinate (23) at ($#2!\qoffs!90:#3$);
	\coordinate (21) at ($#2!\qoffs!-90:#1$);
	\coordinate (34) at ($#3!\qoffs!90:#4$);
	\coordinate (32) at ($#3!\qoffs!-90:#2$);
	\coordinate (41) at ($#4!\qoffs!90:#1$);
	\coordinate (43) at ($#4!\qoffs!-90:#3$);
	
	\def\nrhedge{
		(14) let \p1=($(14)-#1$), \p2=($(12)-#1$) in 
		arc[start angle={atan2(\y1,\x1)}, delta angle={atan2(\y2,\x2)-atan2(\y1,\x1)-360*(atan2(\y2,\x2)-atan2(\y1,\x1)>0)}, x radius=\qoffs, y radius=\qoffs] --
		(21) let \p1=($(21)-#2$), \p2=($(23)-#2$) in 
		arc[start angle={atan2(\y1,\x1)}, delta angle={atan2(\y2,\x2)-atan2(\y1,\x1)-360*(atan2(\y2,\x2)-atan2(\y1,\x1)>0)}, x radius=\qoffs, y radius=\qoffs] --
		(32) let \p1=($(32)-#3$), \p2=($(34)-#3$) in 
		arc[start angle={atan2(\y1,\x1)}, delta angle={atan2(\y2,\x2)-atan2(\y1,\x1)-360*(atan2(\y2,\x2)-atan2(\y1,\x1)>0)}, x radius=\qoffs, y radius=\qoffs] --
		(43) let \p1=($(43)-#4$), \p2=($(41)-#4$) in 
		arc[start angle={atan2(\y1,\x1)}, delta angle={atan2(\y2,\x2)-atan2(\y1,\x1)-360*(atan2(\y2,\x2)-atan2(\y1,\x1)>0)}, x radius=\qoffs, y radius=\qoffs] --
		cycle}
	
	\ifx\relax#6\relax
	\def\rlwidth{1pt}
	\else
	\def\rlwidth{#6}
	\fi
	
	\ifx\relax#8\relax
	\fill \nrhedge;
	\else
	\fill[#8]\nrhedge;
	\fi
	
	\ifx\relax#7\relax
	\draw[line width=\rlwidth,rounded corners=\qoffs]\nrhedge;
	\else
	\draw[line width=\rlwidth,#7]\nrhedge;
	\fi
}
\let\epsilon=\varepsilon
\let\eps=\epsilon
\let\rho=\varrho
\let\theta=\vartheta
\newcommand{\cF}{\mathcal{F}}
\newcommand{\cM}{\mathcal{M}}
\newcommand{\ccP}{\mathscr{P}}
\newtheoremstyle{note}  {4pt}  {4pt}  {\sl}  {}  {\bfseries}  {.}  {.5em}          {}
\newtheoremstyle{introthms}  {3pt}  {3pt}  {\itshape}  {}  {\bfseries}  {.}  {.5em}          {\thmnote{#3}}
\newtheoremstyle{remark}  {2pt}  {2pt}  {\rm}  {}  {\bfseries}  {.}  {.3em}          {}
\theoremstyle{plain}
\newtheorem{theorem}{Theorem}[section]
\newtheorem{lemma}[theorem]{Lemma}
\newtheorem{prop}[theorem]{Proposition}
\newtheorem{cor}[theorem]{Corollary}
\newtheorem{claim}[theorem]{Claim}
\theoremstyle{note}
\theoremstyle{remark}
\newtheorem{remark}[theorem]{Remark}
\newtheorem{exmpl}[theorem]{Example}
\newtheorem{question}[theorem]{Question}
\newtheorem{observation}[theorem]{Observation}
\newtheorem{problem}[theorem]{Problem}
\newcommand*\patchAmsMathEnvironmentForLineno[1]{
	\expandafter\let\csname old#1\expandafter\endcsname\csname #1\endcsname
	\expandafter\let\csname oldend#1\expandafter\endcsname\csname end#1\endcsname
	\renewenvironment{#1}
	{\linenomath\csname old#1\endcsname}
	{\csname oldend#1\endcsname\endlinenomath}}
\newcommand*\patchBothAmsMathEnvironmentsForLineno[1]{
	\patchAmsMathEnvironmentForLineno{#1}
	\patchAmsMathEnvironmentForLineno{#1*}}
\def\ex{\text{\rm ex}}
\newcommand{\overrighharpoonup}[1]{\ThisStyle{%
		\vbox {\m@th\ialign{##\crcr
				\rightharpoonupfill \crcr
				\noalign{\kern-\p@\nointerlineskip}
				$\hfil\SavedStyle#1\hfil$\crcr}}}}
\def\rightharpoonupfill{%
	$\SavedStyle\m@th\mkern+0.8mu\cleaders\hbox{$\shortbar\mkern-4mu$}\hfill\rightharpoonuptip\mkern+0.8mu$}
\def\rightharpoonuptip{%
	\raisebox{\z@}[2pt][1pt]{\scalebox{0.55}{$\SavedStyle\rightharpoonup$}}}
\def\shortbar{%
	\smash{\scalebox{0.55}{$\SavedStyle\relbar$}}}
\newcommand{\overlefharpoonup}[1]{\ThisStyle{%
		\vbox {\m@th\ialign{##\crcr
				\leftharpoonupfill \crcr
				\noalign{\kern-\p@\nointerlineskip}
				$\hfil\SavedStyle#1\hfil$\crcr}}}}
\def\leftharpoonupfill{%
	$\SavedStyle\m@th\mkern+0.8mu\cleaders\hbox{$\shortbar\mkern-4mu$}\hfill\leftharpoonuptip\mkern+0.8mu$}
\def\leftharpoonuptip{%
	\raisebox{\z@}[2pt][1pt]{\scalebox{0.55}{$\SavedStyle\leftharpoonup$}}}
\newsavebox\myboxA
\newsavebox\myboxB
\newlength\mylenA
\newcommand*\xoverline[2][0.75]{%
	\sbox{\myboxA}{$\m@th#2$}%
	\setbox\myboxB\null
	\ht\myboxB=\ht\myboxA%
	\dp\myboxB=\dp\myboxA%
	\wd\myboxB=#1\wd\myboxA
	\sbox\myboxB{$\m@th\overline{\copy\myboxB}$}
	\setlength\mylenA{\the\wd\myboxA}
	\addtolength\mylenA{-\the\wd\myboxB}%
	\ifdim\wd\myboxB<\wd\myboxA%
	\rlap{\hskip 0.5\mylenA\usebox\myboxB}{\usebox\myboxA}%
	\else
	\hskip -0.5\mylenA\rlap{\usebox\myboxA}{\hskip 0.5\mylenA\usebox\myboxB}%
	\fi}
\begin{document}
	
	\title[Simplicial Tur\'an problems]
	{Simplicial Tur\'an problems}
	
	\author[D.~Conlon]{David Conlon}
	\address{Department of Mathematics, California Institute of Technology, USA}
	\email{dconlon@caltech.edu}
    \thanks{\emph{Data availability statement.} There are no additional data beyond that contained within the main manuscript.}
	
	\author[S.~Piga]{Sim\'on Piga}
	\address{School of Mathematics, University of Birmingham, UK}
	\email{s.piga@bham.ac.uk}
    \thanks{The research leading to these results was supported by NSF Award DMS-2054452 (D.~Conlon) and EPSRC Grant No. EP/V002279/1 (S.~Piga).
    }
	
	\author[B.~Sch\"ulke]{Bjarne Sch\"ulke}
	\address{Department of Mathematics, California Institute of Technology, USA}
	\email{schuelke@caltech.edu}

	\subjclass[2020]{05C65, 05C35, 05D05, 05D99}
	\keywords{Tur\'an problems, hypergraphs, simplicial complexes}
	
	\begin{abstract}
        A simplicial complex~$H$ consists of a pair of sets~$(V,E)$ where~$V$ is a set of vertices and~$E\subseteq\mathscr{P}(V)$ is a collection of subsets of $V$ closed under taking subsets.
        Given a simplicial complex~$F$ and~$n\in \mathds N$, the extremal number~$\ex(n,F)$ is the maximum number of edges that a simplicial complex on~$n$ vertices can have without containing a copy of~$F$.
        We initiate the systematic study of extremal numbers in this context by asymptotically determining the extremal numbers of several natural simplicial complexes.
        In particular, we asymptotically determine the extremal number of a simplicial complex for which the extremal example has more than one incomplete layer.
	\end{abstract}
	
	\maketitle
	
	\section{Introduction}

        Tur\'an's paper~\cite{T:41} in which he showed that complete, balanced,~$(r-1)$-partite graphs maximise the number of edges among all~$K_r$-free graphs is often seen as one of the starting points of extremal combinatorics.
    Already in this paper, he asked for similar results for hypergraphs.
    More than $80$ years later, still little is known in this regard.
    
    A hypergraph~$H=(V,E)$ consists of a vertex set~$V$ and an edge set~$E\subseteq \ccP(V)=\{e\subseteq V\}$.
    We say that~$H$ is~$k$-uniform (or a~$k$-graph) if~$E\subseteq V^{(k)}=\{e\subseteq V:\vert e\vert=k\}$.
    The extremal number for~$n\in\mathds{N}$ and a~$k$-graph~$F$ is the maximum number of edges in an~$F$-free~$k$-graph on~$n$ vertices and is denoted by~$\ex^{(k)}(n,F)$.
    The Tur\'an density of~$F$ is then $\pi(F)=\lim_{n\to\infty}{\ex^{(k)}(n,F)}{\binom{n}{k}}^{-1}$ (this limit was shown to exist by monotonicity in~\cite{KNS:64}).
    The problem of determining the extremal numbers or the Tur\'an densities of hypergraphs is often referred to as the Tur\'an problem.
    For graphs, this problem is reasonably well understood, as a result of Erd\H{o}s, Stone, and Simonovits~\cites{ES:46,ES:66}, generalising Tur\'an's theorem, says that the Tur\'an density of any graph~$F$ is~$\frac{\chi(F)-2}{\chi(F)-1}$, where~$\chi(F)$ is the chromatic number of~$F$.
    
    In contrast, for hypergraphs, only a few Tur\'an densities are known, one prominent example being that of the Fano plane~\cites{DF:00,FS:05,KS:05,BR:19}.
    Even the Tur\'an density of the complete~$3$-graph on four vertices,~$K_4^{(3)}$, is still not known, although it is widely conjectured to be~$5/9$.
    In fact, the problem is already open for the~$3$-graph that consists of three edges on four vertices, denoted by~$K_4^{(3)-}$.
    For more on the hypergraph Tur\'an problem, we refer the interested reader to the survey by Keevash~\cite{K:11}.
    
    In this work, we consider the Tur\'an problem for (abstract) simplicial complexes.
    A hypergraph~$H=(V,E)$ is a simplicial complex if for all~$e\in E$ and~$e'\subseteq e$, we have~$e'\in E$.
    In other words, a simplicial complex is a hypergraph whose edge set is closed under taking subsets (we also say that~$E$ is hereditary). The~$k$-uniform layer of a simplicial complex~$H=(V,E)$ is~$H_k=(V,E^{(k)})$, where~$E^{(k)}=\{e\in E:\vert e\vert=k\}$.
    In line with the geometric interpretation of simplicial complexes, we define the dimension of~$H$ to be $$\dim(H)=\max\{k\geq 0:E^{(k)}\neq\emptyset\}-1\,.$$
    For brevity, when describing the edge set of a simplicial complex, we will generally not mention the induced edges, i.e., the edges which are properly contained in another edge.
    
    The extremal number for~$n\in \mathds{N}$ and a simplicial complex~$F$ is the maximum number of edges in an~$F$-free simplicial complex on~$n$ vertices and is denoted by~$\ex(n,F)$.
    For a family of simplicial complexes~$\mathcal F$, we define~$\ex(n,\mathcal F)$ analogously. We are by no means the first to study the extremal problem for simplicial complexes. For example, already in 1983,
    Frankl~\cite{F:83} observed that $\ex(n,F)=\sum_{i=0}^{s-1}\binom{n}{i}$ for $F=\big([s],\ccP([s])\big)$ and derived the celebrated Sauer--Shelah--Vapnik--Chervonenkis lemma~\cite{S:72,Sh:72,VC:68} as a corollary. However, we believe that we are the first to study the problem systematically, asymptotically determining the extremal numbers of several natural simplicial complexes. 

    As a first attempt, one might hope for some way to reduce the Tur\'an problem for simplicial complexes to that for uniform hypergraphs.
    Indeed, given a simplicial complex~$F$ of dimension~$k-1$, let $H$ be the simplicial complex with~$|V(H)|=n$, $E^{(i)}=V(H)^{(i)}$ for~$0\leq i\leq k-1$ and $E^{(k)}$ the edge set of a~$k$-graph on~$V(H)$ that is $F_k$-free. Then $H$ is clearly $F$-free, yielding the following general lower bound.
    
    \medskip
    
    \begin{observation}\label{obs:lowerbound}
        If~$F$ is a simplicial complex of dimension~$k-1$, then
        \begin{align*}
            \ex(n,F)\geq \ex^{(k)}(n,F_k)+\sum_{i=0}^{k-1}\binom{n}{i}\,.
        \end{align*}
    \end{observation}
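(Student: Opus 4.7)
The plan is to verify that the construction already sketched in the paragraph preceding the statement indeed produces an $F$-free simplicial complex on $n$ vertices with the claimed number of edges. Concretely, I fix a vertex set $V$ of size $n$, let $G$ be an $F_k$-free $k$-graph on $V$ with $\ex^{(k)}(n,F_k)$ edges, and take
\begin{equation*}
H = \Bigl(V,\; \bigcup_{i=0}^{k-1} V^{(i)} \,\cup\, E(G)\Bigr).
\end{equation*}

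First I would check that $H$ is a simplicial complex. Any edge of size at most $k-1$ has all its subsets in $\bigcup_{i=0}^{k-1} V^{(i)}$ by construction, and every edge $e \in E(G)$ has size $k$, so its proper subsets lie in $\bigcup_{i=0}^{k-1} V^{(i)} \subseteq E(H)$. Hence $E(H)$ is hereditary. The edge count is immediate: $|E(H)| = \ex^{(k)}(n,F_k) + \sum_{i=0}^{k-1}\binom{n}{i}$.

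Next I would verify that $H$ is $F$-free. Suppose, for contradiction, that there is an embedding $\varphi\colon F \hookrightarrow H$ of simplicial complexes. Since $\dim(F)=k-1$, the top layer $F_k$ is nonempty, and $\varphi$ restricts to an embedding $F_k \hookrightarrow H_k$. But $H_k = G$ is $F_k$-free by the choice of $G$, a contradiction. Thus $H$ contains no copy of $F$, establishing the lower bound.

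There is no real obstacle here: the argument is purely definitional, relying on the fact that a simplicial-complex embedding of $F$ into $H$ automatically induces a $k$-graph embedding of $F_k$ into $H_k$, together with the observation that making all layers below the top complete preserves both the hereditary property and $F_k$-freeness of the top layer.
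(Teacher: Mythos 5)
Your proposal is correct and is essentially identical to the paper's argument: the paper also takes all edges of size at most $k-1$ together with an extremal $F_k$-free $k$-graph as the top layer, and notes that a copy of $F$ would force a copy of $F_k$ in that layer. Your write-up just spells out the hereditary check and the restriction-of-embedding step that the paper leaves implicit.
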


 \medskip

    We will show that this bound is asympotically optimal for several simplicial complexes. However, this is not always the case. To say more, we recall an old conjecture of Hajnal (originally formulated in terms of traces of sets rather than simplicial complexes). 
    Given positive integers $k$ and $d$ with~$k > d$, let~$\cF(k,d)$ be the family of all simplicial complexes with vertex set~$[k]$ and~$1+\sum_{i=0}^{d}\binom{k}{i}$ edges.
    The problem of determining~$\ex(n,\cF(k,d))$ can be seen as a variant of the well-known Brown--Erd\H{o}s--S\'os problem~\cite{SEB:73} for simplicial complexes.
    Hajnal's conjecture (see~\cite{B:72,FP:94}) is then that~$\ex(n,\cF(k,d))=\sum_{i=0}^{d}\binom{n}{i}$, in keeping with the lower bound in Observation~\ref{obs:lowerbound}. 
    Bollob\'as and Radcliffe~\cite{BR:95} proved this conjecture for~$k=4$ and~$d=2$.
    However, in full generality, it was disproved by Frankl~\cite{F:78} and later, in a very strong sense, by Bukh and Goaoc~\cite{BG:19}.
    In particular, from their result it follows that for every~$d\geq 2$ and every exponent~$\ell\in \mathds N$, there is a sufficiently large~$k$ such that~$\ex(n,\mathcal F(k,d)) = \Omega(n^\ell)$. 
    However, for~$k\geq 5$ and~$d\geq 2$, the problem of determining the asymptotics of~$\ex(n,\cF(k,d))$ remains wide open.
    Indeed, it seems to be significantly harder to determine the extremal number of a simplicial complex if it does not agree with the lower bound in Observation~\ref{obs:lowerbound}.
    
    Our main result, Theorem~\ref{thm:bowtie} below, breaks through this barrier, asymptotically determining the extremal number of a simplicial complex~$B$ for which the extremal number is not given by the bound in Observation~\ref{obs:lowerbound}.
    To the best of our knowledge, this is the first result of this type and shows that 
    the extremal behaviour of simplicial complexes can be genuinely different from that of the uniform hypergraph given by the largest layer.
	
	\begin{theorem}\label{thm:bowtie}
		For~$B=(\{v_1,\dots,v_5\},\{v_3v_4,v_1v_2v_3,v_1v_4v_5\})$, there is a constant~$C\in\mathds{N}$ such that 
		$$\ex(n,B)\leq\frac{9}{8}\binom{n}{2}+\frac{21}{16}n+C\,$$
		for every~$n\geq 5$.
	\end{theorem}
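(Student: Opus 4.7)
The plan is to reduce the bound to an estimate on $|H_2|+|H_3|$. First, I would argue that edges of $H$ of size at least $4$ contribute only $O(n)$ to the total edge count: if two distinct $4$-edges share at most two vertices, then among their $3$-subsets one finds two $3$-edges sharing exactly one vertex, and since each $4$-edge forces its entire $2$-shadow as edges of $H$, the required $2$-edge of $B$ is automatically present. This forces the $4$-edges (and higher-dimensional edges) to cluster tightly, bounding their total size by $O(n)$, and reducing the problem to showing $|H_2|+|H_3|\le\tfrac{9}{8}\binom{n}{2}+O(n)$.

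The core of the argument is a link analysis. Set $G:=H_2$ and, for each vertex $v$, define the link $L_v:=\{\{a,b\}:\{v,a,b\}\in H_3\}$; note that $L_v$ is automatically a graph on $N_G(v)$ because $\{v,a,b\}\in H_3$ forces $\{v,a\},\{v,b\}\in G$. The $B$-free hypothesis is then the local statement: whenever $\{a,b\}$ and $\{c,d\}$ are vertex-disjoint edges of $L_v$, all four cross pairs $\{a,c\},\{a,d\},\{b,c\},\{b,d\}$ are non-edges of $G$. Writing $d_v=|N_G(v)|$ and $\nu_v$ for the matching number of $L_v$, a maximum matching in $L_v$ certifies $2\nu_v(\nu_v-1)$ distinct non-edges of $G$ inside $N_G(v)$, so
\[
|E(G[N_G(v)])|\;\le\;\binom{d_v}{2}-2\nu_v(\nu_v-1),
\]
while the Erd\H{o}s--Gallai theorem controls $|L_v|$ in terms of $d_v$ and $\nu_v$. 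Summing both inequalities over $v$, and using the identities $\sum_v|E(G[N_G(v)])|=3\tau(G)$ and $\sum_v|L_v|=3|H_3|$, where $\tau(G)$ is the triangle count of $G$, couples the size of $H_3$ to $\tau(G)$ and to the degree sequence of $G$.

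Combining these with the trivial bound $|G|\le\binom{n}{2}$ and the convex estimate $3\tau(G)\le\sum_v\binom{d_v}{2}$, I would then optimise the resulting system of inequalities over the link-matching parameters $\nu_v$. The trade-off—enlarging $|H_3|$ demands larger $\nu_v$, but each extra unit of $\nu_v$ forces quadratically many non-edges in $G$—should produce the constant $\tfrac{9}{8}$ as the exact value of the one-parameter optimum, while $\tfrac{21}{16}n+C$ absorbs the $O(n)$ correction from the layer reduction, from vertices of atypical degree, and from a base case handled by induction on $n$.

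The principal obstacle is pinning down the sharp constant $\tfrac{9}{8}$: a direct summation double-counts forbidden pairs, since the same non-edge of $G$ may be a cross pair at several apices, and this loses a constant factor. I anticipate overcoming this either by a stability argument—first identifying the approximate extremal structure and then quantifying small deviations—or by a carefully weighted version of the summation in which each forbidden pair is charged to a single distinguished apex, e.g.\ via a fractional cover of the auxiliary \emph{bowtie hypergraph} whose edges are the bowtie triples $(v,\{a,b\},\{c,d\})$.
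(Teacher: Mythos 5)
There is a genuine gap, and it sits at the heart of your plan rather than in the technical periphery. Your scheme bounds $|H_3|$ and the missing $2$-edges of $G=H_2$ purely through per-vertex quantities $(d_v,\nu_v)$: Erd\H{o}s--Gallai for $|L_v|$ and the $2\nu_v(\nu_v-1)$ forbidden cross pairs inside $N_G(v)$, summed over $v$. This system of inequalities is provably too weak to give $\tfrac98$. Consider the (fictitious) assignment in which $G$ is complete, every link is a star, so $d_v=n-1$, $\nu_v=1$ and $|L_v|=n-2$ for all $v$: every one of your constraints is satisfied with equality or slack (no forbidden pairs are certified because $\nu_v=1$, and Erd\H{o}s--Gallai is tight for stars), yet it corresponds to $|G|+|H_3|\approx\binom n2+\tfrac{n(n-2)}3=\tfrac53\binom n2$. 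So no optimisation over the parameters $\nu_v$, and no weighting or charging scheme for the forbidden pairs (there are none here), can push the bound below $\tfrac53\binom n2$, let alone to $\tfrac98\binom n2$. The reason this configuration is not actually realisable is a cross-link phenomenon your framework cannot see: if every link is a star, then each $3$-edge $xyz$ forces the star centres $c(x),c(y),c(z)$ to lie inside $xyz$, and a short argument shows $|H_3|=O(n)$ in that case. The paper's proof is built precisely around such interactions between different links: it introduces $\rho(v)=e^{(2)}(L_v)+\varphi(v)-2$ and proves that a vertex $w$ lying in an isolated edge of $L_v$, or appearing as a tip of a star in $L_v$, must itself be missing at least roughly $\rho$ many $2$-edges (Claims~\ref{cl:rhoup} and~\ref{cl:rhodown}); iterating these implications, together with a minimum-degree assumption obtained by induction and two symmetrisation steps, pins the structure to that of Example~\ref{ex:extremalbowtie}. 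Some ingredient of this kind, coupling $|L_v|$ at one vertex to missing $2$-edges at \emph{other} vertices, is indispensable and is absent from your proposal.

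A secondary issue: your opening reduction is incorrect as stated. Two $4$-edges sharing exactly two (or three) vertices do force a copy of $B$ inside their downward closure, but two $4$-edges sharing at most one vertex do not (e.g.\ $abcd$ and $aefg$: any two $3$-subsets meeting in exactly one vertex meet in $a$, and no cross pair between $\{b,c,d\}$ and $\{e,f,g\}$ lies in the shadow). So "tight clustering, hence $O(n)$ edges of size $\ge4$" needs an actual argument; the paper instead first secures a minimum degree of about $\tfrac98 n$ by induction and then shows (Claim~\ref{cl:nolargeedges}) that under this degree condition there are no $4$-edges at all. This part is likely repairable, but the aggregation gap described above is not repairable within the proposed framework.
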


    As shown by the following example, this bound is best possible up to the constant.
    
    \medskip
    
	\begin{exmpl}\label{ex:extremalbowtie}
		Let~$n\in\mathds{N}$ with~$n\equiv 6 \mod{8}$, let~$V=V_1\dcup V_2$ for some sets~$V_1$ and~$V_2$ with~$\vert V_2\vert$ even and~$\vert V_1\vert+\vert V_2\vert=n$, and let~$\cM$ be a perfect~$2$-uniform matching on~$V_2$.
        Furthermore, let
        \begin{align*}
            E'=\{abc:ab\in\cM\text{ and }c\in V_1\}\cup V_1^{(2)}
        \end{align*}
        and let~$E$ be the downward closure of~$E'$.
        Then set~$H=(V,E)$ and note that~$B\not\subseteq H$ and
        \begin{align*}
            \vert E\vert=&\frac{n-\vert V_1\vert}{2}\vert V_1\vert+(n-\vert V_1\vert)\vert V_1\vert+\binom{\vert V_1\vert}{2}+\frac{n-\vert V_1\vert}{2}+n+1\,.
        \end{align*}
        The right-hand side is maximised if~$\vert V_1\vert=\frac{3}{4}n-\frac{1}{2}$, in which case it is equal to~$\frac{9}{8}\binom{n}{2}+\frac{21}{16}n+\frac{5}{4}$.
	\end{exmpl}

	\medskip

    The proof of Theorem~\ref{thm:bowtie} is the longest and most difficult part of this paper.
    We believe that the approach used for this proof might have further applications for hypergraph Tur\'an-type problems.
    Roughly speaking, it proceeds by gaining more and more structure on an extremal~$B$-free simplicial complex, ultimately yielding the same structure as Example~\ref{ex:extremalbowtie}.
    This is done by introducing and analysing a parameter~$\rho$, which is defined for every vertex and depends on its link, and symmetrising a subset of the vertices determined by~$\rho$.
    
    We now state our results where, up to lower order terms, the extremal number is given by the lower bound in Observation~\ref{obs:lowerbound}.
    We begin with the comparatively simple case of two disjoint $3$-edges connected by a $2$-edge.
    
	
	\begin{theorem}\label{thm:match}
	    For the simplicial complex~$F=(\{v_1,\dots,v_6\},\{v_1v_2v_3,v_4v_5v_6,v_1v_4\})$, there is~$C\in \mathds N$ such that~$n^2-n+2 \le \ex(n,F) \le n^2-n+C$.
	\end{theorem}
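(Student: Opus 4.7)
\textit{Proof plan.} The lower bound is immediate from Observation~\ref{obs:lowerbound}: the top layer $F_3$ of $F$ is a $3$-uniform matching of size~$2$, so by the Erd\H{o}s--Ko--Rado theorem $\ex^{(3)}(n,F_3)=\binom{n-1}{2}$ for $n\ge 6$ (realized by the star through a fixed vertex), giving $\ex(n,F)\ge\binom{n-1}{2}+\binom{n}{2}+n+1=n^2-n+2$.

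For the upper bound, let $H=(V,E)$ be an $F$-free simplicial complex on~$n$ vertices. The key structural observation is: \emph{if $t_1,t_2\in E^{(3)}$ are disjoint, then no $2$-edge of~$H$ has one endpoint in each} (otherwise those three edges realize a copy of~$F$). I would analyze~$H$ through the matching number $\nu=\nu(H_3)$. Fix a maximum matching $\{t_1,\dots,t_\nu\}$. Applying the key observation pairwise shows every $3$-edge meets at most one $t_i$ (a $3$-edge hitting both $t_i$ and $t_j$ would carry a $2$-face between them), while maximality forces it to meet at least one. This yields a partition $E^{(3)}=\bigsqcup_{i=1}^{\nu} P_i$ with $P_i\subseteq\binom{V_i}{3}$ and $V_i:=V\setminus\bigcup_{j\ne i}t_j$. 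Each~$P_i$ is intersecting (else a disjoint pair inside~$P_i$ together with the other $t_j$'s would extend the matching), so Erd\H{o}s--Ko--Rado yields $|P_i|\le\binom{|V_i|-1}{2}=\binom{n-3\nu+2}{2}$.

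When $\nu=1$, this gives $|E^{(3)}|\le\binom{n-1}{2}$, and hence $|E(H)|\le n^2-n+2+|E^{(\ge 4)}|$; moreover, in the extremal star subcase no $4$-edges can occur, since the four $3$-faces of a $4$-edge cannot all share a common vertex. When $\nu\ge 2$, the bound on each $|P_i|$ alone is insufficient, and one must exploit the forced missing $2$-edges. For instance, if $P_1$ is a full star through some $v_1\in t_1$ of maximal size $\binom{n-4}{2}$, then every pair $\{x,y\}\subseteq V_1\setminus v_1$ sits in~$E^{(2)}$ as a $2$-face of a $3$-edge of~$P_1$. Any $s\in P_2$ disjoint from some $\{v_1,x,y\}\in P_1$ would force nine cross $2$-edges to be missing, some of which lie in $\binom{V_1\setminus v_1}{2}\subseteq E^{(2)}$---a contradiction. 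A short argument then forces $P_2=\{t_2\}$ and that every pair between $t_2$ and $V\setminus t_2$ is driven out of $E^{(2)}$, giving $|E^{(2)}|+|E^{(3)}|=(n-4)^2+4$, well below the $\nu=1$ extremum. The remaining subcases of $\nu\ge 2$ (some $P_i$ a star through a vertex outside $t_i$, or without a common vertex and governed by Hilton--Milner) are handled by analogous counting.

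Finally, since $F$ is supported on six vertices and is downward closed, a $6$-edge of~$H$ would contain~$F$, so $\dim(H)\le 4$. A $4$-edge $e$ further forces every $3$-edge $t$ to satisfy $|t\cap e|\in\{0,2,3\}$: if $t\cap e=\{a\}$ then the $3$-face $e\setminus\{a\}$ is disjoint from~$t$ while $\{a,b\}\subseteq e$ (for any $b\in e\setminus\{a\}$) connects them, violating the key observation. This rigidity ensures that $|E^{(\ge 4)}|$ is absorbed by the slack in $|E^{(\le 3)}|$ in every case, yielding $|E(H)|\le n^2-n+C$ for a suitable constant~$C$. The hardest step is the $\nu\ge 2$ analysis: one must carefully track which cross pairs are forced missing, avoid double-counting when the supports $W_i=\bigcup_{t\in P_i}t$ overlap, and verify in every subcase that the loss in $|E^{(2)}|$ strictly dominates any gain in $|E^{(3)}|$ from having multiple parts in the partition.
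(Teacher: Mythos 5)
Your lower bound is fine and matches the paper's construction. The structural observations at the start of your upper bound are also correct: no $2$-edge joins two disjoint $3$-edges, every $3$-edge meets exactly one triple of a maximum matching $t_1,\dots,t_\nu$ of $H_3$, and each part $P_i$ of the resulting partition is intersecting. But there is a genuine gap at exactly the step you label the hardest. For $\nu\ge 2$ the only case you actually argue is the one where $P_1$ is a \emph{full} star through $v_1\in t_1$; the mechanism you use there (a disjoint $s\in P_2$ forces a missing cross pair that lies in $\binom{V_1\setminus v_1}{2}$, which is entirely contained in $E^{(2)}$) is unavailable for a general intersecting $P_1$, since then you have no guarantee that the forced-missing pairs were ever present. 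Meanwhile the naive bound $\sum_i|P_i|\le \nu\binom{n-3\nu+2}{2}$ overshoots $\binom{n-1}{2}$ already for $\nu=2$, so some replacement argument is mandatory, and ``analogous counting'' does not supply it: one needs, e.g., the observations that the supports of distinct $P_i$ outside $t_1\cup\dots\cup t_\nu$ are pairwise disjoint (a common vertex $x$ of members of $P_i$ and $P_j$ would give a $2$-edge from $x$ into $t_j$, which is forbidden since $x$ lies in a $3$-edge disjoint from $t_j$), plus EKR applied cluster-by-cluster together with a convexity/superadditivity step, plus a count of forced non-edges when $\nu$ is large. None of this is in your write-up, and without it the $\nu\ge2$ case is unproven. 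The same applies to $|E^{(\ge4)}|$: your claim that it is ``absorbed by the slack in every case'' is asserted rather than shown; note that $F$-freeness only forces any two $4$-edges to share at least three vertices, so there can be linearly many of them (all $4$-sets through a fixed triple), and ruling out $4$-edges only ``in the extremal star subcase'' of $\nu=1$ does not give the additive-constant bound $n^2-n+C$; for $\nu=1$ and $E^{(3)}$ not a star you would want Hilton--Milner, which you mention only in passing for a different case.

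For comparison, the paper's proof avoids this case analysis entirely: it inducts on $n$, first securing minimum degree about $2n-2$, then showing no two $4$-edges intersect, then that $d^{(3)}(v)\le 3n$ for every $v$ (any $3$-edge through $v$ must meet a fixed $3$-edge avoiding $v$), and finally deleting the whole neighbourhood $N(v)$ of a vertex: only $O(n)$ $3$-edges meet $N(v)$, so the edge count of $H-N(v)$ still beats the threshold on $(1-\alpha)n$ vertices and induction applies. If you want to salvage your route, the disjoint-support and forced-non-edge counting sketched above is the missing core; as written, however, the proposal does not prove the upper bound.
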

	
	Given a~$3$-graph~$F$ and~$A\subseteq V(F)$, let~$F^+$ be the simplicial complex obtained from~$F$ by adding a new vertex~$a$ and all~$2$-edges from~$a$ to~$A$ (and then taking the downward closure).
    Our next theorem follows from a more general result, Theorem~\ref{thm:2connvtxgeneral-2}, that we prove in Section~\ref{sec:gen}.
    It gives a general condition under which the extremal number of~$F^+$ is approximately equal to the bound in Observation~\ref{obs:lowerbound} provided we also require that the host simplicial complex is~$2$-dimensional.
    Let us call a~$3$-graph \emph{proper} if for every~$\varepsilon>0$, there is some~$n_0\in\mathds{N}$ such that for every~$n\geq n_0$, we have
    $$(1+\varepsilon)\left[\ex^{(3)}(n,F)-\ex^{(3)}(n-1,F)\right]\geq\frac{\ex^{(3)}(n,F)}{n}\,.$$
    For instance, if~$\ex^{(3)}(n,F)\approx Cn^\alpha$ for some~$C>0$ and~$\alpha\geq 1$, then $F$ is proper.
    Moreover, by duplicating a vertex that has at least the average degree in an extremal example for~$F$ on~$n-1$ vertices (combined with a standard double counting argument from~\cite{KNS:64}), one can easily see that if the shadow of~$F$ is complete, then~$F$ is proper.
    
    \begin{theorem}\label{thm:2connvtxgeneral}
        For every proper~$3$-graph~$F$,~$A\subseteq V(F)$ and~$\eta,\varepsilon>0$, there is~$n_0 \in \mathbb{N}$ such that the following holds.
	    If a~$2$-dimensional simplicial complex~$H$ on~$n\geq n_0$ vertices satisfies
	    \begin{align}\label{eq:condonmaxdeg1}
	        \frac{\ex^{(3)}(n,F)}{n^2}+1\geq\Big(\frac{(\vert A\vert-1)}{\vert A\vert}+\eta\Big)\big[1+\frac{\Delta^{(3)}}{2}\big]\,,
	   \end{align}
	   where~$\Delta^{(3)}=\max_{xy\in V(H)^{(2)}}d^{(3)}(xy)$,  and~$e(H)\geq(1+\varepsilon)\big[\ex^{(3)}(n,F)+\binom{n}{2}\big]$, then~$H$ contains a copy of~$F^+$.
	\end{theorem}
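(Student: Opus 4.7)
My plan is to find a vertex $a\in V(H)$ together with a copy of $F$ in $H_3$ disjoint from $a$ whose $A$-part lies in the 2-neighbourhood $N_2(a)$; the 2-edges from $a$ to this $A$-part then combine with the copy of $F$ to yield $F^+\subseteq H$ via the downward closure. The argument proceeds by supersaturation and averaging.

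The first step is to establish that $e^{(2)}(H)\geq\beta\binom{n}{2}$ for some $\beta>(|A|-1)/|A|$. Since $H$ is a simplicial complex, each $3$-edge contains three $2$-edges, so $e^{(3)}(H)\leq\tfrac{1}{3}e^{(2)}(H)\Delta^{(3)}$. Combined with the bound $\Delta^{(3)}\leq\tfrac{2}{\alpha}[\ex^{(3)}(n,F)/n^2+1]-2$ from~\eqref{eq:condonmaxdeg1} (writing $\alpha=(|A|-1)/|A|+\eta$) and the edge-count hypothesis, a short calculation yields $e^{(2)}(H)\geq\tfrac{3\alpha(1+\varepsilon)}{\alpha+2}\binom{n}{2}(1-o(1))$, which one checks exceeds $((|A|-1)/|A|)\binom{n}{2}$ by a positive constant. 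Moreover, the same calculation shows $e^{(3)}(H)\geq\ex^{(3)}(n,F)+\Omega(n^2)$, so the propriety of $F$ combined with supersaturation yields at least $cn^{|V(F)|}$ copies of $F$ in $H_3$ for some $c>0$.

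Now, for each copy $F'\subseteq H_3$, call $a\in V(H)\setminus V(F')$ an \emph{extender} for $F'$ if $\{a,v\}\in H_2$ for every $v\in A'$ (the image of $A$); any extender produces the desired copy of $F^+\subseteq H$. By the union bound $|\bigcap_{v\in A'}N_2(v)|\geq n-\sum_{v\in A'}|\bar N_2(v)|$, one obtains
\[
\#\bigl\{(F',a):a\text{ extends }F'\bigr\}\geq(\text{\# copies})(n-|V(F)|)-\sum_{u\in V(H)}c_A(u)|\bar N_2(u)|,
\]
where $c_A(u):=|\{F':u\in A'\}|$ and $\sum_u c_A(u)=|A|(\text{\# copies})$. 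The error term is controlled via $\sum_u|\bar N_2(u)|=2(\binom{n}{2}-e^{(2)}(H))\leq(1-\beta)n^2$ from the first step, together with a uniform upper bound $c_A(u)\leq O((\text{\# copies})/n)$ coming from the max-pair-degree bound of~\eqref{eq:condonmaxdeg1} and the propriety of $F$ (which provides robust supersaturation surviving single-vertex deletion). This gives $\sum_u c_A(u)|\bar N_2(u)|\leq O((1-\beta)|A|n\cdot(\text{\# copies}))$, and since $\beta>(|A|-1)/|A|$ means $|A|(1-\beta)<1$, the count of extender-pairs is strictly positive, producing $F^+\subseteq H$.

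The main obstacle is the uniform upper bound on $c_A(u)$: a priori, copies of $F$ could concentrate at the few ``bad'' vertices with large $|\bar N_2(u)|$, causing the error term to dominate. Both the $\Delta^{(3)}$ bound of~\eqref{eq:condonmaxdeg1} (which keeps the number of $3$-edges through any vertex bounded) and the propriety of $F$ (which guarantees that deleting any small set of vertices from $H_3$ still leaves supersaturatedly many copies of $F$) are needed to rule out this concentration. Balancing the resulting constants against the slack $\eta$ and $\varepsilon$ in the hypothesis is the delicate technical ingredient.
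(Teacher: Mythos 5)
Your plan has two genuine gaps, and the first is fatal as stated. You claim that $e^{(3)}(H)\geq\ex^{(3)}(n,F)+\Omega(n^2)$ together with ``the propriety of $F$ combined with supersaturation'' yields $cn^{|V(F)|}$ copies of $F$ in $H_3$. Supersaturation gives $\Omega(n^{v(F)})$ copies only from an excess of order $\Omega(n^3)$ above the extremal number (i.e.\ density above the Tur\'an density); an excess of order $n^2$ gives nothing of the sort when $F$ is degenerate, e.g.\ when $\ex^{(3)}(n,F)=\Theta(n^2)$ or smaller, which is allowed here. Moreover, ``proper'' in this paper is only the growth condition $(1+\varepsilon)[\ex^{(3)}(n,F)-\ex^{(3)}(n-1,F)]\geq \ex^{(3)}(n,F)/n$; it supplies neither supersaturation nor the ``robust supersaturation surviving vertex deletion'' you invoke later. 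Second, even granting many copies, your averaging needs the copies to be spread over vertices essentially uniformly: the error term is at most $K(1-\beta)n\cdot(\#\text{copies})$ if $c_A(u)\leq K(\#\text{copies})/n$, and since the only slack is $|A|(1-\beta)<1$, you need $K$ to be essentially $|A|$, i.e.\ the exact average. A bound ``$c_A(u)=O((\#\text{copies})/n)$'' with an unspecified constant does not suffice, and neither the pair-degree bound $\Delta^{(3)}$ from~\eqref{eq:condonmaxdeg1} nor properness prevents copies of $F$ from concentrating on the few vertices with large $|\bar N_2(u)|$. You flag this as ``the delicate technical ingredient,'' but it is precisely the missing proof, and it is not clear it can be supplied under the stated hypotheses.

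For comparison, the paper's proof needs only \emph{one} copy of $F$ (which exists since, with no $4$-edges, $e^{(3)}(H)>\ex^{(3)}(n,F)$) and no counting of copies at all. It first runs an induction on $n$, deleting any vertex of degree at most roughly $\frac{\ex^{(3)}(n,F)}{n}+n$ (properness is used exactly here, to show the edge-count hypothesis survives the deletion, together with a Katona--Nemetz--Simonovits double count to preserve~\eqref{eq:condonmaxdeg1}), which yields a minimum degree condition. Then, if the fixed copy of $F$ has no extending apex, pigeonhole gives a vertex $a\in A$ with $d^{(2)}(a)\leq n-1-\frac{n-v(F)}{|A|}$, and since $H$ is hereditary and $2$-dimensional, $d(a)\leq d^{(2)}(a)\big[1+\tfrac{\Delta^{(3)}}{2}\big]$; combining this with the minimum degree bound contradicts~\eqref{eq:condonmaxdeg1}. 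If you want to salvage your route, you would have to replace the global copy-count by this kind of single-copy, minimum-degree argument, since uniform distribution of copies is exactly what you cannot guarantee.
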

	
	The following corollary of Theorem~\ref{thm:2connvtxgeneral-2} (using that the extremal number of the~$3$-graph with two edges on four vertices has been determined in~\cite{FF:87}) asymptotically determines the extremal number of another natural simplicial complex on five vertices.
	
	\begin{cor}\label{cor:2connvtx}
	    For the simplicial complex~$F=(\{v_1,\dots,v_5\},\{v_1v_2v_3,v_2v_3v_4,v_2v_5,v_3v_5\})$ and~$\varepsilon>0$, there is~$n_0 \in \mathbb{N}$ such that~$\ex(n,F)=(\frac{4}{3}\pm\varepsilon)\binom{n}{2}$ for every~$n\geq n_0$.
	\end{cor}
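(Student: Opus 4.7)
The plan is as follows. Writing $F = F_3^+$ where $F_3$ is the $3$-graph on $\{v_1,v_2,v_3,v_4\}$ with edges $v_1v_2v_3$ and $v_2v_3v_4$, and $A=\{v_2,v_3\}\subseteq V(F_3)$ with $|A|=2$, note that a $3$-graph is $F_3$-free precisely when every pair of vertices lies in at most one edge, i.e., when it is a partial Steiner triple system. A standard double count (with the exact value determined by Frankl--F\"uredi in~\cite{FF:87}) gives $\ex^{(3)}(n,F_3)=\tfrac13\binom{n}{2}+O(n)$, and combining this with Observation~\ref{obs:lowerbound} yields the lower bound $\ex(n,F)\geq\tfrac13\binom{n}{2}+\binom{n}{2}+n+1\geq\bigl(\tfrac{4}{3}-\varepsilon\bigr)\binom{n}{2}$ for $n$ sufficiently large.

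For the upper bound I would apply Theorem~\ref{thm:2connvtxgeneral-2} (the general form of Theorem~\ref{thm:2connvtxgeneral}) to this $F_3$ and $A$. Suppose, towards a contradiction, that $H$ is an $F$-free simplicial complex on $n$ vertices with $e(H)\geq\bigl(\tfrac{4}{3}+\varepsilon\bigr)\binom{n}{2}$; the four hypotheses of the theorem are then checked as follows. First, $F_3$ is proper by the criterion preceding Theorem~\ref{thm:2connvtxgeneral}, since $\ex^{(3)}(n,F_3)\sim\tfrac{n^2}{6}$ grows polynomially of degree $2$. Second, $\ex^{(3)}(n,F_3)+\binom{n}{2}=\tfrac{4}{3}\binom{n}{2}+O(n)$, so the assumed lower bound on $e(H)$ gives $e(H)\geq(1+\varepsilon')\bigl[\ex^{(3)}(n,F_3)+\binom{n}{2}\bigr]$ for some $\varepsilon'>0$ and $n$ large. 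Third, I claim $\Delta^{(3)}(H)\leq 2$: if instead some pair $xy$ lies in three distinct triples $xyz_1,xyz_2,xyz_3\in E(H)$, then by the downward closure of $H$ the $2$-edges $xz_3$ and $yz_3$ belong to $E(H)$, and the assignment $(v_1,v_2,v_3,v_4,v_5)=(z_1,x,y,z_2,z_3)$ embeds $F$ into $H$, a contradiction. Fourth, with $\Delta^{(3)}(H)\leq 2$ and $|A|=2$ the right-hand side of \eqref{eq:condonmaxdeg1} is at most $(\tfrac{1}{2}+\eta)\cdot 2=1+2\eta$, while its left-hand side tends to $\tfrac{7}{6}$, so any $\eta<\tfrac{1}{12}$ suffices for $n$ large. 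Theorem~\ref{thm:2connvtxgeneral-2} then produces $F\subseteq H$, the desired contradiction.

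The principal step in this verification is the bound $\Delta^{(3)}(H)\leq 2$, which is precisely where the simplicial (downward-closed) structure of $H$ is used essentially, since it supplies the $2$-edges $xz_3$ and $yz_3$ needed to attach $v_5$ for free. A secondary subtlety, since Theorem~\ref{thm:2connvtxgeneral} is formulated only for $2$-dimensional host complexes, is to handle edges of size at least $4$; this is either absorbed into Theorem~\ref{thm:2connvtxgeneral-2} directly or handled by passing to the $2$-dimensional truncation of $H$ (still $F$-free, as $\dim F=2$) and bounding the discarded edges by $o\bigl(\binom{n}{2}\bigr)$. For the latter, the bound $\Delta^{(3)}\leq 2$ is again used: it forbids $k$-edges with $k\geq 5$ and forces any two $4$-edges of $H$ to share at most one vertex, while for any $4$-edge $\{a,b,c,d\}\in E(H)$ the outside $2$-neighbourhoods of $a,b,c,d$ must be pairwise disjoint (otherwise a common outside neighbour of two vertices of the $4$-edge completes a copy of $F$), yielding the required bound.
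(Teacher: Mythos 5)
Your overall route is the paper's: the lower bound via Observation~\ref{obs:lowerbound} together with $\ex^{(3)}(n,F_3)=\tfrac{n^2}{6}+O(n)$ from~\cite{FF:87}, and the upper bound by applying Theorem~\ref{thm:2connvtxgeneral-2} to $F_3$ (two triples on four vertices) with $A=\{v_2,v_3\}$; your observation that $F$-freeness forces $\Delta^{(3)}\le 2$ is correct and is indeed the key point. The gap lies in the final hypothesis check. Since the host $H$ may contain $4$-edges, you must invoke Theorem~\ref{thm:2connvtxgeneral-2} through its second alternative (which applies because $F_3$ spans only four vertices), and its hypothesis is \eqref{eq:condonmaxdeg}, whose bracket is $1+\tfrac{\Delta^{(3)}}{2}+\tfrac{\Delta^{(4)}}{3}+\cdots$, not the truncated $1+\tfrac{\Delta^{(3)}}{2}$ of \eqref{eq:condonmaxdeg1} that you actually verified (that inequality only feeds Theorem~\ref{thm:2connvtxgeneral}, which assumes $H$ is $2$-dimensional). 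From $\Delta^{(3)}\le2$ one does get, as you note, that two $4$-edges share at most one vertex, i.e.\ $\Delta^{(4)}\le1$, and that no edge has size at least $5$; but then the right-hand side of \eqref{eq:condonmaxdeg} can be as large as $\bigl(\tfrac12+\tfrac1n\bigr)\tfrac73=\tfrac76+\tfrac{7}{3n}$, so the comfortable margin you claim ($1+2\eta$ against a left-hand side tending to $\tfrac76$) evaporates: both sides have the same leading constant $\tfrac76$, and the inequality survives only because of the $1/n$-order terms built into \eqref{eq:condonmaxdeg} (namely $e^3n^n/(n+3)^n\ge 1+\tfrac{9}{2n}-O(n^{-2})$ and the $+n^2$ versus the $\tfrac1n\tfrac{v(F)-|A|}{|A|}$ correction), combined with $\ex^{(3)}(n,F_3)\ge\tfrac{n(n-1)}{6}-O(n)$. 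That finer computation is exactly what is needed and is missing; it does go through, so the corollary is safe, but your stated margin argument does not establish it.

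Your fallback of truncating $H$ to its $2$-dimensional part does not close the hole either. From ``any two $4$-edges share at most one vertex'' you only get at most $\binom n2/6$ $4$-edges, and the pairwise-disjoint-outside-neighbourhood property improves this only by a constant factor (a short double count still allows a positive fraction of $\binom n2$ many $4$-edges, which can exceed $\varepsilon\binom n2$). What you would need is that the number of $4$-edges is $o(n^2)$, and that is a Ruzsa--Szemer\'edi-type statement: an edge-disjoint union of $K_4$'s in which every triangle lies inside a block (which is precisely the structure your disjointness condition permits) can have $n^{2-o(1)}$ blocks, so ``yielding the required bound'' is not available by elementary counting. The cleanest repair is your first option: stay with Theorem~\ref{thm:2connvtxgeneral-2}, record $\Delta^{(3)}\le2$, $\Delta^{(4)}\le1$, $\Delta^{(i)}=0$ for $i\ge5$, and verify \eqref{eq:condonmaxdeg} with the $O(1/n)$ margin indicated above.
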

	
    Finally, we determine the asymptotics for the extremal number of the simplicial complex on four vertices given by two~$3$-edges that intersect in two vertices and a~$2$-edge between the two vertices which do not lie in the intersection.
    
	\begin{theorem}\label{thm:Bollobas}
	For the simplicial complex~$F=(\{v_1,v_2,v_3,v_4\},\{v_1v_2v_3,v_2v_3v_4,v_1v_4\})$ and~$\varepsilon>0$, there is~$n_0 \in \mathbb{N}$ such that~$ex(n,F)=\big(\frac{2}{9}\pm\varepsilon\big)\binom{n}{3}$ for every~$n\geq n_0$.
	\end{theorem}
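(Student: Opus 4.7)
For the lower bound, take $V=V_1\cup V_2\cup V_3$ with parts of sizes as equal as possible, and let $H$ be the simplicial complex whose $3$-uniform layer consists of all triples with one vertex in each part, together with the downward closure. If $v_1v_2v_3$ and $v_2v_3v_4$ are both $3$-edges of $H$, then $v_2,v_3$ lie in two distinct parts and hence $v_1,v_4$ both lie in the third part; since $H_2$ consists only of pairs crossing two distinct parts, $v_1v_4\notin H_2$ and $H$ is $F$-free. This construction contains $\lfloor n/3\rfloor\lfloor n/3\rfloor\lceil n/3\rceil = \frac{2}{9}\binom{n}{3}(1+o(1))$ three-edges.

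For the upper bound, let $H$ be an $F$-free simplicial complex on $n$ vertices, let $H_3$ be its $3$-uniform layer and let $S$ be the shadow of $H_3$. Replacing $H_2$ by $S$ preserves $F$-freeness and decreases $e(H)$ by at most $O(n^2)=o(\binom{n}{3})$, so I may assume $H_2=S$ and focus on bounding $|H_3|$. The $F$-freeness is equivalent to
\begin{align*}
(\ast)\qquad\text{for every }xy\in S,\ L(xy):=\{z:xyz\in H_3\}\text{ is an independent set in }S.
\end{align*}
An immediate consequence is that each vertex link $H_v=\{xy:vxy\in H_3\}$ is triangle-free: a triangle $xyz$ in $H_v$ would place $y,z\in L(vx)$ while the edge $vyz\in H_3$ puts $yz\in S$, contradicting $(\ast)$.

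The plan is then to run a Zykov-style symmetrization jointly on $(S,H_3)$. Given vertices $u,v$ with $uv\notin S$ and at least as many $3$-edges through $u$ as through $v$, I replace $v$ by a twin of $u$: reset $v$'s $S$-neighborhood to equal $u$'s, and replace each $3$-edge $uxy\in H_3$ by $vxy$. A case analysis depending on whether $u$ lies in each pair-link $L(xy)$ shows that this step does not decrease $|H_3|$ and preserves $(\ast)$; the crucial point is that if $v$ is inserted into a new pair-link $L'(xy)$, the equivalence $vz\in S'\iff uz\in S$ together with $S$-independence of the original $L(xy)\ni u$ forces $vz\notin S'$ for every $z\in L'(xy)\setminus\{v\}$.

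Iterating, I reduce to the case where $S$ is a complete multipartite graph $K_{n_1,\dots,n_k}$ and all vertices in a common part are $H_3$-twins. Then $H_3$ is determined by a partial map $m\colon\binom{[k]}{2}\to[k]$ assigning to each pair of parts the ``third part'' housing the pair-link, and $(\ast)$ combined with the fact that $S$ equals the shadow of $H_3$ forces $m$ to be totally defined and to correspond to a partial Steiner triple system on $[k]$. A direct computation then shows that $|H_3|=\sum_{\{i,j,l\}\text{ consistent}}n_in_jn_l$ is maximised at $k=3$ with equal parts, giving $|H_3|\leq\lfloor n/3\rfloor\lfloor n/3\rfloor\lceil n/3\rceil = \frac{2}{9}\binom{n}{3}(1+o(1))$. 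The main obstacle is the case analysis establishing that the symmetrization preserves $(\ast)$; once this is in place, the multipartite classification and final optimisation are routine.
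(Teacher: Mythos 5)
Your lower bound is the same tripartite construction as the paper's and is fine. Your upper bound, however, takes a genuinely different and much heavier route than the paper. The paper's proof is three lines: any $4$-edge already contains a copy of $F$ in its downward closure, so an $F$-free complex is $2$-dimensional; $F$ is contained in the downward closure of the $3$-graph $F_3=\{v_1v_2v_3,v_2v_3v_4,v_1v_4v_5\}$ (the pair $v_1v_4$ comes for free from the third triple); and Bollob\'as (1974) already determined $\ex^{(3)}(n,F_3)=\big(\tfrac29+o(1)\big)\binom n3$. You instead set out to reprove this cancellative-type bound from scratch by symmetrisation. Your cloning step itself is sound, and your ``crucial point'' is correct: since every pair among the four vertices of $F$ lies in an edge of $F$, two $S$-nonadjacent vertices cannot both occur in a copy, so any new copy through the clone pulls back to the original complex.

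But as written the argument has genuine gaps. First, you implicitly assume $e(H)\approx e^{(2)}(H)+e^{(3)}(H)$; you must first dispose of edges of size at least $4$ (one line, as above), otherwise ``focus on bounding $|H_3|$'' is unjustified. Second, ``Iterating, I reduce to\dots'' conceals real work: a single cloning step does not obviously terminate in a configuration where $S$-nonadjacency is an equivalence relation \emph{and} all vertices of a part are $H_3$-twins. One needs the usual extremal-counterexample argument with a strict-increase potential (for $xz\in S$, $xy,yz\notin S$, cloning both $x$ and $z$ from $y$ gains at least $d^{(3)}(xz)$ edges, which is only strictly positive if you maintain $S=\partial H_3$ --- an invariant your operation can destroy), plus a separate within-part symmetrisation to obtain the twin property; and the well-definedness of your map $m$ on pairs of parts genuinely requires the twin property, since complete multipartiteness of $S$ alone allows different pairs from $P_i\times P_j$ to have links in different parts. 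Third, and most importantly, the last step is not ``a direct computation'': after the reduction you must show $\sum_{\{i,j,l\}\in T}n_in_jn_l\le (n/3)^3$ for every partial Steiner triple system $T$, i.e.\ that the Lagrangian of a linear triple system is at most $1/27$. The naive estimates (each vertex link is a matching, contributing at most $(n-n_i)^2/4$) are far too lossy, and shifting weight between uncovered pairs only reduces to the case of a full Steiner triple system, where a further argument is still required. This lemma is true, but it is of essentially the same depth as the theorem of Bollob\'as that the paper simply cites, so your proposal both leaves its hardest step unproved and is considerably longer than necessary.
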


	\subsection*{Notation}\label{sec:prelim}
		For a simplicial complex~$H=(V,E)$,~$S\subseteq V$,  and~$i\in\mathds{N}$, we define the~$i$-uniform degree of~$S$ by~$d^{(i)}(S)=\vert\{e\in E:S\subseteq e,\vert e\vert=i\}\vert$.
		Omitting parentheses around~$1$- and~$2$-sets, we have, in particular, that~$d^{(i)}(v)=\vert\{e\in E:v\in e,\vert e\vert=i\}\vert$ for $v \in V$.
		We also write~$d^{(\geq i)}(v)=\sum_{j\geq i}d^{(j)}(v)$.
        The degree of~$v$ counts all edges containing~$v$ except the singleton, that is, $d(v)=\sum_{i\in\mathds{N},i\geq 2}d^{(i)}(v)$.
		The neighbourhood of~$v$ is~$N(v)=\{w\in V\setminus v:\exists e\in E:v,w\in e\}$ and, more generally, the neighbourhood of a set~$S$ is~$N(S)=\{w\in V\setminus S:\exists e\in E:S\subseteq e,w\in e\}$.
        When considering simplicial complexes and the context is clear, we will usually only mention those edges whose downward closure is the edge set.
        For a vertex~$v\in V$, let~$L_v$ be the link of~$v$ in~$H$, i.e., the simplicial complex with vertex set~$V-v$ and edge set~$\{e-v:v\in e\in E\}$.
        Finally, we write~$E^{(i)}(H)=\{e\in E\colon |e|=i\}$,~$e^{(i)}(H)=|E^{(i)}|$, and $e(H)=\vert E\vert$. 
        Similarly, we write~$v(H)=\vert V\vert$.

        \section{Proof of Theorem~\ref{thm:bowtie}}
        Let~$H=(V,E)$ be a simplicial complex with~$\vert V\vert=n$,~$B\not\subseteq H$, and assume for the sake of contradiction that~$\vert E\vert\geq \frac{9}{8}\binom{n}{2}+\frac{21}{16}n+C$, where~$C$ is some sufficiently large constant.
        
        Roughly speaking, we proceed as follows.
        By induction on the number of vertices, we may assume some minimum degree.
        The proof is then centered around a parameter~$\rho(v)$, defined for every vertex~$v$.
        Depending on the role that~$v$ plays in the link of another vertex~$w$, we know how~$\rho(v)$ and~$\rho(w)$ relate.
        This allows us to determine the rough structure of~$H$.
        By continuing to analyse~$\rho$ and by symmetrising a subset of the vertices determined by~$\rho$, we can simplify this structure further. 
        Finally, we obtain a~$B$-free simplicial complex which equals the extremal example aside from maybe the sizes of the two partition classes and has at least as many edges as the initial~$H$.
        A simple optimisation then finishes the proof.
        
        Let us formalise the proof by performing an induction on~$n$.
        Let~$n_0\in \mathds N$ be sufficiently large and choose~$C\in \mathds N$ large enough that every simplicial complex on~$n<n_0$ vertices with at least~$\frac{9}{8}\binom{n}{2}+\frac{21}{16}n + C$ edges contains a copy of~$B$. 
        We take those cases as the base of our induction.
        The remainder of this section is devoted to the induction step.

        For~$n\geq n_0$, assume that every simplicial complex on~$n-1$ vertices with at least~$\frac{9}{8}\binom{n-1}{2}+\frac{21}{16}(n-1)+C$ edges contains~$B$.
        Let~$H=(V,E)$ be a simplicial complex with~$\vert V\vert=n$,~$B\not\subseteq H$, and assume for the sake of  contradiction that~$\vert E\vert\geq \frac{9}{8}\binom{n}{2}+\frac{21}{16}n+C$.
        If there is a vertex~$v\in V$ with~$d(v)\leq \frac{9}{8}(n-1)+\frac{5}{16}$, we have that~$\vert E(H-v)\vert\geq \frac{9}{8}\binom{n}{2}+\frac{21}{16}n+C-d(v)-1\geq\frac{9}{8}\binom{n-1}{2}+\frac{21}{16}(n-1)+C$ and so, by induction,~$H-v$ contains a copy of~$B$.
        Therefore, we may assume that for every~$v\in V$,
        \begin{align}\label{eq:mindeg}
            d(v)>\frac{9}{8}(n-1)+\frac{5}{16}\,.
        \end{align}

        Clearly,~$H$ cannot contain any edges of size at least five.
        Thus, the links consist only of~$3$-edges,~$2$-edges, and~$1$-edges (and the empty set).
        Furthermore, we have the following.
        
        \begin{claim}\label{cl:no3path}
            For every~$v\in V$, there is no~$2$-uniform path with three edges in~$L_v$.
        \end{claim}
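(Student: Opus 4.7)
The plan is to prove the claim by a direct argument by contradiction: assume that for some $v\in V$ the link $L_v$ contains a $2$-uniform path with three edges, and exhibit an explicit copy of $B$ inside $H$. So suppose such a path $x_1x_2,\,x_2x_3,\,x_3x_4$ is present in $L_v$. By the very definition of the link, each $2$-edge $x_ix_{i+1}$ of $L_v$ corresponds to a $3$-edge $\{v,x_i,x_{i+1}\}$ of $H$; hence all three triples $\{v,x_1,x_2\}$, $\{v,x_2,x_3\}$ and $\{v,x_3,x_4\}$ lie in $E$. The vertices $x_1,x_2,x_3,x_4$ are pairwise distinct as vertices of a path, and they all lie in $V\setminus\{v\}$, so the five vertices $v,x_1,x_2,x_3,x_4$ are distinct.

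The key structural observation is that the two $3$-edges of $B$ share exactly one vertex (namely $v_1$), while the additional $2$-edge of $B$ connects the ``tail'' of one $3$-edge to the ``tail'' of the other. This matches the configuration above: the outer triples $\{v,x_1,x_2\}$ and $\{v,x_3,x_4\}$ meet only at $v$, and the middle triple $\{v,x_2,x_3\}$ plays a different role—by the downward closure of $E$, it forces the $2$-edge $\{x_2,x_3\}$ into $H$, providing the required bridge between the ``tails'' $x_2$ and $x_3$. Concretely, I would set $v_1\mapsto v$, $v_2\mapsto x_1$, $v_3\mapsto x_2$, $v_4\mapsto x_3$, $v_5\mapsto x_4$, and then verify that under this map each of the three generating edges of $B$ lies in $E$: the $3$-edges $v_1v_2v_3$ and $v_1v_4v_5$ become $\{v,x_1,x_2\}$ and $\{v,x_3,x_4\}$, both in $E$ by assumption, while the $2$-edge $v_3v_4$ becomes $\{x_2,x_3\}$, which belongs to $E$ by heredity from $\{v,x_2,x_3\}$. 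The remaining edges of $B$ (all strict subsets of these three) are then automatically in $E$ again by downward closure.

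This contradicts $B\not\subseteq H$ and completes the proof. I do not expect any real obstacle here; the argument uses only the definition of $L_v$, the fact that $E$ is hereditary, and the simple matching between the combinatorial skeleton of $B$ and a three-edge path through a common vertex. In particular, neither the minimum-degree lower bound~\eqref{eq:mindeg} nor the inductive hypothesis is needed for this claim, and the conclusion is uniform in $v$. The small subtlety to articulate carefully is why the \emph{middle} edge of the path is used only through its shadow $\{x_2,x_3\}$ rather than as a third $3$-edge of $B$; this is exactly what makes the embedding possible on only five vertices.
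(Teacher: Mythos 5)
Your proof is correct and follows essentially the same route as the paper: take the three path edges $x_1x_2,x_2x_3,x_3x_4$ in $L_v$, lift the two outer ones to the $3$-edges $vx_1x_2$ and $vx_3x_4$, use heredity to get the $2$-edge $x_2x_3$ from the middle one, and read off a copy of $B$ on $v,x_1,x_2,x_3,x_4$. The paper's argument is just a compressed version of exactly this embedding, so there is nothing to add.
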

        
        \begin{proof}
            Assume that there is a~$2$-uniform path~$abcd$ in the link of some~$v\in V$.
            Since~$H$ is a simplicial complex, we have~$bc\in E$ and, by the definition of the link, we have~$abv,cdv\in E$.
            Thus, there is a copy of~$B$ in~$H[a,b,c,d,v]$, a contradiction.
        \end{proof}

        For a simplicial complex~$F$ and a subcomplex~$F'\subseteq F$, we say that~$F'$ is \emph{isolated} in~$F$ if~$F$ does not contain an edge containing both a vertex of~$F'$ and a vertex in~$V(F)\setminus V(F')$.
        Claim~\ref{cl:no3path} means that every link consists only of: isolated vertices, isolated ($2$-)edges, isolated induced ($2$-uniform) stars with at least two edges, isolated ($2$-uniform) triangles, and potentially isolated~$3$-edges.
        However, the next claim states that $H$ does not have any $4$-edges, so the links do not in fact contain any $3$-edges.
        \begin{claim}\label{cl:nolargeedges}
            There are no~$4$-edges in~$H$.
        \end{claim}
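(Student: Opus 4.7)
The plan is to assume, for contradiction, that some $4$-edge $\{a,b,c,d\}$ is present in~$H$ and then derive a contradiction by combining the minimum-degree bound~\eqref{eq:mindeg} with an edge-count around $\{a,b,c,d\}$. Writing $X=V\setminus\{a,b,c,d\}$, the first step extracts structural consequences from Claim~\ref{cl:no3path}: the $3$-edge $\{b,c,d\}$ together with the triangle $bc,bd,cd$ it induces in $L_a$ must form an isolated component, since otherwise a further $2$-edge of $L_a$ attached to one of $b,c,d$ would yield a $3$-path. Applying the same observation at $b$, $c$, and $d$ and translating back to $H$ gives that no edge of $H$ contains two or more vertices of $\{a,b,c,d\}$ together with any vertex of~$X$. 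In particular, every edge between $\{a,b,c,d\}$ and $X$ has exactly one vertex in $\{a,b,c,d\}$.

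Next I would use $B$-freeness to further localize $3$- and $4$-edges between $\{a,b,c,d\}$ and $X$. If $\{a,x,y\}$ is a $3$-edge with $x,y\in X$, then it shares only $a$ with each of the $3$-edges $abc$, $abd$, $acd$, and so pairing it with those three triangles in turn rules out every $2$-edge in $\{xb,xc,xd,yb,yc,yd\}$. Defining
\begin{align*}
X_v=\{x\in X:xv\in E\text{ and }xu\notin E\text{ for every }u\in\{a,b,c,d\}\setminus v\}\,,
\end{align*}
this forces $x,y\in X_a$; the same argument at $b,c,d$ and for the $3$-subsets of any $4$-edge between shows that every $3$-edge between lies in $\{v\}\cup X_v^{(2)}$ and every $4$-edge between lies in $\{v\}\cup X_v^{(3)}$ for some $v\in\{a,b,c,d\}$. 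The sets $X_a,X_b,X_c,X_d$ are pairwise disjoint; write $D_1$ for their union and $D_{\ge 2}$ for the set of vertices of $X$ that are $2$-edge-adjacent to at least two of $a,b,c,d$.

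The count then closes the argument. Each $v\in\{a,b,c,d\}$ carries exactly $7$ edges of size at least~$2$ inside $\{a,b,c,d\}$, so~\eqref{eq:mindeg} gives
\begin{align*}
(\text{edges between})=\sum_{v\in\{a,b,c,d\}}(d(v)-7)>4\Bigl(\tfrac{9(n-1)}{8}+\tfrac{5}{16}\Bigr)-28=\tfrac{9n}{2}-\tfrac{125}{4}\,.
\end{align*}
For the matching upper bound, Claim~\ref{cl:no3path} forces $L_v$ restricted to $X_v$ to be a disjoint union of $2$-uniform stars and triangles together with isolated $3$-edges on a disjoint vertex set, giving at most $\tfrac{4}{3}|X_v|$ contributions from $3$- and $4$-edges at $v$ between $\{a,b,c,d\}$ and $X$. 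Combining with the bound $|D_1|+4|D_{\ge 2}|$ on the total number of $2$-edges between yields
\begin{align*}
(\text{edges between})\le\tfrac{7}{3}|D_1|+4|D_{\ge 2}|\le 4(n-4)\,,
\end{align*}
since $|D_1|+|D_{\ge 2}|\le n-4$ and $4>\tfrac{7}{3}$. For $n\ge 31$ this is strictly less than the lower bound above, which is the desired contradiction provided $n_0\ge 31$.

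The main step to verify carefully is the bound $\tfrac{4}{3}|X_v|$ on the $3$- and $4$-edge contribution to edges between at $v$: writing $p$ for the number of ``pure'' $2$-edges and $q$ for the number of isolated $3$-edges in $L_v|_{X_v}$, the isolation clause of Claim~\ref{cl:no3path} forces their supporting vertex sets to be disjoint, so that $p+3q\le |X_v|$ and hence the total $3$- and $4$-edge contribution at $v$ is $p+4q\le |X_v|+q\le \tfrac{4}{3}|X_v|$. Everything else is straightforward bookkeeping and an elementary linear optimisation in $|D_1|,|D_{\ge 2}|$.
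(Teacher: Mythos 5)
Your proof is correct and follows essentially the same route as the paper: the same consequences of $B$-freeness and Claim~\ref{cl:no3path} (no edge meets the $4$-edge in two vertices and the outside, non-adjacency between outside neighbours of different vertices of the $4$-edge, and the $4/3$ bound on link edges), combined with the minimum degree~\eqref{eq:mindeg}. The only difference is bookkeeping: you count all edges between $\{a,b,c,d\}$ and the rest, summed over the four vertices, whereas the paper derives the contradiction at the single vertex of the $4$-edge whose outside neighbourhood is smallest.
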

        \begin{proof}[Proof]
            Assume that~$e=v_1v_2v_3v_4$ is a~$4$-edge.
            Note that no~$3$-edge can intersect~$e$ in exactly two vertices, as otherwise~$B\subseteq H$.
            For~$i\in[4]$, let~$N_i$ be the set of vertices in~$V\setminus e$ which are contained in a~$3$-edge with~$v_i$.
            Note that for all distinct~$i,j\in[4]$, we have that~$v_i$ has no~$2$-edge to any vertex in~$N_j$ (as otherwise, again,~$B\subseteq H$), so in particular~$N_i$ and~$N_j$ are disjoint.
            Without loss of generality, we may assume that~$\vert N_1\vert\leq\dots\leq\vert N_4\vert$.
            Since all $2$-edges between~$v_1$ and the vertices in~$N_2\dcup N_3\dcup N_4$ are missing, $d^{(2)}(v_1) \leq (n-1) - (|N_2| + |N_3| + |N_4|)$. Together with~\eqref{eq:mindeg}, this implies that  
            \begin{align*}
            d^{(3)}(v_1)+d^{(4)}(v_1) > \vert N_2\vert+\vert N_3\vert+\vert N_4\vert+\frac{n-1}{8}+\frac{5}{16}. 
            \end{align*}
            On the other hand, by Claim~\ref{cl:no3path}, $L_{v_1}[N_1]$ does not contain any~$2$-uniform path on three edges.
            It is easy to check that this means the number of~$2$- and~$3$-edges in~$L_{v_1}$ is at most~$4|N_1|/3$. 
            Adding the $3$- and~$4$-edges in~$e$ containing~$v_1$, we obtain 
            \begin{align*}
            d^{(3)}(v_1)+d^{(4)}(v_1) \leq 4+\frac{4}{3}\vert N_1\vert, 
            \end{align*}
            which is a contradiction for~$n$ sufficiently large.
        \end{proof}
			
		\subsection*{Definition and basic properties of~$\rho$.}
            Let us now introduce the central parameter of the proof.
			Let~$\varphi(v)$ denote the number of isolated~$2$-edges in~$L_v$ and define
			\begin{align*}
				\rho(v)=e^{(2)}(L_v)+\varphi(v)-2\,.
			\end{align*}
			
			The idea behind this parameter is that it is (a lower bound on) the number of~$2$-edges that must be missing at a vertex~$w$ if~$w$ appears in an isolated~$2$-edge in~$L_v$.
			We make this precise with the following claim.
			
			\begin{claim}\label{cl:rhoup}
				Let~$v,w,x\in V$ be such that~$wx$ is an isolated edge in~$L_v$.
				Then $d^{(2)}(w)\leq n-1-\rho(v)$.
                Moreover, 
                $$\rho(w)\geq d^{(3)}(w)-2> \frac{n-1}{8}-\frac{27}{16}+\rho(v)\,.$$
			\end{claim}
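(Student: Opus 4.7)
The plan is to attack the two inequalities separately. The key observation for the first bound is that the isolation of~$wx$ in~$L_v$ forbids~$w$ from being joined by a~$2$-edge to most vertices appearing in the~$2$-edges of~$L_v$. The bound on~$\rho(w)$ will then follow by plugging this into the minimum degree condition~\eqref{eq:mindeg} and using that Claim~\ref{cl:nolargeedges} rules out~$4$-edges and hence forces~$e^{(2)}(L_w)=d^{(3)}(w)$.

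For the first step, I would fix any~$2$-edge~$ab\in E^{(2)}(L_v)$ distinct from~$wx$ and observe that~$vab$ and~$vwx$ are two~$3$-edges sharing the vertex~$v$. If~$w$ were joined by a~$2$-edge to~$a$ (or to~$b$), then the five vertices~$v,x,w,a,b$ would span a copy of~$B$: the~$2$-edge~$wa$ plays the role of~$v_3v_4$, the~$3$-edge~$vwx$ the role of~$v_1v_2v_3$, and~$vab$ the role of~$v_1v_4v_5$. The isolatedness of~$wx$ in~$L_v$ ensures~$a,b\notin\{w,x\}$, so the five vertices are genuinely distinct. Consequently,~$w$ can only have~$2$-edges to vertices outside~$T\setminus\{w,x\}$, where~$T$ denotes the set of vertices lying in some~$2$-edge of~$L_v$, giving~$d^{(2)}(w)\leq n+1-\vert T\vert$.

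The next step is a combinatorial count of~$\vert T\vert$. By Claim~\ref{cl:no3path}, the~$2$-edges of~$L_v$ decompose as a disjoint union of isolated edges, isolated induced stars with at least two leaves, and isolated triangles. Counting vertices and edges contributed by each type---$2$ vertices and~$1$ edge for an isolated edge,~$k+1$ vertices and~$k$ edges for a star on~$k$ leaves, and~$3$ of each for a triangle---yields~$\vert T\vert \geq e^{(2)}(L_v)+\varphi(v)=\rho(v)+2$, with slack precisely from the stars. Substituting into the previous inequality gives~$d^{(2)}(w)\leq n-1-\rho(v)$, as required.

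Finally, since~$H$ has no~$4$-edges, every~$2$-edge of~$L_w$ corresponds to a unique~$3$-edge of~$H$ containing~$w$, so~$e^{(2)}(L_w)=d^{(3)}(w)$ and hence~$\rho(w)\geq d^{(3)}(w)-2$. Applying~\eqref{eq:mindeg} to~$w$ together with the first-part bound on~$d^{(2)}(w)$ yields
\[
d^{(3)}(w) > \frac{9}{8}(n-1) + \frac{5}{16} - \bigl(n-1-\rho(v)\bigr) = \frac{n-1}{8} + \frac{5}{16} + \rho(v),
\]
which rearranges to the desired inequality~$\rho(w)>\frac{n-1}{8}-\frac{27}{16}+\rho(v)$. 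The main obstacle is really the first step: correctly identifying the embedded copy of~$B$ and verifying that the five vertices involved are distinct; everything afterwards is bookkeeping.
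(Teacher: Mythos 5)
Your proof is correct and follows essentially the same route as the paper: the same embedded copy of~$B$ on~$\{v,w,x,a,b\}$ forbidding $2$-edges from~$w$ to vertices of other $2$-edges of~$L_v$, the same component-wise count (isolated edges, stars, triangles) showing these vertices number at least~$\rho(v)$, and the same use of~\eqref{eq:mindeg} for the moreover part. One tiny imprecision: $e^{(2)}(L_w)=d^{(3)}(w)$ holds by the definition of the link and does not need Claim~\ref{cl:nolargeedges}; where the absence of $4$-edges (and larger) is actually used is in writing $d(w)=d^{(2)}(w)+d^{(3)}(w)$ in your final estimate.
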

			
			\begin{proof}
                Let~$ab\in E^{(2)}(L_v)\setminus \{wx\}$ and note that since~$wx$ is an isolated edge in~$L_v$, we have that~$a,b,w,x$ are distinct.
                If~$aw\in E$, then~$H[a,b,v,w,x]$ contains a copy of~$B$, a contradiction.
                Thus, the only~$2$-edge in~$H$ formed by~$w$ and a vertex that is contained in a~$2$-edge of~$L_v$ is~$wx$.
                In other words,~$w$ is `missing' a~$2$-edge to every vertex spanned by a~$2$-edge in~$E^{(2)}(L_v)\setminus \{wx\}$.

                Recall that, by Claims~\ref{cl:no3path} and~\ref{cl:nolargeedges},~$L_v$ consists only of isolated vertices, isolated $2$-edges, isolated induced ($2$-uniform) stars with at least two edges, and isolated ($2$-uniform) triangles.
                Clearly, every isolated~$2$-edge in~$L_v$ spans exactly two vertices. 
                Moreover, we have that isolated ($2$-uniform) triangles and stars span at least as many vertices as they have~$2$-edges. 
                Therefore, the total number of vertices spanned by $E^{(2)}\setminus \{wx\}$ is at least~$e^{(2)}(L_v) + \varphi(v) - 2 = \rho(v)$.
                By the argument in the paragraph above, each of those vertices corresponds to a missing~$2$-edge in~$w$. 
                This yields the first inequality of the claim.
				
				The moreover part simply follows from the first part,~\eqref{eq:mindeg}, and the definition of~$\rho$.
			\end{proof}
		
			The following claim is in a sense the reverse of the previous one.
			For~$v,w\in V$, we say that~$w$ is a \emph{tip of a star in~$L_v$}, and write~$w\leftarrow v$, if there are~$x,y\in V$ such that~$wx,xy\in E(L_v)$.
			
			\begin{claim}\label{cl:rhodown}
				Let~$v,w\in V$ be such that~$w\leftarrow v$.
				Then~$d^{(2)}(v)\leq n-\rho(w)$.
				Moreover, $$\rho(v)\geq d^{(3)}(v)-2> \frac{n-1}{8}-\frac{43}{16}+\rho(w)\,.$$
			\end{claim}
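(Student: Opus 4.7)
The plan is to imitate the proof of Claim~\ref{cl:rhoup} by showing that if $u$ is incident to a~$2$-edge in~$L_w$, then~$vu\notin E$ (apart from a few exceptional~$u$), since otherwise a copy of~$B$ would appear. Unfolding $w\leftarrow v$, we have $x,y\in V$ with $wx,xy\in E(L_v)$, so that $vwx,vxy\in E$ and, in particular, $vx\in E(L_w)$.

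The crucial new ingredient, which is not present in the proof of Claim~\ref{cl:rhoup}, is a constraint coming from the link~$L_x$: from $vwx,vxy\in E$ we get $vw,vy\in E(L_x)$, a 2-uniform path of length~$2$ centred at~$v$. Since by Claim~\ref{cl:no3path} $L_x$ contains no 2-uniform path with three edges, any $u\in V\setminus\{v,y\}$ with $uwx\in E$ would produce $uw\in E(L_x)$ and hence the 3-edge path $u$--$w$--$v$--$y$ in $L_x$, a contradiction. Therefore $ux\notin E(L_w)$ for every $u\in V\setminus\{v,y\}$.

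Let now $T(w)$ be the set of vertices incident to some edge of $E^{(2)}(L_w)$. Using the component description of $L_w$ (isolated edges, stars with $\geq2$ edges, or triangles, by Claims~\ref{cl:no3path} and~\ref{cl:nolargeedges}), we have $|T(w)|\geq e^{(2)}(L_w)+\varphi(w)=\rho(w)+2$. We claim $vu\notin E$ for every $u\in T(w)\setminus\{v,x,y\}$. Indeed, pick any $uz\in E^{(2)}(L_w)$; then $z\neq u,w$, and by the previous paragraph $z\neq x$. If $z=v$, then $uvw\in E$, and the 3-edges $uvw,vxy$ (sharing $v$) together with the 2-edge $wx$ form a copy of $B$ on the five distinct vertices $\{u,v,w,x,y\}$, a contradiction. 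If $z\notin\{v,x\}$ and, for contradiction, $vu\in E$, then the 3-edges $vwx,wuz$ (sharing $w$) together with the 2-edge $vu$ form a copy of $B$ on the five distinct vertices $\{u,v,w,x,z\}$, again a contradiction. Consequently, $d^{(2)}(v)\leq(n-1)-|T(w)\setminus\{v,x,y\}|\leq(n-1)-(\rho(w)-1)=n-\rho(w)$.

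For the moreover part, Claim~\ref{cl:nolargeedges} gives $d(v)=d^{(2)}(v)+d^{(3)}(v)$, so by~\eqref{eq:mindeg} and the first inequality,
$$d^{(3)}(v)>\tfrac{9}{8}(n-1)+\tfrac{5}{16}-(n-\rho(w))=\tfrac{n-1}{8}-\tfrac{11}{16}+\rho(w);$$
combining with $\rho(v)=d^{(3)}(v)+\varphi(v)-2\geq d^{(3)}(v)-2$ yields the claimed bound. The main obstacle in this argument is the detour through $L_x$: without it, we would not be able to rule out the case $z=x$, and the exceptional set could not be limited to $\{v,x,y\}$. This asymmetry between~$v$ and~$x$ as endpoints of $vx\in L_w$ is also the reason why the bound here is~$n-\rho(w)$ rather than~$n-1-\rho(v)$ as in Claim~\ref{cl:rhoup}.
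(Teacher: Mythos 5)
Your proof is correct and follows essentially the same strategy as the paper: exhibit copies of $B$ to show that $v$ is missing a $2$-edge to (almost) every vertex covered by $E^{(2)}(L_w)$, then count those vertices and conclude via~\eqref{eq:mindeg} and the definition of~$\rho$. The only difference is organisational: the paper splits into cases (reducing to Claim~\ref{cl:rhoup} when $vx$ is isolated in~$L_w$ and exhibiting $B$ directly otherwise), while you treat all cases uniformly, using Claim~\ref{cl:no3path} applied to~$L_x$ to control the $2$-edges at~$x$ in~$L_w$ --- a valid, slightly cleaner packaging of the same underlying $B$-configurations.
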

		
			\begin{proof}
				Let~$x,y\in V$ be such that~$wx,xy\in E(L_v)$ and note that~$vx\in E(L_w)$.
				If~$vx$ is an isolated edge in~$L_w$, the statement follows from Claim~\ref{cl:rhoup}.
				If there is some~$y'\in V\setminus \{v,x,y\}$ such that~$vy'\in E(L_w)$ or~$xy'\in E(L_w)$, then~$H[v,w,x,y,y']$ contains~$B$.
				Thus, we are left with the remaining case that both~$v$ and~$x$ lie in at most two~$2$-edges in~$L_w$, namely,~$vx$ and possibly~$vy$ and~$xy$.
				As in the proof of Claim~\ref{cl:rhoup}, if there is~$ab\in E^{(2)}(L_w)$ with~$ab\cap\{v,x,y\}=\emptyset$ and~$av\in E$, then~$B\subseteq H[a,b,v,w,x]$.
                That is,~$v$ is missing a~$2$-edge to every vertex spanned by an edge in~$E^{(2)}(L_w)\setminus\{vx, vy, xy\}$.
                Again, as in the proof of Claim~\ref{cl:rhoup}, there are at least~$e^{(2)}(L_w)-3+\varphi(w)=\rho(w)-1$ such vertices. 
				This yields the first part of the claim.
				
				The moreover part again follows from the first part,~\eqref{eq:mindeg}, and the definition of~$\rho$.
			\end{proof}
			
			At this point, we already begin to see some resemblance of~$H$ to the extremal example, namely, in the structure of the links of those vertices with extremal~$\rho$.
			
			\begin{cor}\label{cor:minmaxrho}
				Let~$v\in V$.
				\begin{enumerate}
					\item\label{it:minrho} If~$\rho(v)=\min_{w\in V}\rho(w)$, then the~$2$-edges of~$L_v$ form a matching of size larger than~$\frac{n-1}{8}+\frac{5}{16}$.
					\item\label{it:maxrho} If~$\rho(v)=\max_{w\in V}\rho(w)$, then the~$2$-edges of~$L_v$ do not contain any isolated edges.
                    \item\label{it:notriangle} The link of~$v$ consists only of isolated vertices, isolated $2$-edges, and isolated induced ($2$-uniform) stars with at least two edges.
				\end{enumerate}
			\end{cor}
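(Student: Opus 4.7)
The plan is to combine the structural consequences of Claims~\ref{cl:no3path} and~\ref{cl:nolargeedges}---which together force the graph of $2$-edges in any link~$L_v$ to be a vertex-disjoint union of isolated vertices, isolated edges, stars with at least two edges, and triangles (the only connected graphs containing no path on four vertices)---with the $\rho$-inequalities provided by Claims~\ref{cl:rhoup} and~\ref{cl:rhodown}. The main obstacle is part~\ref{it:notriangle}: once triangles have been excluded from every link, parts~\ref{it:minrho} and~\ref{it:maxrho} will fall out of single applications of Claims~\ref{cl:rhodown} and~\ref{cl:rhoup} respectively.

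For part~\ref{it:notriangle}, I would suppose for contradiction that some link~$L_v$ contains a triangle~$xyz$. Then $vxy,vxz,vyz\in E$, which by passing to links gives $vy,vz\in E(L_x)$ and $vx,vz\in E(L_y)$. The crucial observation---which works regardless of whether $xyz\in E$, so there is no need to split into cases---is that these inclusions simultaneously witness $y\leftarrow x$ (take $a=v$, $b=z$ in the definition of a tip, using $yv,vz\in E(L_x)$) and $x\leftarrow y$ (symmetrically, using $xv,vz\in E(L_y)$). Applying the ``moreover'' clause of Claim~\ref{cl:rhodown} in both directions then yields $\rho(x)>\tfrac{n-1}{8}-\tfrac{43}{16}+\rho(y)$ and $\rho(y)>\tfrac{n-1}{8}-\tfrac{43}{16}+\rho(x)$, and summing these is a contradiction for~$n$ large.

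For part~\ref{it:minrho}, using part~\ref{it:notriangle} the only possible components of~$L_v$ are isolated vertices, isolated $2$-edges, and stars with at least two edges. If such a star existed with center~$w$ and distinct leaves~$x_1,x_2$, then $x_1\leftarrow v$ (via $x_1w,wx_2\in E(L_v)$), and the moreover clause of Claim~\ref{cl:rhodown} would force $\rho(x_1)<\rho(v)$ for large~$n$, contradicting minimality. Hence the $2$-edges of~$L_v$ form a matching; its size equals $d^{(3)}(v)\geq d(v)-d^{(2)}(v)>\tfrac{n-1}{8}+\tfrac{5}{16}$ by~\eqref{eq:mindeg} and the trivial bound $d^{(2)}(v)\leq n-1$. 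For part~\ref{it:maxrho}, if $wx$ were an isolated $2$-edge in~$L_v$, Claim~\ref{cl:rhoup} would give $\rho(w)>\rho(v)+\tfrac{n-1}{8}-\tfrac{27}{16}>\rho(v)$ for large~$n$, contradicting maximality of~$\rho(v)$.
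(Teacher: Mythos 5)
Your proposal is correct and follows essentially the same route as the paper: part~\eqref{it:maxrho} via Claim~\ref{cl:rhoup}, part~\eqref{it:notriangle} by observing that a triangle in a link forces $x\leftarrow y$ and $y\leftarrow x$ simultaneously and then playing the two instances of Claim~\ref{cl:rhodown} against each other, and part~\eqref{it:minrho} by noting that any non-matching component produces a tip of a star and hence a vertex of strictly smaller $\rho$, with the size bound coming from~\eqref{eq:mindeg}, $d^{(2)}(v)\le n-1$, and the absence of $4$-edges (Claim~\ref{cl:nolargeedges}). The only cosmetic difference is that you eliminate triangles before proving~\eqref{it:minrho}, whereas the paper handles them within~\eqref{it:minrho} directly, since a triangle also contains a tip of a star.
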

		
			\begin{proof}
				For~\eqref{it:minrho}, note that if the~$2$-edges of~$L_v$ do not form a matching, then there would be a vertex~$w$ that is a tip of a star in~$L_v$, implying that~$\rho(w)<\rho(v)$ by  the second inequality in Claim~\ref{cl:rhodown}.
                Moreover,~\eqref{eq:mindeg} implies that~$d^{(3)}(v)> \frac{n-1}{8}+\frac{5}{16}$.
				
				For~\eqref{it:maxrho}, note that if there is an isolated~$2$-edge~$xy$ in~$L_v$, then~$\rho(x)>\rho(v)$ by Claim~\ref{cl:rhoup}.

                To see~\eqref{it:notriangle}, recall that we already know that~$L_v$ consists only of isolated vertices, isolated $2$-edges, isolated induced ($2$-uniform) stars with at least two edges, and isolated ($2$-uniform) triangles.
                Assume now that there is a triangle with vertices~$w$,~$x$, and~$y$ in~$L_v$.
                Then considering the link of~$w$ yields~$x\leftarrow w$ (since~$x$ and~$y$ are tips of a star centered in~$v$ in~$L_w$) and thus~$\rho(x)<\rho(w)$ by  Claim~\ref{cl:rhodown}.
                However, considering the link of~$x$ similarly gives~$w\leftarrow x$ and so~$\rho(w)<\rho(x)$, a contradiction.
			\end{proof}
		
			Next, we bring the structure closer to the extremal example by performing some symmetrisations.
			
		\subsection*{Symmetrisation of vertices}
			
			We begin by defining~$V_1$ as the set of vertices in~$H$ for which the~$2$-edges in the link form a matching.
			Note that, by Corollary~\ref{cor:minmaxrho} \eqref{it:minrho}, we have~$V_1\neq \emptyset$. 
            Moreover, no two vertices of~$V_1$ are contained in the same~$3$-edge.
	        Indeed, if~$v,w \in V_1$ are such that~$wx\in L_v$ for some vertex~$x\in V$, then, by Claim~\ref{cl:rhoup}, we have~$\rho(w)>\rho(v)$.
            But since~$vx\in L_w$ as well, we have~$\rho(w)<\rho(v)$, a contradiction.
            
			Now pick~$v\in V_1$ with~$d(v)$ maximal among the vertices in~$V_1$.
			Clearly, if we replace each vertex in~$V_1$ (one by one) by a copy of~$v$ and add all~$2$-edges between these vertices, then the total number of edges does not decrease. 
            Moreover, recalling that the~$2$-edges of $L_v$ form a matching, it is not hard to check that no new copies of~$B$ are created in this way.
            Thus, from now on we can consider $H$ to be the resulting simplicial complex.
            In particular, all vertices~$v\in V_1$ have the same link~$L_v$ whose~$2$-edges form a fixed matching~$M$.
            
            Note that, by~\eqref{eq:mindeg}, we have 
            $\vert M\vert> \frac{n-1}{8}+\frac{5}{16}$ and therefore, by the definition of~$\rho(\cdot)$, we have, for all~$v\in V_1$, that 
            \begin{align}\label{eq:minrho}
                \rho(v)> 2\left(\frac{n-1}{8}+\frac{5}{16}\right)-2 = \frac{n-1}{4} - \frac{11}{8}\,.
            \end{align}
			
			The following notation will help us as we make more use of Claims~\ref{cl:rhoup} and~\ref{cl:rhodown}.
			For~$v,w\in V$ and~$i\in\mathds{N}$, write~$v\overset{i}{\leftarrow} w$ if there are vertices~$v=v_0,\dots,v_i=w$ such that, for all~$j\in[i]$, we have that~$v_{j-1}\leftarrow v_j$.
			Furthermore, for~$S\subseteq V$, write~$S\overset{i}{\leftarrow} w$ if there is some~$v\in S$ such that~$v\overset{i}{\leftarrow} w$.
			Note that~$v\overset{1}{\leftarrow} w$ just means~$v\leftarrow w$.
			
			\begin{claim}\label{cl:nolongbacktrack}
				There is no~$v\in V$ and~$i\geq 3$ such that~$V_1\overset{i}{\leftarrow} v$.
			\end{claim}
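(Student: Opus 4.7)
I would proceed by contradiction: suppose $V_1 \overset{i}{\leftarrow} v$ for some $i \geq 3$. Taking the minimum such $i$, we may assume $i = 3$, so there is a chain $u = v_0 \in V_1$, $v_1, v_2, v_3 = v$ with $v_{j-1} \leftarrow v_j$ for $j = 1, 2, 3$. For each arrow I fix witnesses $x_j, y_j \in V$ satisfying $v_j v_{j-1} x_j, v_j x_j y_j \in E$.

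The first key observation is that $u \in V_1$ together with $u v_1 x_1 \in E$ forces $v_1 x_1 \in M$, so by the symmetrisation $u' v_1 x_1 \in E$ for every $u' \in V_1$. I also record two auxiliary facts: $v_1, v_2, v_3 \notin V_1$ (since their links all contain a $2$-edge star, whereas the link of every $V_1$-vertex is a matching), and $v_1, v_2, v_3$ are pairwise distinct (else a two-sided $\leftarrow$-relation between the same pair of vertices would contradict the strict monotonicity of $\rho$ supplied by Claim~\ref{cl:rhodown}).

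The core mechanism is to locate a $2$-uniform path of length $3$ in some carefully chosen link, contradicting Claim~\ref{cl:no3path}. If $x_1 = x_2$, then the $3$-edges $u v_1 x_1, v_2 v_1 x_1, v_2 x_1 y_2$ produce $2$-edges $u v_1, v_1 v_2, v_2 y_2 \in L_{x_1}$ forming the path $u - v_1 - v_2 - y_2$; the four vertices are distinct because $u \in V_1$ while $v_1, v_2 \notin V_1$, $y_2 \neq v_1, v_2$ by definition of the arrow, and $u = y_2$ would give $v_2 x_1 \in M$, impossible since $v_1 x_1 \in M$ and $M$ is a matching. If instead $x_1 \neq x_2 = x_3$, the analogous argument in $L_{x_2}$ yields the path $v_1 - v_2 - v_3 - y_3$; the borderline sub-case $y_3 = v_1$ is handled by observing that $v_1 v_3 x_2 \in E$ becomes a new $3$-edge, which combined with a $3$-edge $u' v_1 x_1$ ($u' \in V_1$) and the $2$-edge $v_1 v_2$ delivers an explicit copy of $B$.

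The remaining case is $x_1 \neq x_2$ and $x_2 \neq x_3$. If $x_3 = x_1$, then the $3$-edges $u' v_1 x_1$ and $v_3 v_2 x_1$ share $x_1$, and the $2$-edge $v_1 v_2 \in E$ (immediate from $v_1 v_2 x_2 \in E$) produces a copy of $B$ directly. The main obstacle is the fully generic sub-case in which $x_1, x_2, x_3$ are pairwise distinct: here the neat $3$-path arguments in the links $L_{x_j}$ do not immediately apply, and I would analyse the component of $v_3 x_3$ in $L_{v_2}$, which by assumption is disjoint from the star at $x_2$. Depending on whether this component is an isolated edge or a larger star, one either sharpens the $\rho$-estimate for $v_3$ via Claim~\ref{cl:rhoup} to force additional $3$-edges through $v_2$, or uses the extra tips of that star as witnesses for a $B$-embedding spanning $v_1, v_2, v_3, x_3$ and a suitable fifth vertex. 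It is in this final sub-case, where all the convenient coincidences fail and one must exploit global structure rather than a single clean link path, that the proof has to work hardest.
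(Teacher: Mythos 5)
Your argument has a genuine gap: the ``fully generic'' sub-case in which the witnesses $x_1,x_2,x_3$ are pairwise distinct is exactly the main case, and you do not prove it. What you offer there is a sketch of two possible directions (``sharpen the $\rho$-estimate for $v_3$ via Claim~\ref{cl:rhoup}'' or ``use the extra tips of that star as witnesses for a $B$-embedding''), but neither is carried out, and neither obviously works: Claim~\ref{cl:rhoup} only applies if $v_3x_3$ is an \emph{isolated} edge of $L_{v_2}$, and even then it gives a single inequality $\rho(v_3)>\rho(v_2)+\frac{n-1}{8}-\frac{27}{16}$ rather than a contradiction, while the component of $v_3x_3$ in $L_{v_2}$ being a star does not by itself hand you five vertices spanning a copy of $B$ (the candidate fifth vertices can coincide with $v_1,x_2,y_2$, and the needed $2$-edges need not be present). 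So as written the claim is not established; the local case analysis on coincidences among the $x_j$'s does not close.

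It is worth comparing with the paper's proof, which avoids any such case analysis and is purely quantitative: since $v_0\in V_1$, inequality~\eqref{eq:minrho} gives $\rho(v_0)>\frac{n-1}{4}-\frac{11}{8}$; applying Claim~\ref{cl:rhodown} along the chain $v_0\leftarrow v_1\leftarrow v_2$ twice gives $\rho(v_2)>\frac{n-1}{2}-\frac{27}{4}$, and one more application at $v_2\leftarrow v_3$ gives both $d^{(2)}(v_3)\leq n-\rho(v_2)\approx\frac{n}{2}$ and $d^{(3)}(v_3)>\frac{5(n-1)}{8}-\frac{119}{16}$. This contradicts $d^{(3)}(v_3)\leq d^{(2)}(v_3)-1$, which follows from Corollary~\ref{cor:minmaxrho}~\eqref{it:notriangle} (the $2$-edges of $L_{v_3}$ form isolated edges and stars, so their number is less than the number of vertices they span). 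In other words, the intended mechanism is the cumulative growth of $\rho$ down a chain of length three, not a local configuration hunt; if you want to salvage your structural approach, you will in effect need this counting input anyway to handle the case where all the convenient coincidences fail.
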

			
			\begin{proof}
				Assume for the sake of contradiction  that~$v\in V$ and~$i\in\mathds{N}$ with~$i\geq 3$ such that~$V_1\overset{i}{\leftarrow}v$.
				Then there are~$v_0\in V_1$ and~$v_1,v_2,v_3\in V$ (with~$v_3$ being~$v$ if~$i=3$) such that~$v_0\leftarrow v_1\leftarrow v_2\leftarrow v_3$.
                By~\eqref{eq:minrho}, we know that~$\rho(v_0)> \frac{n-1}{4}-\frac{11}{8}$.
				Applying Claim~\ref{cl:rhodown} twice yields~$\rho(v_2)> \frac{n-1}{4}-\frac{11}{8}+2\big(\frac{n-1}{8} - \frac{43}{16}\big)= \frac{n-1}{2}-\frac{27}{4}$.
				This means, again by Claim~\ref{cl:rhodown}, that~$d^{(2)}(v_3)\leq n - \rho(v_2)\geq n - \frac{n-1}{2} + \frac{27}{4} = \frac{n-1}{2}+\frac{31}{4}$ and 
                $$d^{(3)}(v_3)
                >
                \frac{n-1}{8}-\frac{43}{16}+2+\rho(v_2)
                \geq
                \frac{5(n-1)}{8}-\frac{119}{16}\,.$$
                It follows that~$d^{(3)}(v_3)>d^{(2)}(v_3)$ for $n$ sufficiently large.
				However, Corollary~\ref{cor:minmaxrho}~\eqref{it:notriangle} implies that~$d^{(3)}(v_3)\leq d^{(2)}(v_3)-1$, yielding the required contradiction.
			\end{proof}
		
			Next, guided by the intuition coming from the extremal example,  we show that the vertices in~$V(M)$ (which we might also call~$V_2$) are exactly the vertices~$v$ for which~$V_1\overset{1}{\leftarrow} v$.
			
			\begin{lemma}\label{lem:V(M)assshould}
				For~$v\in V$, the following are equivalent:
				\begin{itemize}
					\item $v\in V(M)$.
					\item $V_1\overset{1}{\leftarrow}v$ holds and $V_1\overset{i}{\leftarrow}v$ does not hold for~$i\geq 2$.
				\end{itemize}
			\end{lemma}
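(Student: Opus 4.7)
The plan is to prove the two implications separately. The backward direction unpacks the definition of $V_1$, while the forward direction requires locating a copy of $B$ to rule out $V_1 \overset{2}{\leftarrow} v$.

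For $(\Leftarrow)$, suppose $V_1 \overset{1}{\leftarrow} v$ is witnessed by $v_0 \in V_1$ and $x, y \in V$ with $v_0 x, xy \in E(L_v)$. Then $vv_0 x \in E$, so $vx \in E^{(2)}(L_{v_0}) = M$ (since $v_0 \in V_1$), and hence $v \in V(M)$. Note that this direction does not use the second half of the right-hand condition.

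For $(\Rightarrow)$, assume $v \in V(M)$ with matching partner $w$. For every $v_0 \in V_1$, the 3-edge $vwv_0 \in E$ yields $wv_0 \in E(L_v)$; taking two distinct $v_0, v_0' \in V_1$, the edges $v_0 w, w v_0' \in E(L_v)$ show that $v_0 \leftarrow v$, establishing $V_1 \overset{1}{\leftarrow} v$. To see that $V_1 \overset{i}{\leftarrow} v$ fails for $i \geq 2$, Claim~\ref{cl:nolongbacktrack} reduces us to the case $i = 2$, so suppose for contradiction $v_0 \leftarrow v_1 \leftarrow v$ with $v_0 \in V_1$. Applying the $(\Leftarrow)$ argument to $v_1$ gives $v_1 \in V(M)$, with matching partner $w_1$, and the witnesses provide 3-edges $v_0 v_1 w_1, vv_1 c \in E$ (for some $c$), in particular the 2-edge $v v_1 \in E$. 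Together with the matching 3-edge $v_0 v w \in E$, the two 3-edges $v_0 v_1 w_1$ and $v_0 v w$ share precisely $v_0$, with the 2-edge $v_1 v$ joining them, exhibiting a copy of $B$ on $\{v_0, v_1, w_1, v, w\}$---the desired contradiction.

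The main obstacle is verifying distinctness of these five vertices so that this $B$ is genuine. Most coincidences follow immediately: $v_0 \in V_1$ is disjoint from $V(M)$, which contains $v_1, w_1, v, w$; $M$ being a matching gives $v_1 \neq w_1$ and $v \neq w$; the 3-edge $vv_1 c$ forces $v_1 \neq v$; and $w_1 = w$ would make $v_1 = v$, impossible. The nontrivial case is $v_1 = w$ (equivalently $w_1 = v$), which would force $w \leftarrow v$, i.e., $w$ to be a tip of a star in $L_v$. However, $L_v$ contains $wv_0'$ for every $v_0' \in V_1$, so $w$ has at least two neighbors in $L_v$ and is the \emph{center} of the corresponding star rather than a tip; this contradicts $w \leftarrow v$ and completes the argument.
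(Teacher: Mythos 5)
Your backward direction is the same as the paper's, and the non-degenerate part of your forward direction is fine: the copy of $B$ on $\{v_0,v_1,w_1,v,w\}$ is precisely the paper's observation that a $2$-edge between vertices of two distinct edges of $M$ yields a copy of $B$ through a vertex of $V_1$. The genuine gap is that you twice rely on $\vert V_1\vert\geq 2$: first when "taking two distinct $v_0,v_0'\in V_1$" to exhibit the star in $L_v$ witnessing $V_1\overset{1}{\leftarrow}v$, and again in the degenerate case $v_1=w$, where you need a second vertex of $V_1$ to force $w$ to be the centre (rather than a tip) of a star in $L_v$. At this stage of the argument only $V_1\neq\emptyset$ is known (Corollary~\ref{cor:minmaxrho}); the fact that $\vert V_1\vert\geq 2$ is established only later, inside the proof of Lemma~\ref{lem:V2vtsonly1star}, and that argument uses Claim~\ref{cl:posstowlkdwn} together with Lemma~\ref{lem:V(M)assshould} itself. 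So invoking $\vert V_1\vert\geq 2$ here is circular, and you offer no independent justification. If $\vert V_1\vert=1$, your argument produces only the single edge $wv_0$ in $L_v$, which does not show that $v_0$ is a tip of a star, and your treatment of the case $v_1=w$ collapses as well.

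The paper gets around this with a single $x\in V_1$: since $vw\in M$ is an isolated edge of $L_x$, Claim~\ref{cl:rhoup} gives $\rho(v)>\rho(x)+\tfrac{n-1}{8}-\tfrac{27}{16}$; if $wx$ were an isolated edge of $L_v$, the same claim would give the reverse inequality, a contradiction, so $wx$ lies in a star of $L_v$ whose additional edge must go through $w$ (an additional edge through $x$ would place two $2$-edges through $v$ in $L_x$), whence $x\overset{1}{\leftarrow}v$. Likewise, in the degenerate case ($r=w$ in the paper's notation, your $v_1=w$), the edge $xw\in E(L_v)$ becomes a pendant edge attached to the star of $L_v$ of which $w$ is a tip, creating a $2$-uniform path with three edges and contradicting Claim~\ref{cl:no3path}; this also needs only one vertex of $V_1$. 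To repair your proof you would need either to run such a $\rho$-based argument or to give an independent proof that $\vert V_1\vert\geq 2$.
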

			
			\begin{proof}
				``$\Leftarrow$'': Let~$x\in V_1$ be a tip of a star in~$L_v$.
				This means in particular that~$v$ is contained in a~$2$-edge in~$L_x$.
				But any such edge is in~$M$, so that~$v\in V(M)$.
				
				``$\Rightarrow$'': Let~$w\in V$ be such that~$vw\in M$ and let~$x\in V_1$.
				In particular,~$vw$ is an isolated edge in~$L_x$, so that~$\rho(v)> \rho(x)+\frac{n-1}{8}-\frac{27}{16}$ by Claim~\ref{cl:rhoup}.
				Note that~$wx\in E(L_v)$.
				If~$wx$ is an isolated edge in~$L_v$, then another application of Claim~\ref{cl:rhoup} would yield~$\rho(x)>\rho(v)+\frac{n-1}{8}-\frac{27}{16}$, a contradiction.
				Therefore,~$wx$ is not an isolated edge in~$L_v$, meaning that there is another $2$-edge intersecting $wx$.
				Since~$vw$ is an isolated edge in~$L_x$, such an edge cannot contain~$x$, so it must contain~$w$, implying that~$d^{(2)}_{L_v}(w)\geq 2$. 
                In other words,~$x$ is a tip of a star in~$L_v$, i.e.,~$x\overset{1}{\leftarrow} v$.
				
				By Claim~\ref{cl:nolongbacktrack}, it is enough to show that~$V\overset{i}{\leftarrow}v$ does not hold for any~$i\geq 2$.
				Assume for the sake of contradiction that it does hold, i.e., that there is some~$x \in V_1$ and~$r\in V$ such that~$x\leftarrow r\leftarrow v$.
    
                Note that $V_1\overset{1}{\leftarrow}r$ holds and $V_1\overset{i}{\leftarrow}r$ does not hold for~$i\geq 2$, since otherwise we would have~$V_1\overset{i+1}{\leftarrow}v$ for some~$i\geq 2$, contradicting Claim~\ref{cl:nolongbacktrack}.
                Hence, by ``$\Leftarrow$'', we know that~$r\in V(M)$.
                Observe that any~$2$-edge in~$H$ between vertices in distinct edges of~$M$ would yield a copy of~$B$ and that~$r\leftarrow v$ in particular implies that~$rv\in E(H)$.
                Thus,~$rv\in M$ (and so~$r=w$),
                implying that~$rx\in E(L_v)$.
                Finally, recall that~$r$ is a tip of a star in~$L_v$ and note that~$x$ cannot be a vertex of this star, since~$rv$ is an isolated~$2$-edge in~$L_x$.
                Therefore, in~$L_v$, the~$2$-edge $xr$ is a pendant edge to this star.
                However, this gives a ($2$-uniform) path with three edges in~$L_v$, contradicting Claim~\ref{cl:no3path}.
			\end{proof}
		
		\subsection*{Gaining further structure}
		
            Recall that Corollary~\ref{cor:minmaxrho}~\eqref{it:notriangle} states that the link of any vertex consists only of isolated vertices, isolated $2$-edges, and isolated ($2$-uniform) induced stars with at least two edges.
            The following key lemma says that vertices of~$V(M)$ contain exactly one star in their links. 
			
			\begin{lemma}\label{lem:V2vtsonly1star}
                If~$vw\in M$, then~$L_v$ contains exactly one isolated ($2$-uniform) induced star with at least two edges and  
                the centre of this star is~$w$. 
			\end{lemma}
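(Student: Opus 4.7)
The plan is to verify existence (with $w$ as the centre) and uniqueness separately.

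\emph{Existence and centre.} Since $v\in V(M)$, Lemma~\ref{lem:V(M)assshould} yields some $x\in V_1$ with $x\overset{1}{\leftarrow}v$, and the argument already used in the proof of Lemma~\ref{lem:V(M)assshould} (which established $d^{(2)}_{L_v}(w)\geq 2$) shows that $w$ is the centre of a star in $L_v$ containing $x$ as a tip. By Corollary~\ref{cor:minmaxrho}\eqref{it:notriangle}, this is an isolated induced star with at least two edges.

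\emph{Uniqueness setup.} Suppose for contradiction that $L_v$ contains a second isolated induced star $S'$ with at least two edges, centred at some $u\neq w$, with tips $y_1,\dots,y_{e_2}$. Write $S_1$ for the star at $w$ and $T_{S_1}\supseteq V_1$ for its tips. For any $t\in T_{S_1}$ and any tip $y$ of $S'$, the $3$-edges $vwt$ and $vuy$ meet only in $v$, so if any of $wu,\,wy,\,tu,\,ty$ were in $E(H)$, then $\{v,w,t,u,y\}$ would span a copy of $B$. Hence no $2$-edge joins $V(S_1)=\{w\}\cup T_{S_1}$ to $V(S')=\{u\}\cup\{y_1,\dots,y_{e_2}\}$. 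In particular, for every $x\in V_1$ the $2$-edge $ux$ is absent, so $x$ is isolated in $L_u$ and cannot be a tip of any star there; thus $V_1\not\overset{1}{\leftarrow}u$, and Lemma~\ref{lem:V(M)assshould} forces $u\notin V(M)$, while $u\notin V_1$ holds because $vy_1,vy_2\in L_u$ share $v$, violating the matching property of links of $V_1$-vertices. A symmetric computation (in $L_u$ the neighbours of $v$ are exactly $y_1,\dots,y_{e_2}$, and each $y_i$ has only the edge $vy_i$ there) shows that $L_u$ itself contains an isolated induced star at $v$ with $e_2$ edges.

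\emph{Final contradiction.} This is the step I expect to be the main obstacle. The plan is a degree count combining the constraints above with Claim~\ref{cl:rhoup}. After symmetrisation, every $x\in V_1$ satisfies $d^{(2)}(x)\geq|V_1|-1+|V(M)|$, while the forbidden edges between $V(S_1)$ and $V(S')$ give $d^{(2)}(x)\leq n-2-e_2$; together with $d^{(3)}(x)=|M|$ and~\eqref{eq:mindeg}, this bounds $|V(M)|$ from above. Applying the analogous analysis to $u$ (whose neighbourhood avoids $V(S_1)$, giving $d^{(2)}(u)\leq n-2-e_1$), combined with the span bound $d^{(3)}(v)\leq n-3$ coming from the presence of two disjoint isolated stars in $L_v$, should push the inequalities into contradiction with the lower bound~\eqref{eq:minrho}. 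Should the pure counting fall short, the backup is to exploit the chains $y_i\leftarrow v$ and $y_i\leftarrow u$ together with the isolated star at $v$ inside $L_u$, aiming to produce a chain that violates Claim~\ref{cl:nolongbacktrack}.
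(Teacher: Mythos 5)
Your existence argument and your ``uniqueness setup'' are sound: the observation that no $2$-edge can join $V(S_1)$ to $V(S')$ (via the two $3$-edges $vwt$ and $vuy$ meeting only in $v$), and the consequences $V_1\not\overset{1}{\leftarrow}u$, $u\notin V(M)$, $u\notin V_1$, are all correct. But the proof is not complete: the ``Final contradiction'' is only a hedged plan, and as sketched it does not visibly close. The inequalities you list (e.g.\ $d^{(2)}(x)\geq |V_1|-1+|V(M)|$ versus $d^{(2)}(x)\leq n-2-e_2$ for $x\in V_1$, or $d^{(2)}(u)\leq n-2-e_1$ together with~\eqref{eq:mindeg}) only yield relations like $|V_1|+|V(M)|\leq n-1-e_2$ and $|M|>\tfrac{n-1}{8}+e_2$, which are perfectly compatible with~\eqref{eq:minrho}; nothing in the sketch pins down $|V_1|$ or $|V(M)|$ sharply enough to clash, and the ``span bound'' $d^{(3)}(v)\leq n-3$ is far above the lower bound $d^{(3)}(v)\gtrsim 3n/8$ coming from Claim~\ref{cl:rhoup}. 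Also, the side claim that in $L_u$ ``each $y_i$ has only the edge $vy_i$'' is not justified at this point (though it is not needed). So as written there is a genuine gap.

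The missing step is exactly the paper's key move, and your own intermediate results make it a one-liner: each tip $y_i$ of $S'$ satisfies $y_i\leftarrow v$, so if some $y_i\notin V_1$, Claim~\ref{cl:posstowlkdwn} gives $V_1\overset{j}{\leftarrow}v$ for some $j\geq 2$, contradicting $v\in V(M)$ via Lemma~\ref{lem:V(M)assshould}; hence every $y_i\in V_1$. But then $vuy_i\in E(H)$ gives $uy_i\in E(H)$ by heredity, contradicting your conclusion that no vertex of $V_1$ sends a $2$-edge to $u$. (The paper finishes instead by noting that the tips of \emph{both} stars lie in $V_1$ and then using the symmetrised links: $vx_1$ and $vy_1$ would both have to be edges of the matching $M$, forcing $x_1=y_1$.) Note that your ``backup'' of violating Claim~\ref{cl:nolongbacktrack} is not quite the right target either — the chain you get has length $2$, and the contradiction is with membership of $v$ in $V(M)$ (Lemma~\ref{lem:V(M)assshould}), not with the length-$3$ bound.
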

		
			We first prove the following statement.
			
			\begin{claim}\label{cl:posstowlkdwn}
                If~$v\in V$ and~$w\notin V_1$ with~$w\leftarrow v$, then~$V_1\overset{i}{\leftarrow}v$ for some~$i\geq 2$.
			\end{claim}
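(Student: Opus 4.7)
The plan is to construct, starting from the given arrow $w\leftarrow v$, a backward chain of $\leftarrow$-arrows that eventually enters $V_1$. Concretely, I would iteratively build a sequence $w_0=w, w_1, w_2,\dots$ with $w_{i+1}\leftarrow w_i$ at each step, continuing as long as the current vertex $w_i$ lies outside $V_1$. The key observation enabling each extension is that $w_i\notin V_1$ means the $2$-edges in $L_{w_i}$ do not form a matching; combined with Corollary~\ref{cor:minmaxrho}~\eqref{it:notriangle}, which forces $L_{w_i}$ to consist only of isolated vertices, isolated $2$-edges, and isolated induced stars with at least two edges, this guarantees that $L_{w_i}$ contains at least one such star. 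Any tip of that star then serves as the next vertex $w_{i+1}$, so that $w_{i+1}\leftarrow w_i$.

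The next step is to argue that this sequence must halt at a vertex in $V_1$. By Claim~\ref{cl:rhodown}, every arrow $w_{i+1}\leftarrow w_i$ forces $\rho(w_{i+1})<\rho(w_i)-\frac{n-1}{8}+\frac{43}{16}$, so $\rho$ strictly decreases by a positive amount at each step once $n$ is sufficiently large. Since $\rho(u)\geq -2$ for every vertex $u\in V$ by definition of $\rho$, the construction cannot continue forever, and the only way for it to halt is that the current vertex already belongs to $V_1$.

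Finally, letting $w_k$ be the first vertex of the sequence lying in $V_1$, we have $k\geq 1$ because $w_0=w\notin V_1$ by hypothesis; prepending $v$ then yields a chain $w_k\leftarrow w_{k-1}\leftarrow \dots \leftarrow w_0\leftarrow v$ of length $k+1\geq 2$, witnessing $V_1\overset{k+1}{\leftarrow}v$ as required. The only real subtlety is keeping the direction of the arrow straight when invoking Claim~\ref{cl:rhodown}, since that claim asserts the tip has strictly smaller $\rho$ than the vertex it points into; this is precisely what drives the monotonicity, and no other obstacle is anticipated.
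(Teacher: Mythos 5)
Your proof is correct and rests on the same mechanism as the paper's: each step out of $V_1$ produces a tip of a star, and Claim~\ref{cl:rhodown} forces $\rho$ to drop by at least $\tfrac{n-1}{8}-\tfrac{43}{16}>0$, so the descent must terminate in $V_1$ and prepending $v$ gives a chain of length at least $2$. The only difference is presentational: you build the chain directly and stop it via the lower bound $\rho\geq -2$, whereas the paper phrases the same descent as a minimal-counterexample argument on $\rho(w)$.
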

		
			\begin{proof}
                Assume that the statement is not true and let~$v$ and~$w$ be a counterexample where~$\rho(w)$ is minimal.
				As~$w\notin V_1$, there is some vertex with~$2$-degree at least~$2$ in~$L_w$ and, therefore, some~$w'\in V$ with~$w'\leftarrow w$.
				Note that by Claim~\ref{cl:rhodown}, we have~$\rho(w')<\rho(w)$.
				If~$w'\in V_1$, then~$V_1\overset{2}{\leftarrow}v$, contrary to our assumption that~$v$ and~$w$ are a counterexample to the statement.
				Thus,~$w'\notin V_1$ and since~$\rho(w)$ was chosen to be minimal among all counterexamples, we have that~$w$ and~$w'$ are not a counterexample, meaning that there is some~$i\geq 2$ such that~$V_1\overset{i}{\leftarrow}w$.
				But then~$V_1\overset{i+1}{\leftarrow}v$, contrary to our assumption.
			\end{proof}
		
			Let us now prove Lemma~\ref{lem:V2vtsonly1star}.
   
			\begin{proof}[Proof of Lemma~\ref{lem:V2vtsonly1star}]
				First, note that a vertex in~$V(M)$ must have a ($2$-uniform) star with at least two edges in its link for otherwise it would be in~$V_1$.
				To show that~$v$ has at most one ($2$-uniform) star with at least two edges in its link, assume the opposite.
				Then there are~$x_i,y_i\in V$ for~$i\in[3]$ such that~$x_1\neq y_1$ and~$x_1x_2,x_1x_3,y_1y_2,y_1y_3\in E(L_v)$.

                First, let us observe that~$x_2,x_3,y_2,y_3\in V_1$.
                Indeed, assume that one of~$x_2,x_3,y_2,y_3$ is not contained in~$V_1$, say, without loss of generality,~$x_2$.
                By Claim~\ref{cl:posstowlkdwn} and the fact that~$x_2\leftarrow v$, we would then have~$V_1\overset{i}{\leftarrow} v$ for some~$i\geq 2$.
				But, by Lemma~\ref{lem:V(M)assshould}, this contradicts~$v\in V(M)$.
                Therefore,~$x_2,x_3,y_2,y_3\in V_1$.
				But then~$vx_1\in E(L_{x_2})$ and~$vy_1\in E(L_{y_2})$ contradict the facts that~$x_2$ and~$y_2$ have the same link and~$v$ lies in a matching edge in this link.
				
				Lastly, let us show that~$w$ is the centre of a star with at least two edges.
				Recall that the vertices in~$V_1$ are symmetrised, so that, in particular, every vertex in~$V_1$ forms an edge with~$w$ in~$L_v$.
				Hence, it is enough to show that~$\vert V_1\vert\geq 2$.
				For this consider a vertex~$a\in V(M)$ and 
                recall that 
                there must be a~$2$-uniform star with at least two edges in~$L_a$.
				If one of the leaves in this star is not in~$V_1$, then Claim~\ref{cl:posstowlkdwn} implies that~$V_1\overset{i}{\leftarrow}a$ for some~$i\geq 2$.
                But, by Lemma~\ref{lem:V(M)assshould}, this would  contradict~$a\in V(M)$.
                Thus, we indeed have~$\vert V_1\vert\geq 2$ and~$w$ is the centre of a star with at least two edges.
			\end{proof}

            \begin{remark}
                In the remainder of this subsection, we will show how to symmetrise the pairs that form edges in $M$. This is not strictly necessary for the proof, so the reader may already jump to the next subsection. However, we believe that this technique of iterative partial symmetrisation, where we symmetrise certain vertex subsets and certain subsets of the pairs of vertices at appropriate junctures, has the potential to be more broadly useful in the study of hypergraph Tur\'an problems, so we decided to include it regardless.
            \end{remark}
    
			We symmetrise the pairs that form edges in~$M$ by essentially replacing any pair by a pair of larger degree as long as there still exist pairs which do not `look the same'.
			More precisely, for distinct~$x,y\in V$, we set
            \begin{align*}
                E_x(x,y)=&\{e\setminus x:e\in E(H),e\cap\{x,y\}=x\}\,, \\
                E_y(x,y)=&\{e\setminus y:e\in E(H),e\cap\{x,y\}=y\}\,,\text{ and }\\
                E_{xy}(x,y)=&\{e\setminus \{x,y\}:e\in E(H),e\cap\{x,y\}=\{x,y\}\}.
            \end{align*}
            When the pair~$x,y$ is clear from  context we will omit the~$`(x,y)$' from the notation.
            If there are~$xy,x'y'\in M$ such that~$E_x\neq E_{x'}$, $E_y\neq E_{y'}$, or~$E_{xy}\neq E_{x'y'}$, we perform the following operation on~$H$.
            Without loss of generality, assume that
            \begin{align}\label{eq:biggerpair}
                \vert E_x\vert+\vert E_y\vert+\vert E_{xy}\vert\geq\vert E_{x'}\vert+\vert E_{y'}\vert+\vert E_{x'y'}\vert.
            \end{align}
            Delete the vertices~$x'$ and~$y'$ and all edges containing at least one of them (these are precisely $E_{x'}$, $E_{y'}$, and~$E_{x'y'}$).
            Then add vertices~$u$ and~$v$ as well as the sets of edges
            \begin{align*}
                E_u^+=\{e\cup u:e\in E_x\}\,, \quad
                E_v^+=\{e\cup v:e\in E_y\}\,, \quad\text{and} \quad
                E_{uv}^+=\{e\cup \{u,v\}:e\in E_{xy}\}
            \end{align*}
            and call the resulting simplicial complex~$H'$.
            Intuitively, this means that we replace the pair~$x'y'$ by a copy of the pair~$xy$. 
            We will say that~$u$ is a copy of~$x$ and that $v$ is a copy of~$y$.\footnote{Observe, however, that~$u$ is~\emph{not} a copy of~$x$ in the usual sense. 
            For instance, there are no edges containing both~$u$ and~$y$.}
            To see that this operation is well defined, note that no edge in~$E_x$, $E_y$, or~$E_{xy}$ contains a vertex from the pair~$x'y'$ and, therefore, we never try to add new edges containing the deleted vertices $x'$ and $y'$. 
			Indeed, if, for example, $x'$ is contained in an edge of $E_x$, then $x$ and $x'$ are in a common edge of $H$, so, since $H$ is hereditary,~$xx'\in E(H)$. But then, since~$xy,x'y'\in M$, 
            any vertex in~$V_1$ together with~$x,x',y,y'$ forms a copy of~$B$, a contradiction.
            Note that, because of~\eqref{eq:biggerpair}, we do not decrease the number of edges, i.e., we have~$\vert E(H')\vert\geq \vert E(H)\vert$ (and~$\vert V(H')\vert=\vert V(H)\vert=n$).
			
            We now show that this symmetrisation does not introduce a copy of~$B$, i.e.,~$B\not\subseteq H'$.
			Indeed, assume that~$B\subseteq H'$ on some vertices~$v_1,\dots,v_5$ with~$v_1v_2v_3,v_1v_4v_5,v_3v_4\in E(H')$.
			Since~$B\notin H$, we have~$\{x,y\}\cap\{v_1,\dots,v_5\}\neq \emptyset$ and~$\{u,v\}\cap\{v_1,\dots,v_5\}\neq\emptyset$.
   
			First assume that~$x,u\in\{v_1,\dots,v_5\}$ or~$y,v\in\{v_1,\dots,v_5\}$, say, without loss of generality, $x,u\in\{v_1,\dots,v_5\}$.
			Because there is no edge in~$H'$ that contains both a vertex from~$xy$ and one from~$uv$, we may assume  that~$v_2=x$ and~$u\in\{v_4,v_5\}$.
			For the same reason,~$v_1\notin\{x,y,u,v\}$.
            We will now argue that in~$H$ the vertex~$v_1$ is the centre of a~$2$-uniform star with at least two edges in~$L_x$.
            But, since~$v_1\neq y$, this will contradict  Lemma~\ref{lem:V2vtsonly1star}. 
            If~$\{u,v\}=\{v_4,v_5\}$, then~$v_3v_4\in E(H')$ implies that~$v_3\neq y$.
            Recalling that~$uv$ is a copy of~$xy$, we see that~$v_1xy\in E(H)$, whereby~$v_1\neq y$ is the centre of a~$2$-uniform star with at least two edges in~$L_x$ (namely,~$v_1v_3$ and~$v_1y$).
			This yields the aforementioned contradiction and so we infer that~$v\notin\{v_4,v_5\}$.
			But then, again using that~$u$ is a copy of~$x$, we see that~$v_1\neq y$ is still the centre of a~$2$-uniform star with at least two edges in~$L_x$ (namely,~$v_1v_3$ and~$v_1z$, where~$z\in\{v_4,v_5\}\setminus u$).
            Once again, this yields a contradiction.
			
			The remaining cases are where~$x,v\in\{v_1,\dots,v_5\}$ and~$y,u\notin\{v_1,\dots,v_5\}$ or~$y,u\in\{v_1,\dots,v_5\}$ and~$x,v\notin\{v_1,\dots,v_5\}$. Without loss of generality, we may assume that~$x,v\in\{v_1,\dots,v_5\}$ and~$y,u\notin\{v_1,\dots,v_5\}$.
			As before, we may also assume that~$v_2=x$ and~$v\in\{v_4,v_5\}$ and 
		we still know             
            that~$v_1\notin\{x,y,u,v\}$.
			From~$u\notin\{v_1,\dots,v_5\}$, it follows that for~$z\in\{v_4,v_5\}\setminus v$, we have~$z\neq u$ and so, since~$v$ is a copy of~$y$ and~$v_1vz\in E(H')$, we have that~$v_1yz\in E(H)$.
			But then~$y\notin\{v_1,\dots,v_5\}$ and~$xy\in E(H)$ yield~$B\subseteq H[v_1,x,v_3,y,z]$, a contradiction.

            Thus,~$H'$ is a~$B$-free simplicial complex on~$n$ vertices with at least as many edges as~$H$.
            We repeat this operation iteratively so long as there are~$xy,x'y'\in M$ such that~$E_x\neq E_{x'}$, $E_y\neq E_{y'}$, or~$E_{xy}\neq E_{x'y'}$.
            We may therefore indeed assume that in $H$ the pairs of $M$ are 
            `symmetrised', i.e., for all~$xy,x'y'\in M$, we have~$E_x=E_{x'}$,~$E_y=E_{y'}$, and~$E_{xy}=E_{x'y'}$.
			
		\subsection*{The vertices outside~$V_1\cup V(M)$}
            Observe that~$H[V_1\cup V(M)]$ already has the same structure as Example~\ref{ex:extremalbowtie}, except perhaps for the orders of~$V_1$ and $V(M)$.
			In this subsection, we conclude the proof by showing that~$V_3:=V\setminus (V_1\cup V(M))=\emptyset$.
			First note that, for every vertex~$v\in V_3$, we have
			\begin{align}\label{eq:bcktrckV3}
				V_1\overset{2}{\leftarrow} v\,.
			\end{align}
			Indeed, since~$v\notin V_1$, there is a tip of a star in~$L_v$, so  Claim~\ref{cl:posstowlkdwn} gives~$V_1\overset{i}{\leftarrow}v$ for some~$i\geq 1$.
			Since~$v\notin V(M)$,~$v$ is not contained in a~$3$-edge together with any vertex in~$V_1$ and so~$i\geq 2$.
            Lastly, Claim~\ref{cl:nolongbacktrack} yields~$i=2$.
			
			Now we show that there are no~$3$-edges with all vertices in~$V_3$.
			
			\begin{claim}\label{cl:no3edginV3}
				$E\cap V_3^{(3)}=\emptyset$.
			\end{claim}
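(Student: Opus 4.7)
The plan is to assume for contradiction that some 3-edge $abc \in E\cap V_3^{(3)}$ exists and derive a contradiction by combining the backtracking obstruction of Claim~\ref{cl:nolongbacktrack} with the $\rho$-propagation estimate of Claim~\ref{cl:rhoup}. The first step is to observe that $bc$ is a 2-edge of $L_a$, so by Corollary~\ref{cor:minmaxrho}~\eqref{it:notriangle} it is either isolated in $L_a$ or lies inside an isolated induced 2-uniform star with at least two edges.

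In the star case, the center of the star must be $b$ or $c$; by symmetry between $b$ and $c$, assume the center is $b$, so that $c$ is a tip of the star and hence $c\leftarrow a$. Since $c\in V_3$, the relation~\eqref{eq:bcktrckV3} gives $V_1\overset{2}{\leftarrow}c$, and composing with $c\leftarrow a$ produces $V_1\overset{3}{\leftarrow}a$, contradicting Claim~\ref{cl:nolongbacktrack}. Hence $bc$ must actually be isolated in $L_a$. Running the same argument with the roles of $a$ and $b$ swapped forces $ac$ to be isolated in $L_b$ as well.

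With both edges isolated, Claim~\ref{cl:rhoup} applied to $(v,w,x)=(a,b,c)$ gives $\rho(b)>\rho(a)+\frac{n-1}{8}-\frac{27}{16}$, while applied to $(v,w,x)=(b,a,c)$ it gives the reverse inequality $\rho(a)>\rho(b)+\frac{n-1}{8}-\frac{27}{16}$. Adding these yields $0>\frac{n-1}{4}-\frac{27}{8}$, which fails for $n$ sufficiently large, producing the desired contradiction.

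The main obstacle is confined to the star case of the initial dichotomy: one must verify that it is really one of $b,c$ (rather than some ancillary vertex of the star) that plays the role of the tip $w$ in the definition of $\leftarrow$, so that its $V_3$-membership can be leveraged through~\eqref{eq:bcktrckV3}. This is however automatic, since a 2-uniform star with at least two edges in $L_a$ has a single center, and $b,c$ are the two endpoints of the edge $bc$ so one of them is necessarily a tip. Once this step is in place, the remainder of the argument is a short symmetric application of Claim~\ref{cl:rhoup}.
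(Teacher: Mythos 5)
Your proof is correct and uses essentially the same ingredients as the paper's: the dichotomy from Corollary~\ref{cor:minmaxrho}~\eqref{it:notriangle}, the combination of~\eqref{eq:bcktrckV3} with Claim~\ref{cl:nolongbacktrack} to kill the star case, and the two opposing applications of Claim~\ref{cl:rhoup} to kill the isolated case. The only difference is the order of the case analysis (you eliminate the star configuration first, the paper eliminates two isolated pairs first), which is an inessential reorganization.
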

			
			\begin{proof}
				Assume that~$abc\in E$ with~$a,b,c\in V_3$.
                Clearly, $ab\in L_c$ and, by Corollary~\ref{cor:minmaxrho}~\eqref{it:notriangle}, in~$L_c$ the pair~$ab$ is either an isolated $2$-edge or part of an isolated induced~($2$-uniform) star with at least two edges.
                A similar conclusion holds for the two other pairs in~$\{a,b,c\}$. 
                Suppose that two pairs of vertices, say $ab$ and~$ac$, appear as isolated~$2$-edges in $L_c$ and~$L_b$, respectively.
                Then, by  Claim~\ref{cl:rhoup}, this would imply that~$\rho(c)<\rho(b)$ and~$\rho(b)<\rho(c)$, respectively, which is a contradiction.
				Therefore, at least one of the pairs in~$\{a,b,c\}$ appears as part of an isolated induced ($2$-uniform) star with at least two edges in the relevant link. 
                But this means that at least one of the vertices~$a,b,c$, say~$a$, appears as a tip of a star in the link of one of the other two, say~$b$. 
                That is, $a\leftarrow b$. 
                But then $V_1\overset{2}{\leftarrow} a\leftarrow b$, so that~$V_1\overset{3}{\leftarrow}b$,  contradicting Claim~\ref{cl:nolongbacktrack}.
			\end{proof}
		
			Next, we show that if~$V_3$ is non-empty, then~$V(M)$ must be very large.
			\begin{claim}\label{cl:V2largeifV3noem}
				If~$V_3\neq\emptyset$, then~$\vert V(M)\vert>n-\frac{21}{2}$.
			\end{claim}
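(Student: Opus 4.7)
The plan is to pick $v \in V_3$, use chained applications of Claim~\ref{cl:rhodown} to bound $d^{(3)}(v)$ from below, and then argue structurally that the $2$-graph of $L_v$ consists of pairwise vertex-disjoint isolated $2$-edges each with both endpoints in $V(M)$. The claim will then follow immediately from vertex-disjointness, since $|V(M)|\geq 2\,d^{(3)}(v)>n-\tfrac{21}{2}$.

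By~\eqref{eq:bcktrckV3}, $V_1 \overset{2}{\leftarrow} v$, so pick $v_0 \in V_1$ and $v_1$ with $v_0 \leftarrow v_1 \leftarrow v$; by Lemma~\ref{lem:V(M)assshould}, $v_1 \in V(M)$. Starting from~\eqref{eq:minrho} and applying Claim~\ref{cl:rhodown} twice gives $\rho(v_1) > \tfrac{3(n-1)}{8} - \tfrac{65}{16}$, and one more application to $v_1 \leftarrow v$ yields $d^{(2)}(v) \leq n - \rho(v_1) < \tfrac{10n + 71}{16}$. Combined with~\eqref{eq:mindeg}, this produces $d^{(3)}(v) = e^{(2)}(L_v) > \tfrac{n}{2} - \tfrac{21}{4}$. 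Corollary~\ref{cor:minmaxrho}\eqref{it:notriangle} says the $2$-graph of $L_v$ is a star forest; Claim~\ref{cl:no3edginV3}, together with the fact that for $v' \in V_1$ the $2$-edges of $L_{v'}$ form the matching $M \subseteq V(M)^{(2)}$, forces its vertices to lie in $(V(M) \cup V_3) \setminus \{v\}$ and every $2$-edge to have at least one endpoint in $V(M)$.

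The key structural step is to rule out any $2$-edge of $L_v$ with a $V_3$-endpoint, which would in particular eliminate all stars and all mixed iso $2$-edges. Suppose some $cw \in L_v$ has $c \in V_3$ and $w \in V(M)$. Passing to $L_w$, the $2$-edge $vc$ cannot sit inside the unique star centred at the $M$-partner of $w$ (by Lemma~\ref{lem:V2vtsonly1star}, since neither $v$ nor $c$ is in $V_1$), so $vc$ is iso in $L_w$. The symmetrisation of pairs in $M$ ensures the iso $2$-edges of $L_u$ coincide for every $u \in V(M)$, so $vc\ell \in H$ for every $\ell \in V(M)$; consequently $c$ becomes the centre of a star in $L_v$ with all of $V(M)$ as its leaves, and the same symmetrisation prevents any further $2$-edge component in $L_v$, leaving $L_v$ as a single huge star. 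A short case analysis then contradicts~\eqref{eq:mindeg}: with $L_v$ reduced to that single star, Claim~\ref{cl:rhodown} applied at a leaf $\ell\in V(M)$ (whose $\rho$ is boosted by the iso edge $vc\in L_\ell$) forces $d^{(2)}(v)$ too small for $d(v)=d^{(2)}(v)+|V(M)|$ to meet the required minimum. Hence every $2$-edge of $L_v$ is iso and lies entirely in $V(M)$.

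The principal obstacle is exactly this single-star elimination: making the contradiction with~\eqref{eq:mindeg} quantitatively airtight requires careful bookkeeping that combines the symmetrisation of $M$-pairs, Claim~\ref{cl:rhodown}, and the minimum-degree inequality; one likely needs to iterate the symmetrisation argument on the centre $c$ (whose link $L_c$ inherits an analogous rigid structure) to rule out partial-collapse configurations.
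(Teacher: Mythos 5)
Your lower bound is fine and matches the paper: from \eqref{eq:bcktrckV3}, \eqref{eq:minrho} and Claim~\ref{cl:rhodown} one gets $d^{(3)}(v)>\tfrac{n}{2}-\tfrac{21}{4}$ for $v\in V_3$. The problem is the upper-bound half. Your target structure --- that the $2$-edges of $L_v$ are isolated edges with \emph{both} endpoints in $V(M)$ --- is impossible. A $2$-edge of $L_v$ cannot have both endpoints in $V(M)$: if they lie in distinct edges of $M$, the $2$-edge between them (heredity) together with the two $3$-edges through any vertex of $V_1$ gives a copy of $B$; if they form a single edge $xy\in M$, then $vxy\in E$ makes $v$ a leaf of the unique star in $L_x$ (Lemma~\ref{lem:V2vtsonly1star}), and leaves lie in $V_1$ (Claim~\ref{cl:posstowlkdwn} and Lemma~\ref{lem:V(M)assshould}), contradicting $v\in V_3$. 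Also no endpoint can lie in $V_1$ (else $v\in V(M)$) and not both in $V_3$ (Claim~\ref{cl:no3edginV3}). So \emph{every} $2$-edge of $L_v$ has exactly one endpoint in $V(M)$ and one in $V_3$; the ``mixed'' edges you try to eliminate are the only ones there are, and eliminating them would force $d^{(3)}(v)=0$, contradicting your own lower bound. Hence the intended conclusion $|V(M)|\ge 2d^{(3)}(v)$ cannot be reached by this route.

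The elimination step itself also does not go through as sketched. The symmetrisation only guarantees $E_x=E_{x'}$ within the \emph{same} role of each pair, so from $vc$ being isolated in $L_w$ you only get $vc\ell\in E$ for the $|M|$ vertices $\ell$ playing $w$'s role, not for all of $V(M)$; nothing you cite rules out a second star in $L_v$ keyed to the other role class, and the claimed contradiction with \eqref{eq:mindeg} is quantitatively borderline ($d^{(2)}(v)\lesssim \tfrac{5n}{8}$ plus $d^{(3)}(v)\le\tfrac{|V(M)|}{2}\le\tfrac n2$ sums to roughly the minimum degree), which you acknowledge is not settled. The paper instead proves the upper bound $d^{(3)}(v)\le |M|=|V(M)|/2$ directly: every $3$-edge at $v$ has its third vertex in $V_3$ and meets exactly one vertex of $V(M)$; for each $x\in V(M)$ there is at most one such $3$-edge (it appears as an isolated $2$-edge of $L_x$); and $v$ cannot be in $3$-edges with both vertices of a matching pair, since that yields a copy of $B$ (directly, or via a vertex of $V_1$ if the two third vertices coincide). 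Combining this with the lower bound gives $|V(M)|>n-\tfrac{21}{2}$; numerically your final inequality coincides with this, but the structural justification you propose for it is not available.
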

			
			\begin{proof}
				Let~$v\in V_3$.
				If there is a~$3$-edge~$uvw\in E$ with~$u\in V_1$, then~$vw$ is an edge in~$L_u$ and since~$L_u$ only contains isolated~$2$-edges, this would give~$v\in V(M)$, a contradiction.
				Together with Claim~\ref{cl:no3edginV3}, this implies that every~$3$-edge containing~$v$ contains a vertex of~$V(M)$.
				On the other hand, Lemma~\ref{lem:V2vtsonly1star} states that for every~$xy\in M$,~$L_x$ contains exactly one ($2$-uniform) star with at least two edges and that the centre of this star is~$y$.
				The leaves of this star are all in~$V_1$ by Claim~\ref{cl:posstowlkdwn} and Lemma~\ref{lem:V(M)assshould}.
				Since~$v\notin V_1$, this implies that if~$v$ lies in a~$3$-edge with~$x$, then it lies in exactly one such $3$-edge (which appears as an isolated~$2$-edge in~$L_x)$.
                Note also that any~$3$-edge in~$H$ intersects at most one~$xy\in M$, since~$H$ does not contain any~$2$-edge between two vertices in distinct~$xy,x'y'\in M$.
				
				Recall that~$vxy$ cannot be an edge (because then,  in~$L_x$,~$v$ would be a leaf of the star centred in~$y$).
				Hence, assuming that~$v$ lies in a~$3$-edge with each of~$x$ and~$y$, there are~$u,w\in V_3$ such that~$uvx,wvy\in E$.
				If~$u\neq w$, we directly obtain a copy of~$B$.
				If~$u=w$, we obtain a copy of~$B$ by using a vertex in~$V_1$ (for instance, with~$x$ being the vertex that is contained in both~$3$-edges of this copy of~$B$).
				Therefore,~$v$ cannot lie in a~$3$-edge with both~$x$ and~$y$.
				Combining this with the above, we obtain that
				\begin{align}\label{eq:3degV3up}
					d^{(3)}(v)\leq \vert V(M)\vert/2\,.
				\end{align}
				On the other hand, by~\eqref{eq:minrho}, we have that~$\rho(a)>\frac{2(n-1)}{8}-\frac{11}{8}$ for all~$a\in V_1$, so that~$d^{(3)}(v)>\frac{4(n-1)}{8}-\frac{19}{4}$ by~\eqref{eq:bcktrckV3} and two applications of Claim~\ref{cl:rhodown}.
				Together with~\eqref{eq:3degV3up}, this yields the claim.
			\end{proof}
			
			Now we can finish the proof.
			Since there is a vertex in~$V_1$ by Corollary~\ref{cor:minmaxrho}~\eqref{it:minrho}, the minimum degree condition~\eqref{eq:mindeg} implies that there is some~$vw\in M$.
            Claim~\ref{cl:rhoup} and~\eqref{eq:minrho} together imply that~$d^{(3)}(v)>\frac{3(n-1)}{8}-\frac{17}{8}$.

            However, we can get an upper bound as follows.
            Recall that by Corollary~\ref{cor:minmaxrho}~\eqref{it:notriangle}, the~$2$-edges in~$L_v$ are isolated~$2$-edges and isolated induced~$2$-uniform stars with at least two edges.
            Lemma~\ref{lem:V2vtsonly1star} yields that there is exactly one star with at least two edges, its centre is~$w$ and, by Claim~\ref{cl:posstowlkdwn} and Lemma~\ref{lem:V(M)assshould}, its leaves are precisely the vertices in~$V_1$.
            Hence, the~$2$-edges in~$L_v$ are precisely given by a star with at least two edges and centre~$w$ whose leaves are precisely the vertices in~$V_1$ as well as a matching of isolated~$2$-edges all of whose vertices are in~$V_3$.
            Therefore, we have 
			\begin{align}\label{eq:degV(M)upprbnd}
				d^{(3)}(v)\leq \vert V_1\vert+\frac{\vert V_3\vert}{2}\leq n-\vert V(M)\vert\,.
			\end{align}
			If~$V_3\neq\emptyset$, \eqref{eq:degV(M)upprbnd} and Claim~\ref{cl:V2largeifV3noem} yield~$d^{(3)}(v) \leq \frac{3(n-1)}{8}-\frac{17}{8}$, which contradicts the lower bound above.
			Hence,~$V_3=\emptyset$, which means that~$H$ looks like Example~\ref{ex:extremalbowtie} up to the sizes of the partition classes.
			A simple optimisation and the extremality of~$H$ then yields that~$H$ is the hypergraph in Example~\ref{ex:extremalbowtie}.

    \section{Proof of Theorem~\ref{thm:match}}\label{sec:proofmatch}
        For the lower bound, take an $n$-vertex simplicial complex containing all pairs as~$2$-edges and an intersecting family of~$3$-edges. 
        The total number of edges is~$1+n+\binom{n}{2}+\binom{n-1}{2} = n^2-n+2$.
        
        For the upper bound, suppose~$C$ is sufficiently large.
        We will show by induction on~$n$ that every simplicial complex on~$n$ vertices with at least~$n^2-n+C$ edges contains a copy of~$F$.
        
        The base case of the induction follows simply from~$C$ being large.
        Now assume that for every~$m\leq n-1$ every simplicial complex on~$m$ vertices with at least~$m^2-m+C$ edges contains a copy of~$F$ and let~$H$ be a simplicial complex on~$n$ vertices with at least~$n^2-n+C$ edges.
        Since~$C$ is large, we can assume that~$n$ is large as well. 
        First note that if there is a vertex~$v$ with~$d(v)\leq 2n -3$, then 
        $$e(H-v)\geq n^2-n+C -(2n-3)-1 = (n-1)^2-(n-1)+C\,,$$ so that, by induction,~$H-v$ and, thus,~$H$ contain a copy of~$F$.
        We may therefore assume that~$d(v)\geq 2n-2$ for every~$v\in V(H)$.
        
        Next we want to argue that if~$H$ contains two intersecting~$4$-edges, then it contains a copy of~$F$.
        Let us therefore assume that~$H$ contains an edge~$e=wxyz$.
        Then it is easy to see that if any~$3$-edge intersects~$e$ in exactly one vertex, then~$H$ contains a copy of~$F$.
        In particular, this implies that if any other~$4$-edge intersects~$e$, it does so in exactly three vertices.
        Assume that there is some such~$e'=w'xyz$.
        Then~$w'$ (and~$w$) cannot be contained in any~$3$-edge that contains a vertex outside of~$e\cup e'$.
        But this yields that~$d(w')\leq 1+n-1+6\leq n+6$, a contradiction.
        Thus, no two~$4$-edges intersect, which also implies that there are no edges of size larger than~$4$ and that every vertex is contained in at most one~$4$-edge.
        
        
        We now show that $d^{(3)}(v)\leq 3n$ for  all~$v\in V(H)$.
        Let~$v\in V(H)$ be arbitrary and let~$a\in V(H)$ be such that~$va\in E(H)$.
        We first show that there is a~$3$-edge containing~$a$ but not~$v$. 
        This follows directly if~$a$ is contained in a~$4$-uniform edge, so we may assume that~$d^{(4)}(v)=0$.
        Therefore, we have
        $$d^{(3)}(a) = d^{(\geq2)}(a)-d^{(2)}(a)\geq 2n-2-(n-1)\geq n-1\,,$$ 
        so there must be a~$3$-edge~$e$ containing~$a$ but not~$v$.
        But then every~$3$-edge that contains~$v$ has to intersect~$e$, so that~$d^{(3)}(v)\leq 3n$.
                
        We now fix $v \in V(H)$ and let~$\alpha = \frac{|N(v)|}{n}<1$.
        Consider some~$3$-edge~$uvw$ and note that every~$3$-edge of~$H$ which intersects~$N(v)$ has to intersect~$uvw$.
        Thus, the above bound on the~$3$-degree applied to~$u$, $v$, and $w$ in turn yields that there are at most~$9n$ such edges.
        Therefore, 
        \begin{align*}
            e(H\!-\!N(v)) 
            \!&\geq
            n^2-n+C -\Big( 2\alpha n + \binom{\alpha n}{2}+ \alpha(1-\alpha) n^2 +9n\Big)\\
            \!&\geq\! \Big(1-\alpha(1-\alpha)-\frac{\alpha^2}{2}\Big)n^2 - (1+2\alpha +9)n +C 
            \!\geq\!
            \big((1-\alpha)n\big)^2 - (1-\alpha)n+C.
        \end{align*}
        The result therefore follows by induction.
	
     \section{Proof of Theorem~\ref{thm:2connvtxgeneral}}\label{sec:gen}
    We will actually prove the following more general theorem, of which Theorem~\ref{thm:2connvtxgeneral} is a special case.
        
    \begin{theorem}\label{thm:2connvtxgeneral-2}
        For every proper~$3$-graph~$F$,~$A\subseteq V(F)$ and~$\varepsilon>0$, there is~$n_0 \in \mathbb{N}$ such that the following holds.
	    If a simplicial complex~$H$ on~$n\geq n_0$ vertices satisfies
	    \begin{align}\label{eq:condonmaxdeg}
	        \frac{e^3 n^{n-2}}{(n+3)^n}\big[\ex^{(3)}(n,F)+n^2\big]\geq\Big[\frac{(\vert A\vert-1)}{\vert A\vert}+\frac{1}{n}\frac{v(F)-\vert A\vert}{\vert A\vert}\Big]\big[1+\sum_{i\geq 2}\frac{\Delta^{(i+1)}}{i}\big]\,,
	   \end{align}
	   where~$\Delta^{(i)}=\max_{xy\in V(H)^{(2)}}d^{(i)}(xy)$,  and~$e(H)\geq(1+\varepsilon)\big[\ex^{(3)}(n,F)+\binom{n}{2}\big]$, then~$H$ contains a copy of~$F^+$ if either:
	   \begin{enumerate}
	       \item\label{it:nolargeedge} $H$ does not contain any edge of size~$4$ or
	       \item\label{it:Fin4edge} $F$ is contained in a~$4$-edge.
	   \end{enumerate}
	\end{theorem}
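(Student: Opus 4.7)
My plan is to argue by contradiction: assume $H$ satisfies the hypotheses but contains no copy of $F^+$, and derive an upper bound on $e(H)$ that violates $e(H)\ge(1+\varepsilon)\bigl[\ex^{(3)}(n,F)+\binom{n}{2}\bigr]$.

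The cornerstone is a \emph{link-closure observation}: for every vertex $a\in V(H)$, the induced $3$-graph $H_3\bigl[N^{(2)}(a)\bigr]$ is $F$-free. Indeed, a copy of $F$ on some $V'\subseteq N^{(2)}(a)$ has its $A$-image inside $N^{(2)}(a)$, so $a$ is $2$-adjacent to every vertex of that image; since $a\notin V'$, appending $a$ together with these $2$-edges produces a copy of $F^+$ in $H$, contradicting our standing assumption. Hence
\[
e^{(3)}\bigl(H_3[N^{(2)}(a)]\bigr)\le \ex^{(3)}\bigl(d^{(2)}(a),F\bigr)\qquad\text{for every }a\in V(H).
\]
In case~\eqref{it:Fin4edge} the identical argument at the $4$-level forbids any $4$-edge of $H$ from lying inside $N^{(2)}(a)$, since $F$ then embeds in any $4$-simplex.

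From here I would double count the pairs $(a,T)$ with $T\in E^{(3)}(H)$, $a\notin T$, and $T\subseteq N^{(2)}(a)$. Fixing $a$ and using the link-closure bound gives the upper estimate $\sum_a \ex^{(3)}(d^{(2)}(a),F)\le n\cdot\ex^{(3)}(n,F)$ for the total count. Fixing $T=\{x,y,z\}$ instead, the valid vertices $a$ are precisely those in $\bigcap_{v\in T}N^{(2)}(v)\setminus T$, and the complement of this set in $V\setminus T$ has size at most $\sum_{v\in T}(n-1-d^{(2)}(v))$; converting these $2$-codegree deficits into higher-uniformity codegree data via $d^{(3)}(v)\le\tfrac{n-1}{2}\Delta^{(3)}$ (and iterating this with the $\Delta^{(i+1)}$ for $i\ge 3$ in case~\eqref{it:Fin4edge}) turns this into a lower estimate for the incidence count proportional to $\tfrac{|A|-1}{|A|}(n-v(F))\bigl[1+\sum_{i\ge 2}\Delta^{(i+1)}/i\bigr]^{-1}$. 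Here the ratio $\tfrac{|A|-1}{|A|}$ reflects the fact that at most $|A|-1$ of the $|A|$ required $2$-edges from $a$ to $\phi(A)$ can be missing for $a$ to fail to extend $T$. Combining the two estimates yields an upper bound on $e^{(3)}(H)$ of the shape predicted by~\eqref{eq:condonmaxdeg}, so that adding $e^{(0)}(H)+e^{(1)}(H)+e^{(2)}(H)\le 1+n+\binom{n}{2}$, together with the analogous estimate for $e^{(\ge 4)}(H)$ in case~\eqref{it:Fin4edge}, forces $e(H)<(1+\varepsilon)[\ex^{(3)}(n,F)+\binom{n}{2}]$, the desired contradiction.

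The main obstacle, I expect, is matching the precise constants in~\eqref{eq:condonmaxdeg}: the unusual coefficient $\tfrac{e^{3}n^{n-2}}{(n+3)^n}$ on its left-hand side and the correction $\tfrac{v(F)-|A|}{n|A|}$ on its right suggest that the clean inclusion–exclusion sketched above must be replaced by a sharper probabilistic argument -- perhaps sampling $a$ from a distribution weighted by $d^{(2)}(a)$, or selecting a uniformly random $v(F)$-set and estimating the conditional probability that it lies inside $N^{(2)}(a)$ -- so that the relevant expectation evaluates to precisely the left-hand side of~\eqref{eq:condonmaxdeg}. This is where properness of $F$ enters: it guarantees that averages of $\ex^{(3)}(m,F)$ over $m$ close to $n$ compare favourably with $\ex^{(3)}(n,F)$ itself, keeping the overhead in the lower-bound step lower-order.
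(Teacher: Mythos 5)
Your opening observation is correct, and it is in fact a strengthening of the only structural step the paper needs: once a single copy of $F$ is found, the absence of $F^+$ forces, by pigeonhole, some vertex $a$ in the image of $A$ to be missing $2$-edges to at least $\frac{n-v(F)}{\vert A\vert}$ vertices, so $d^{(2)}(a)\le\frac{(\vert A\vert-1)n+v(F)-\vert A\vert}{\vert A\vert}$. The genuine gap is in how you propose to turn this into a contradiction. Your incidence count of pairs $(a,T)$ with $T\subseteq N^{(2)}(a)$ only becomes useful if every $3$-edge has many common $2$-neighbours, i.e.\ it needs pointwise lower bounds on $2$-degrees, and the hypotheses supply no such control: they give a global edge count and the codegree maxima $\Delta^{(i)}$, nothing more. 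Absorbing the deficits $n-1-d^{(2)}(v)$ via $d^{(3)}(v)\le\tfrac{\Delta^{(3)}}{2}d^{(2)}(v)$, as you suggest, leaves real slack. For instance, take $F$ with $\ex^{(3)}(n,F)\approx n^2/6$ and $\vert A\vert=2$ (so \eqref{eq:condonmaxdeg} permits $\Delta^{(3)}=2$), and consider a degree profile in which half the vertices have $d^{(2)}(v)\approx 0.9n$ and $d^{(3)}(v)\approx 0.9n$ while the other half have full $2$-degree and $d^{(3)}(v)\approx n/4$: this profile satisfies \eqref{eq:condonmaxdeg}, the edge-count hypothesis, $e^{(3)}\le\Delta^{(3)}e^{(2)}/3$, your upper bound $\sum_a\ex^{(3)}(d^{(2)}(a),F)$ on the incidences and your lower bound on them simultaneously, with no contradiction once $\varepsilon\lesssim 10^{-2}$. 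So the counting as sketched cannot close, and the ``sharper probabilistic argument'' you defer to at the end is precisely the missing proof rather than a refinement of constants.

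The paper closes exactly this gap by induction on $n$: if some vertex has $d(v)\le\frac{e^3n^n}{(n+3)^n}\big[\frac{\ex^{(3)}(n,F)}{n}+n\big]$ it is deleted, properness of $F$ is used to show that the density hypothesis survives the deletion, and the peculiar factor $\frac{e^3n^{n-2}}{(n+3)^n}$ (which tends to $1$) is bookkeeping that keeps \eqref{eq:condonmaxdeg} valid for $H-v$ with $n-1$ in place of $n$ — it has nothing to do with sampling. With the resulting minimum degree in hand, one copy of $F$ (guaranteed by hypothesis \eqref{it:nolargeedge} or \eqref{it:Fin4edge}), the pigeonhole vertex $a$, and the hereditary bound $d(a)\le d^{(2)}(a)\big[1+\sum_{i\ge2}\frac{\Delta^{(i+1)}}{i}\big]$ contradict \eqref{eq:condonmaxdeg} immediately; no global double counting over all $3$-edges is needed. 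That your outline never invokes properness in any concrete step is a further symptom that this key mechanism — the minimum-degree reduction — is absent from your plan.
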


    To see that Theorem~\ref{thm:2connvtxgeneral-2} indeed implies Theorem~\ref{thm:2connvtxgeneral}, observe that $\big(\tfrac{n}{n+3}\big)^n$ is a decreasing function and tends to $e^{-3}$ as~$n$ tends to infinity.
    Therefore, we have that \eqref{eq:condonmaxdeg1} yields \eqref{eq:condonmaxdeg}.
 
    \begin{proof}[Proof of Theorem~\ref{thm:2connvtxgeneral-2}]
    	Let~$F$ be a proper~$3$-graph,~$A\subseteq V(F)$ and~$\varepsilon>0$.
    	Let~$C$ be a sufficiently large constant ($C^{-1}\ll\varepsilon,1/v(F)$).
    	We will show by induction on~$n$ that if~$H$ is a simplicial complex on~$n\geq v(F^+)$ vertices satisfying~\eqref{eq:condonmaxdeg} and~$e(H)\geq(1+\varepsilon/2)\big[\ex^{(3)}(n,F)+\binom{n}{2}\big]+C$, then~$H$ contains a copy of~$F^+$.
    	It then follows that there is some~$n_0$ such that every simplicial complex~$H$ on~$n\geq n_0$ vertices satisfying~\eqref{eq:condonmaxdeg} and~$e(H)\geq(1+\varepsilon)\big[\ex^{(3)}(n,F)+\binom{n}{2}\big]$ contains a copy of~$F^+$.

     \allowdisplaybreaks

        The base of the induction and the statement for small~$n$ both follow from~$C$ being large.
    	Now let~$n$ be large and assume that for every~$n'\leq n-1$, every simplicial complex~$H'$ on~$n'$ vertices satisfying~\eqref{eq:condonmaxdeg} (with~$n'$ in place of~$n$) and~$e(H')\geq(1+\varepsilon/2)\big[\ex^{(3)}(n',F)+\binom{n'}{2}\big] + C$ contains a copy of~$F^+$.
    	Let~$H$ be a simplicial complex on~$n\geq n_0$ vertices satisfying~\eqref{eq:condonmaxdeg} with~$e(H)\geq(1+\varepsilon/2)\big[\ex^{(3)}(n,F)+\binom{n}{2}\big]+C$.
    	Assume for the sake of contradiction that~$H$ does not contain a copy of~$F^+$.
    	We first derive a minimum degree condition.
    	Suppose that there is a vertex~$v$ in~$H$ with~$d(v)\leq \frac{e^3 n^n}{(n+3)^n}\Big[\frac{\ex^{(3)}(n,F)}{n}+n\Big]$.
        Note that for some fixed~$\eta>0$ and large~$n$, we have~$\frac{(n+3)^n}{e^3 n^n}(1+\varepsilon/2)>1+\eta$.
        By the definition of proper (applied with~$\eta$ instead of~$\varepsilon$) and since~$n$ is large, we therefore have
        \begin{align*}
            \frac{(1+\varepsilon/2)(n+3)^n}{e^3 n^n}\big[\ex^{(3)}(n,F)-\ex^{(3)}(n-1,F)\big]\geq\frac{\ex^{(3)}(n,F)}{n}
        \end{align*}
        or, after rearranging,
        \begin{align*}
            (1+\varepsilon/2)\big[\ex^{(3)}(n,F)-\ex^{(3)}(n-1,F)\big]\geq\frac{e^3 n^n}{(n+3)^n}\frac{\ex^{(3)}(n,F)}{n}\,.
        \end{align*}
        Keeping in mind that~$n$ is large and so~$\frac{e^3 n^n}{(n+3)^n}<1+\frac{\eps}{4}$, this in turn implies that
    	\begin{align*}
            e(H\!-\!v) &\!\geq\!\Big(1+\frac{\eps}{2}\Big)\big[\ex^{(3)}(n,F)+\binom{n}{2}\big]+C-\frac{e^3 n^n}{(n+3)^n}\Big[\frac{\ex^{(3)}(n,F)}{n}+n\Big]-1\\
            &\!\geq\! \Big(1+\frac{\eps}{2}\Big)\big[\ex^{(3)}(n-1,F)+\binom{n-1}{2}\big]+C\,.
        \end{align*}
        In order to apply induction, we must also check that~\eqref{eq:condonmaxdeg} holds for~$H-v$ and~$n-1$ instead of~$H$ and~$n$.
        But, by a double counting argument from~\cite{KNS:64}, we have ~$\frac{\ex^{(3)}(n-1,F)}{n-3}\geq\frac{\ex^{(3)}(n,F)}{n}$, so, 
        since~$n$ is large, we have
        \begin{align*}
            &\frac{e^3 (n-1)^{n-3}}{(n+2)^{n-1}}\big[\ex^{(3)}(n-1,F)+(n-1)^2\big]\\
            &=\frac{e^3 (n-1)^{n-3}(n-3)}{(n+2)^{n-1}}\frac{\ex^{(3)}(n-1,F)}{n-3}+\frac{e^3 (n-1)^{n-1}(n+3)^n}{(n+2)^{n-1}e^3 n^n}\frac{e^3 n^n}{(n+3)^n}\\
            &\geq \frac{e^3 (n-1)^{n-3}(n-3)}{(n+2)^{n-1}}\frac{\ex^{(3)}(n,F)}{n}+\frac{n}{n-1}\frac{e^3 n^n}{(n+3)^n}\\
            &\geq \frac{n}{n-1}\frac{e^3 n^{n-2}}{(n+3)^n}\big[\ex^{(3)}(n,F)+n^2\big]\\
            &\geq \frac{n}{n-1}\Big[\frac{(\vert A\vert-1)}{\vert A\vert}+\frac{1}{n}\frac{v(F)-\vert A\vert}{\vert A\vert}\Big]\big[1+\sum_{i\geq 2}\frac{\Delta_H^{(i+1)}}{i}\big]\\
            &\geq \Big[\frac{(\vert A\vert-1)}{\vert A\vert}+\frac{1}{n-1}\frac{v(F)-\vert A\vert}{\vert A\vert}\Big]\big[1+\sum_{i\geq 2}\frac{\Delta_{H-v}^{(i+1)}}{i}\big]\,,
        \end{align*}
        where the second inequality follows from $\frac{e^3 (n-1)^{n-3}(n-3)}{(n+2)^{n-1}}\geq \frac{n}{n-1}\frac{e^3 n^{n-2}}{(n+3)^n}$ for large~$n$.
        Hence, we can apply induction to~$H-v$, yielding~$F^+\subseteq H-v\subseteq H$, a contradiction.
    	Thus, we may assume that~$d(v)>\frac{e^3 n^n}{(n+3)^n}\big[\frac{\ex^{(3)}(n,F)}{n}+n\big]$ for every~$v\in V(H)$.

        If either of the conditions~\eqref{it:nolargeedge} or~\eqref{it:Fin4edge} holds, we know that~$H$ contains a copy of~$F$.
    	Abusing notation slightly, we now use~$F$ and~$A$ to denote the images of~$F$ and~$A$ under some embedding.
    	For~$H$ not to contain a copy of~$F^+$, there must be a missing~$2$-edge between every vertex in~$V(H-F)$ and some vertex in~$A$.
    	Therefore, there is some vertex~$a\in A$ with at least~$\frac{n- v(F)}{\vert A\vert}$ missing~$2$-edges.
    	Note also that since~$d^{(i)}(xy)\leq\Delta^{(i)}$ for all~$xy\in V(H)^{(2)}$ and~$H$ is hereditary, we have that, for every~$v\in V$,
    	$$d(v)\leq d^{(2)}(v)\big[1+\sum_{i\geq 2}\frac{\Delta^{(i+1)}}{i}\big]\,.$$
    	Combining this with our lower bound on the degree (of any vertex) and our upper bound on~$d^{(2)}(a)$, we get that
    	\begin{align*}
    	    \frac{e^3 n^n}{(n+3)^n}\big[\frac{\ex^{(3)}(n,F)}{n}+n\big]&<d(a)\leq d^{(2)}(a)\big[1+\sum_{i\geq 2}\frac{\Delta^{(i+1)}}{i}\big]\\
    	    &\leq \big(n-1-\frac{n- v(F)}{\vert A\vert}\big)\big[1+\sum_{i\geq 2}\frac{\Delta^{(i+1)}}{i}\big]\\
    	    &\leq \frac{(\vert A\vert-1)n+ v(F)-\vert A\vert}{\vert A\vert}\big[1+\sum_{i\geq 2}\frac{\Delta^{(i+1)}}{i}\big]\,
    	\end{align*}
    	and, hence,
    	\begin{align*}
    	    &\frac{e^3 n^n}{(n+3)^n}\big[\frac{\ex^{(3)}(n,F)}{n^2}+1\big]<\Big[\frac{(\vert A\vert-1)}{\vert A\vert}+\frac{1}{n}\frac{v(F)-\vert A\vert}{\vert A\vert}\Big]\big[1+\sum_{i\geq 2}\frac{\Delta^{(i+1)}}{i}\big]\,.
    	\end{align*}
    	But this contradicts~\eqref{eq:condonmaxdeg}, meaning that~$H$ does contain a copy of~$F^+$.
    \end{proof}

 \section{Proof of Theorem~\ref{thm:Bollobas}}\label{sec:proofBollobas}
	    \begin{proof}[Proof of Theorem~\ref{thm:Bollobas}]
    	    For the lower bound, simply consider the simplicial complex on~$n$ vertices that has three (almost) balanced partition classes and all~$2$-edges and~$3$-edges which intersect each partition class in at most one vertex.
    	    
    	    For the upper bound, first note that if a simplicial complex contains an edge of size at least four, then it contains a copy of~$F$.
    	    Thus, we only need to show that given~$\varepsilon>0$, there is some~$n_0$ such that every simplicial complex~$H$ on~$n\geq n_0$ vertices with at least~$(\frac{2}{9}+\varepsilon)\binom{n}{3}$ edges, all of which are of size at most three, contains a copy of~$F$.
    	    Next, note that~$F$ is contained in the simplicial complex $$F^+=(\{v_1,v_2,v_3,v_4,v_5\},\{v_1v_2v_3,v_2v_3v_4,v_1v_4v_5\})\,.$$
    	    Bollob\'as~\cite{B:74} showed that for every~$\varepsilon>0$, there is some~$n_1$ such that every~$3$-graph on~$n\geq n_1$ vertices with at least~$(\frac{2}{9}+\frac{\varepsilon}{2})\binom{n}{3}$ edges contains the~$3$-graph $$F_3=(\{v_1,v_2,v_3,v_4,v_5\},\{v_1v_2v_3,v_2v_3v_4,v_1v_4v_5\})\,.$$
    	    From this, we can infer our result as follows.
            For every~$\varepsilon>0$, there is some~$n_2$ such that every simplicial complex~$H$ on~$n\geq n_2$ vertices with at least~$(\frac{2}{9}+\varepsilon)\binom{n}{3}$ edges not containing any edge of size larger than three has at least~$(\frac{2}{9}+\frac{\varepsilon}{2})\binom{n}{3}$ $3$-edges.
    	    Thus, the~$3$-uniform layer of any simplicial complex~$H$ on~$n\geq n_0=\max\{n_1,n_2\}$ vertices with at least~$(\frac{2}{9}+\varepsilon)\binom{n}{3}$ edges contains a copy of~$F_3$, which in the simplicial complex~$H$ induces a copy of~$F^+$ and, thereby, a copy of~$F$.
	    \end{proof}

    \section{Concluding Remarks}
    
    In this work, we studied the Tur\'an problem for simplicial complexes. 
    Since this problem seems to be of comparable difficulty to the Tur\'an problem for uniform hypergraphs, any non-trivial result on the following problem would be of interest.

    \medskip

    \begin{problem}
        Given a simplicial complex~$F$ and a positive integer~$n$, determine~$\ex(n,F)$ or its asymptotics.
    \end{problem}

    \medskip

    Theorem~\ref{thm:bowtie} shows that this problem is distinct from the problem for uniform hypergraphs in that, for some~$F$, the extremal construction for~$F$ has more than one incomplete layer.
    On the other hand, we showed that for several simplicial complexes~$F$, the extremal number is asymptotically given by the trivial bound~$\ex^{(k)}(n,F_{k})+\sum_{i=0}^{k-1}\binom{n}{i}$, where $k = \dim(F)+1$. 
    This suggests the following problem.

    \medskip

    \begin{problem}
        Given an integer~$k\geq 3$, characterise the~$(k-1)$-dimensional simplicial complexes~$F$ with~$\ex(n,F)=\ex^{(k)}(n,F_{k})+\sum_{i=0}^{k-1}\binom{n}{i}$.
    \end{problem}

    \medskip

    As mentioned in the introduction, Hajnal (see~\cite{B:72, FS:05}) initiated the investigation of a variant of the Brown--Erd\H{o}s--S\'os problem~\cite{SEB:73} for simplicial complexes. We believe this special case of the Tur\'an problem for simplicial complexes to be of particular interest.
    Recall that~$\mathcal{F}(k,d)$ denotes the family of all simplicial complexes with vertex set~$[k]$ and~$1+\sum_{i=0}^d\binom{k}{i}$ edges.

    \medskip

    \begin{problem}\label{prob:F}
        For~$k\geq 5$ and~$d\geq 2$, determine~$\ex(n,\mathcal{F}(k,d))$ or its asymptotics.
    \end{problem}
    
    \medskip

    We have already noted that Bollob\'as and Radcliffe \cite{BR:95} solved Problem~\ref{prob:F} for~$k=4$ and~$d=2$.
    A simple inductive argument gives the following upper bound for the case~$k=5$ and~$d=2$.

    \begin{prop} \label{prop:52}
    For every~$n\geq 5$, $\ex(n,\mathcal F(5,2))\leq \tfrac{1}{2}(n^{5/2} + 3n^2)$. 
    \end{prop}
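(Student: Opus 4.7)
The plan is to induct on $n \geq 5$, setting $f(n) := \tfrac{1}{2}(n^{5/2} + 3n^{2})$. The base case $n = 5$ is immediate since an $\mathcal{F}(5,2)$-free complex on $5$ vertices has at most $16 \leq f(5)$ edges. For the inductive step with $n \geq 6$, I want to prove a uniform bound $d(v) + d^{(1)}(v) \leq f(n) - f(n-1)$ valid for every vertex $v$ of an $\mathcal{F}(5,2)$-free complex $H$ on $n$ vertices; the step then closes by applying the inductive hypothesis to $H - v$, since $e(H) = e(H - v) + d(v) + d^{(1)}(v) \leq f(n-1) + (f(n) - f(n-1)) = f(n)$.

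The bound rests on two structural observations. First, $H$ contains no edge of size $4$ or $5$: a $4$-edge $e$ contributes, by the hereditary property, all $16 = 2^{4}$ subsets of $e$ to $H[e]$, so $H[\{w\} \cup e]$ for any $w \in V(H) \setminus e$ has at least $17$ edges, contradicting $\mathcal{F}(5,2)$-freeness; a $5$-edge would directly force $2^{5} = 32$ edges on one $5$-set. Second, the $2$-uniform layer $L_v^{(2)}$ of the link is $K_{2,2}$-free for every $v$: if $\{a,b\} \cup \{c,d\}$ spans a $K_{2,2}$ in $L_v^{(2)}$, then $vac, vad, vbc, vbd \in E(H)$, and heredity places at least $1 + 5 + 8 + 4 = 18$ edges in $\{v,a,b,c,d\}$ (the empty set, five singletons, the pairs $va, vb, vc, vd, ac, ad, bc, bd$, and the four listed triples), another contradiction.

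By the K\H{o}v\'ari--S\'os--Tur\'an bound $\ex(m, K_{2,2}) \leq \tfrac{1}{2}m^{3/2} + \tfrac{m}{4}$ (via the usual double count $\sum_{w}\binom{d(w)}{2} \leq \binom{m}{2}$), the second observation gives $d^{(3)}(v) = e^{(2)}(L_v) \leq \tfrac{1}{2}(n-1)^{3/2} + \tfrac{n-1}{4}$. Combined with $d^{(1)}(v) \leq 1$, $d^{(2)}(v) \leq n - 1$, and $d^{(\geq 4)}(v) = 0$ (from the first observation), this yields
$$d(v) + d^{(1)}(v) \leq \tfrac{1}{2}(n-1)^{3/2} + \tfrac{5(n-1)}{4} + 1.$$

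It then remains to verify that the right-hand side is at most $f(n) - f(n-1)$. The mean value theorem gives $n^{5/2} - (n-1)^{5/2} \geq \tfrac{5}{2}(n-1)^{3/2}$, so $f(n) - f(n-1) \geq \tfrac{5}{4}(n-1)^{3/2} + 3n - \tfrac{3}{2}$, which exceeds the previous display by $\tfrac{3}{4}(n-1)^{3/2} + \tfrac{7n-5}{4} > 0$ for $n \geq 6$. The main --- and only mild --- obstacle is pinning down these lower-order constants cleanly so that the induction closes from $n = 5$ onward; the generous coefficient $\tfrac{3}{2}$ on the $n^{2}$ term in $f(n)$ is precisely what absorbs the $O(n)$ slack in the KST estimate.
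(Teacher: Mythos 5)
Your argument is essentially the paper's own proof, lightly reorganised: the same induction with threshold $\tfrac12(n^{5/2}+3n^2)$, the same exclusion of edges of size at least $4$, and the same use of K\H{o}v\'ari--S\'os--Tur\'an on the $2$-uniform link, which must be $C_4$-free since a $C_4$ together with its apex spans at least $18\ge 17$ edges on five vertices. The only difference is that where the paper argues by dichotomy (either some vertex has degree at most $\tfrac12(n^{3/2}+3n-2)$, delete it and induct, or all degrees are large and some link is forced to contain a $C_4$), you run KST contrapositively to get a uniform upper bound $d(v)+d^{(1)}(v)\le \tfrac12(n-1)^{3/2}+\tfrac54(n-1)+1$ for every vertex and then delete an arbitrary one; your closing arithmetic against $f(n)-f(n-1)$ is correct.

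One step needs a patch. Your claim that an $\mathcal F(5,2)$-free complex contains no $4$-edge relies on $H[\{w\}\cup e]$ having a seventeenth edge beyond the $16$ subsets of $e$, i.e.\ on the singleton $\{w\}$ being an edge. Under the paper's definition a vertex need not lie in any edge, so this can fail: the downward closure of a single $4$-set, with all remaining vertices isolated, is $\mathcal F(5,2)$-free and contains a $4$-edge. The paper avoids this because it establishes its minimum-degree condition \emph{before} discussing $4$-edges, which forces every singleton to be an edge; since you have no minimum-degree step, you should add the observation that if some vertex outside $e$ lies in any edge then heredity supplies the singleton and the $17$-edge contradiction goes through, while otherwise every edge of $H$ is a subset of $e$, so $e(H)\le 16$ and the bound holds trivially. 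With that one-line fix the proof is complete.
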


    \begin{proof}
        We proceed by induction, noting that the base case~$n=5$ follows easily since~$\tfrac{1}{2}(n^{5/2} + 3n^2) \geq 55> 32 = \sum_{i=0}^{5}\binom{5}{i}$.
        Suppose now that~$H$ is a simplicial complex on~$n$ vertices with~$e(H)>\tfrac{1}{2}(n^{5/2} + 3n^2)$.
       
        If there is a vertex~$v$ such that~$d(v)\leq \tfrac{1}{2}(n^{3/2} + 3n-2)$, then 
        $$e(H-v) > \frac{1}{2}(n^{5/2} + 3n^{2} - n^{3/2} - 3n + 2) -1 \geq  \frac{1}{2}\big((n-1)^{5/2} + 3(n-1)^{2}\big), $$
        so, by induction, $H-v$ contains some $F\in \mathcal F(5,2)$. 
        Therefore, we may assume that
        \begin{align}\label{eq:C4mindeg}
            d(v)>\tfrac{1}{2}(n^{3/2} + 3n-2)
        \end{align}
        for every vertex~$v\in V(H)$. 
        
        Suppose that there is an edge $e\in E^{(4)}(H)$. 
        Then it is easy to see that for any vertex $v\in V(H)\setminus e$ the set~$e\cup\{v\}$ spans at least~$17$ edges. 
        Hence, we may assume that~$E^{(4)}=\emptyset$ and so \eqref{eq:C4mindeg} yields that~$d^{(3)}(v) \geq \tfrac{1}{2}(n^{3/2} + n)$ for every vertex~$v\in V(H)$.
        Therefore, for any vertex~$v\in V(H)$, the classical theorem of K\H ovari, S\'os and Tur\'an \cite{KST} implies that there is a ($2$-uniform) copy of~$C_4$ in the link of~$v$. 
        It is not hard to check that the vertices of such a~$C_4$ together with~$v$ span at least~$18$ edges.         
    \end{proof}

    \begin{question}\label{quest:5,17}
        Is it true that~$\ex(n, \mathcal F(5,2)) =\Omega(n^{5/2})$?
    \end{question}

    \medskip
    
    Let~$LC_4^{(3)}$ be the~$3$-uniform hypergraph with~$V(LC_4^{(3)}) = \{u,v_1,\dots, v_4\}$ and~$E(LC_4^{(3)})=\{uv_iv_{i+1}\colon i\in[4]\}$, where the indices are viewed as in~$\mathds{Z}/4\mathds{Z}$.
    In other words,~$LC_4^{(3)}$ is the~$3$-uniform hypergraph consisting of a vertex~$u$ with a ($2$-uniform) $C_4$ in its link. 
    By following the proof of Proposition~\ref{prop:52}, we get that~$\ex(n, \mathcal F(5,2))\leq \ex^{(3)}\big(n,LC_4^{(3)}\big) + \binom{n}{2}+n+1$. 
    In particular, improving the bound $\ex^{(3)}\big(n,LC_4^{(3)}\big) = O(n^{5/2})$ would answer Question~\ref{quest:5,17} in the negative. 
    However, a result of Brown, Erd\H{o}s and S\'os~\cite[Theorem~3]{SEB:73} shows that $\ex^{(3)}\big(n,LC_4^{(3)}\big) = \Omega(n^{5/2})$, perhaps giving some evidence towards a positive answer for Question~\ref{quest:5,17}.
    
    




    \medskip
    
    Let us conclude by briefly discussing the problem of determining the extremal number of the simplicial complex induced by the~$k$-uniform clique on~$t$ vertices for~$t\geq k+2$.
    Denote the simplicial complex given by the downward closure of~$K_t^{(k)}$ by~$K(t,k)$.
    
    \begin{problem}\label{prob:cliques}
        Given~$t\geq k+2$, determine the asymptotics of~$\ex(n,K(t,k))$.
    \end{problem}

    A simple symmetrisation argument yields the following result for the case~$k=2$.
    \begin{observation}\label{obs:graphcliques}
        For~$t\geq 3$,~$\varepsilon>0$ and~$n$ sufficiently large, $\ex(n,K(t+1,2)) =(1\pm\varepsilon) \frac{t!}{t^{t}}\binom{n}{{t}}$.
    \end{observation}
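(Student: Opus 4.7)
The plan is to reduce the problem to a classical theorem of Zykov on maximising clique counts in~$K_{t+1}$-free graphs. The first observation is that a simplicial complex~$H$ on~$n$ vertices is~$K(t+1,2)$-free if and only if its~$2$-uniform layer~$H_2$ is~$K_{t+1}$-free: indeed, the edges of~$K(t+1,2)$ of size~$\geq 2$ are precisely the~$\binom{t+1}{2}$ pairs among its~$t+1$ vertices, so containment of~$K(t+1,2)$ in~$H$ amounts to containment of~$K_{t+1}$ in~$H_2$ (the smaller sub-edges come for free from heredity). Moreover, any edge of~$H$ of size~$i\geq 2$ is, by heredity, an~$i$-clique of~$H_2$, and hence
\[e(H)\leq \sum_{i=0}^{t} k_i(H_2),\]
where~$k_i(G)$ denotes the number of~$i$-cliques of~$G$ and the sum terminates at~$i=t$ since~$H_2$ is~$K_{t+1}$-free.

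For the lower bound, I would take~$H$ to be the flag complex of the Tur\'an graph~$T_t(n)$, i.e., the simplicial complex whose edges are exactly the cliques of~$T_t(n)$. This is manifestly~$K(t+1,2)$-free, and the number of~$t$-cliques in~$T_t(n)$ equals the product of the part sizes, which is~$(1-o(1))(n/t)^t=(1-o(1))\tfrac{t!}{t^t}\binom{n}{t}$, already giving~$e(H)\geq (1-\varepsilon)\tfrac{t!}{t^t}\binom{n}{t}$ for~$n$ sufficiently large.

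For the upper bound, I would apply Zykov's theorem, which says that for every integer~$i$ and every~$K_{t+1}$-free graph~$G$ on~$n$ vertices one has~$k_i(G)\leq k_i(T_t(n))$; the proof is a short symmetrisation argument in which one repeatedly replaces a non-adjacent vertex by a copy of another vertex of larger-or-equal weighted degree, which never decreases any clique count. Combined with the previous display, this yields
\[e(H)\leq \sum_{i=0}^{t} k_i(T_t(n)) = (n/t)^t + O(n^{t-1}) = (1+o(1))\tfrac{t!}{t^t}\binom{n}{t},\]
matching the lower bound.

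Since Zykov's theorem does all the heavy lifting, there is no serious obstacle. The only small care needed is in absorbing the error coming from the unequal part sizes of~$T_t(n)$ when~$t\nmid n$ and from the contributions of cliques of size~$i<t$ into the~$\varepsilon$-slack; both are of order~$O(n^{t-1})$, hence negligible against the main term of order~$n^t$.
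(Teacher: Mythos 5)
Your proposal is correct, but it follows a different route from the paper's own argument. The paper proves the observation by symmetrising the simplicial complex directly: if $xz\in E$ while $xy,yz\notin E$, one replaces the lower-degree vertices by copies of a higher-degree one, so an extremal $K(t+1,2)$-free complex has non-adjacency as an equivalence relation and is therefore complete $r$-partite with $r\leq t$, after which the count is immediate. You instead reduce to the $2$-uniform layer: $H$ is $K(t+1,2)$-free iff $H_2$ is $K_{t+1}$-free, every edge of size $i\geq 2$ is an $i$-clique of $H_2$, and then Zykov's theorem bounds each clique count $k_i(H_2)$ by $k_i(T_t(n))$, with the flag complex of $T_t(n)$ matching this from below. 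Both arguments are sound (the symmetrisation engine is the same, but you invoke it as a black box at the graph level), and your reduction is essentially the $k=2$ case of the paper's Observation~\ref{obs:cliques}; indeed, the paper notes that combining that observation with a result of Alon and Shikhelman~\cite{AS:16} gives a second proof of this statement, which is close in spirit to yours. What your route buys is the exact extremal value $\ex(n,K(t+1,2))=\sum_{i=0}^{t}k_i(T_t(n))$ rather than only the asymptotics; what the paper's direct symmetrisation buys is a short self-contained argument that also illustrates the symmetrisation technique used elsewhere in the paper.
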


    \begin{proof}
        Let~$H$ be a simplicial complex on~$n$ vertices without a~$K(t+1,2)$ and, subject to this condition, with the maximum number of edges.
        Assume that there are three vertices $x$, $y$ and $z$ such that $xy$ and $yz$ are not in $E(H)$ but $xz$ is.
        If~$d(y)\geq d(x),d(z)$, then we can replace $x$ and $z$ by copies of $y$.
        This would yield a simplicial complex without a~$K(t+1,2)$ but with more edges than~$H$, a contradiction.
        Otherwise, for one of~$x$ and~$z$, say, without loss of generality, for~$x$, we have~$d(x)> d(y)$.
        In this case we can replace $y$ by a copy of $x$, again yielding a simplicial complex without a~$K(t+1,2)$ but with more edges than~$H$.
        Thus, we may assume that non-adjacency is an equivalence relation, so that~$H$ is complete~$r$-partite for some~$r\leq t$, that is, the vertex set can be partitioned into~$r$ sets such that every edge contains at most one vertex from each of these sets.
        Therefore,~$H$ has~$\max_{r\in[t]}(1\pm\varepsilon)\frac{r!}{r^r}\binom{n}{r}=(1\pm\varepsilon)\frac{t!}{t^t}\binom{n}{t}$ edges.
    \end{proof}

    This problem is related to another much-studied problem in extremal combinatorics.
    Following Alon and Shikhelman~\cite{AS:16}, given two~$k$-graphs~$F$ and~$G$, we write~$\ex(n,F,G)$ for the maximum number of copies of~$F$ in a $G$-free~$k$-graph on~$n$ vertices.
    
    \begin{observation}\label{obs:cliques}
        For~$t\geq k+2$, $\varepsilon>0$ and $n$ sufficiently large, 
        $$\ex(n,K(t,k)) = (1\pm\varepsilon)\ex(n,K_{t-1}^{(k)}, K_{t}^{(k)})\,.$$
    \end{observation}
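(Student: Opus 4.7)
The plan is to establish both directions of the equality by exploiting a simple equivalence, namely that for a simplicial complex $H$ and a $t$-set $T\subseteq V(H)$, the induced subcomplex on $T$ contains $K(t,k)$ if and only if every $k$-subset of $T$ lies in $H_k$. The ``if'' direction uses simpliciality to supply all smaller subsets automatically, and the ``only if'' direction is trivial. In particular, $H$ is $K(t,k)$-free if and only if $H_k$ is $K_t^{(k)}$-free, which is the observation that drives everything.

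For the lower bound, I would take a $K_t^{(k)}$-free $k$-graph $G$ on $n$ vertices achieving $\ex(n,K_{t-1}^{(k)},K_t^{(k)})$ copies of $K_{t-1}^{(k)}$, and form the simplicial complex $H$ whose edges consist of all subsets of $V(G)$ of size smaller than $k$, all edges of $G$, and every subset of each $(t-1)$-set spanning a copy of $K_{t-1}^{(k)}$ in $G$. The only non-trivial check that $H$ is a simplicial complex is that a $k$-subset of such a $(t-1)$-set is automatically an edge of $G$. By the equivalence above, $H$ is $K(t,k)$-free since $H_k=G$. The $(t-1)$-edges of $H$ are in bijection with the copies of $K_{t-1}^{(k)}$ in $G$, so $e(H)\geq\ex(n,K_{t-1}^{(k)},K_t^{(k)})$.

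For the upper bound, suppose $H$ is a $K(t,k)$-free simplicial complex on $n$ vertices. The equivalence forbids edges of size at least $t$ (any such edge would span $K(t,k)$ by simpliciality) and forces $H_k$ to be $K_t^{(k)}$-free. Moreover, for each $i\in\{k,\dots,t-1\}$, every $i$-edge of $H$ yields a distinct $K_i^{(k)}$ copy in $H_k$ via its $k$-subsets, so $e^{(i)}(H)\leq\binom{n}{i}$ for $i\leq t-2$, while $e^{(t-1)}(H)$ is at most the number of copies of $K_{t-1}^{(k)}$ in $H_k$, which is at most $\ex(n,K_{t-1}^{(k)},K_t^{(k)})$. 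Summing,
\begin{align*}
e(H)\leq\sum_{i=0}^{t-2}\binom{n}{i}+\ex(n,K_{t-1}^{(k)},K_t^{(k)}).
\end{align*}
The final step is to absorb the first sum, which is $O(n^{t-2})$, into the $\varepsilon$ slack. This works because the $(t-1)$-partite Tur\'an-type construction (vertices split into $t-1$ equal parts, $k$-edges being the transversals) is $K_t^{(k)}$-free and has $\Omega(n^{t-1})$ copies of $K_{t-1}^{(k)}$, so $\ex(n,K_{t-1}^{(k)},K_t^{(k)})=\Omega(n^{t-1})$ dominates $n^{t-2}$. No step looks genuinely hard; the only matter of care is to isolate the equivalence cleanly and confirm that the lower-order terms are truly negligible against the generalised Tur\'an number.
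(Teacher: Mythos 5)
Your proof is correct and follows essentially the same route as the paper: the lower bound takes the downward closure of the $(t-1)$-sets spanning copies of $K_{t-1}^{(k)}$ in an extremal $K_t^{(k)}$-free $k$-graph, and the upper bound counts $(t-1)$-edges as copies of $K_{t-1}^{(k)}$ in the $K_t^{(k)}$-free $k$-uniform layer while absorbing the $O(n^{t-2})$ edges of smaller size into the $\varepsilon$ slack. Your write-up is a bit more explicit than the paper's (the clean layer equivalence and the $\Omega(n^{t-1})$ bound via the $(t-1)$-partite construction are left implicit there), but the ideas are the same.
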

    
    \begin{proof}
        Consider a simplicial complex~$H$ on~$n$ vertices with at least~$(1+\varepsilon)\ex(n,K_{t-1}^{(k)}, K_{t}^{(k)})$ edges.
        Any~$t$-edge in~$H$ would immediately yield a~$K(t,k)$, so we can assume that~$H$ does not contain any edges of size larger than~$t-1$.
        Thus, since~$n$ is taken sufficiently large, $H$ contains at least~$(1+\varepsilon/2)\ex(n,K_{t-1}^{(k)}, K_{t}^{(k)})$ edges of size~$t-1$.
        Each of these induces a~$K_{t-1}^{(k)}$.
        But, by definition, this implies that~$H$ contains a~$K_t^{(k)}$ and, thereby, a~$K(t,k)$.

        Suppose now that~$H''$ is a~$k$-uniform hypergraph on~$n$ vertices that maximises the number of copies of~$K_{t-1}^{(k)}$ without containing a~$K_t^{(k)}$.
        Let~$H'$ be the~$(t-1)$-uniform hypergraph on the same vertex set that is obtained by placing a~$(t-1)$-edge for every~$K_{t-1}^{(k)}$ of~$H''$
        and let~$H$ be the 
        downward closure of~$H'$.
        Clearly, since~$n$ is large,~$H$ has more than~$\ex(n,K_{t-1}^{(k)},K_t^{(k)})$ edges.
        It is also easy to check that~$H$ does not contain a copy of~$K(t,k)$. 
    \end{proof}

    Combining Observation~\ref{obs:cliques} with~\cite[Proposition~2.2]{AS:16}, which implies that $\ex(n,K_{t-1}, K_{t}) = (1 + o(1)) (\frac nt)^t$, gives another proof of Observation~\ref{obs:graphcliques}. 
    Moreover, using flag algebras, Bodnar~\cite{bodnar} recently proved that~$\ex(n,K_4^{(3)},K_5^{(3)})=(\frac{3}{8} +o(1))\binom{n}{4}$.
    By Observation~\ref{obs:cliques}, this implies that~$\ex(n,K(5,3))=(\frac{3}{8}+o(1))\binom{n}{4}$.
    To the best of our knowledge, Problem~\ref{prob:cliques} remains open for all other $(t, k)$ with~$t\geq 6$ or~$k\geq 3$.

	\begin{bibdiv}
		\begin{biblist}

            \bib{AS:16}{article}{
                author = {Alon, Noga}, author = {Shikhelman, Clara},
                title = {Many {$T$} copies in {$H$}-free graphs},
                journal = {J. Combin. Theory Ser. B},
                volume = {121},
                date = {2016},
                pages = {146--172},
                issn = {0095-8956,1096-0902},
                review = {\MR{3548290}},
                doi = {10.1016/j.jctb.2016.03.004},
            }

            \bib{BR:19}{article}{
    			author={Bellmann, Louis},
    			author={Reiher, {Chr}istian},
    			title={Tur\'{a}n's theorem for the Fano plane},
    			journal={Combinatorica},
    			volume={39},
    			date={2019},
    			number={5},
    			pages={961--982},
    			issn={0209-9683},
    			review={\MR{4039597}},
    			doi={10.1007/s00493-019-3981-8},
    		}

            \bib{bodnar}{article}{
                title={Generalized Tur\'an problem for complete hypergraphs},
                author={Bodnar, Levente},
                journal={preprint available at arXiv:2302.07571 [math.CO]},
                year={}
                }
            
            \bib{B:74}{article}{
                author={Bollob\'{a}s, B\'{e}la},
                title={Three-graphs without two triples whose symmetric difference is
                contained in a third},
                journal={Discrete Math.},
                volume={8},
                date={1974},
                pages={21--24},
                issn={0012-365X},
                review={\MR{345869}},
                doi={10.1016/0012-365X(74)90105-8},
            }
    	
    		\bib{BR:95}{article}{
    			author={Bollob\'{a}s, B\'{e}la},
    			author={Radcliffe, A. J.},
    			title={Defect Sauer results},
    			journal={J. Combin. Theory Ser. A},
    			volume={72},
    			date={1995},
    			number={2},
    			pages={189--208},
    			issn={0097-3165},
    			review={\MR{1357769}},
    			doi={10.1016/0097-3165(95)90060-8},
    		}

            \bib{B:72}{article}{
    			author={Bondy, John A.},
    			title={Induced subsets},
    			journal={J. Combin. Theory Ser. B},
    			volume={12},
    			number={2},
    			pages={201--202},
    			year={1972},
    			publisher={Academic Press},
                review = {\MR{319773}},
                doi = {10.1016/0095-8956(72)90025-1},
    		}


            \bib{BG:19}{article}{
    			author={Bukh, Boris},
    			author={Goaoc, Xavier},
    			title={Shatter functions with polynomial growth rates},
    			journal={SIAM J. Discrete Math.},
    			volume={33},
    			date={2019},
    			number={2},
    			pages={784--794},
    			issn={0895-4801},
    			review={\MR{3948236}},
    			doi={10.1137/17M1113680},
    		}

            \bib{DF:00}{article}{
    			author={De Caen, Dominique},
    			author={F\"{u}redi, Zolt\'{a}n},
    			title={The maximum size of 3-uniform hypergraphs not containing a Fano
    				plane},
    			journal={J. Combin. Theory Ser. B},
    			volume={78},
    			date={2000},
    			number={2},
    			pages={274--276},
    			issn={0095-8956},
    			review={\MR{1750899}},
    			doi={10.1006/jctb.1999.1938},
		      }


            \bib{ES:66}{article}{
    			author={Erd\H{o}s, P.},
    			author={Simonovits, M.},
    			title={A limit theorem in graph theory},
    			journal={Studia Sci. Math. Hungar.},
    			volume={1},
    			date={1966},
    			pages={51--57},
    			issn={0081-6906},
    			review={\MR{205876}},
		      }

            \bib{ES:46}{article}{
               author={Erd\H{o}s, P.},
               author={Stone, A. H.},
               title={On the structure of linear graphs},
               journal={Bull. Amer. Math. Soc.},
               volume={52},
               date={1946},
               pages={1087--1091},
               issn={0002-9904},
               review={\MR{0018807}},
               doi={10.1090/S0002-9904-1946-08715-7},
            }
        
            \bib{F:78}{article}{
    			author={Frankl, P.},
    			title={On a problem of Bondy and Hajnal},
    			language={English, with French summary},
    			conference={
    				title={Probl\`emes combinatoires et th\'{e}orie des graphes},
    				address={Colloq. Internat. CNRS, Univ. Orsay, Orsay},
    				date={1976},
    			},
    			book={
    				series={Colloq. Internat. CNRS},
    				volume={260},
    				publisher={CNRS, Paris},
    			},
    			date={1978},
    			pages={159},
    			review={\MR{539967}},
    		}
            
            \bib{F:83}{article}{
    			author={Frankl, Peter},
    			title={On the trace of finite sets},
    			journal={J. Combin. Theory Ser. A},
    			volume={34},
    			number={1},
    			pages={41--45},
    			year={1983},
    			publisher={Elsevier}
                    review = {\MR{685210}},
                    doi = {10.1016/0097-3165(83)90038-9},
    		}

            \bib{FF:87}{article}{
               author={Frankl, P.},
               author={F\"{u}redi, Z.},
               title={Exact solution of some Tur\'{a}n-type problems},
               journal={J. Combin. Theory Ser. A},
               volume={45},
               date={1987},
               number={2},
               pages={226--262},
               issn={0097-3165},
               review={\MR{0894820}},
               doi={10.1016/0097-3165(87)90016-1},
            }

            \bib{FP:94}{article}{
            author = {F\"{u}redi, Z.}, author = {Pach, J.},
            title = {Traces of finite sets: extremal problems and geometric
              applications},
            conference = {  
            title = {Extremal problems for finite sets ({V}isegr\'{a}d, 1991)},},
            book = {
            series = {Bolyai Soc. Math. Stud.},
            volume = {3},
            publisher = {J\'{a}nos Bolyai Math. Soc., Budapest},},
            pages = {251--282},
            date = {1994},
            review = {\MR{1319167}},
            }

            \bib{FS:05}{article}{
    			author={F\"{u}redi, Zolt\'{a}n},
    			author={Simonovits, Mikl\'{o}s},
    			title={Triple systems not containing a Fano configuration},
    			journal={Combin. Probab. Comput.},
    			volume={14},
    			date={2005},
    			number={4},
    			pages={467--484},
    			issn={0963-5483},
    			review={\MR{2160414}},
    			doi={10.1017/S0963548305006784},
    		}

            \bib{KNS:64}{article}{
               author={Katona, Gyula},
               author={Nemetz, Tibor},
               author={Simonovits, Mikl\'{o}s},
               title={On a problem of Tur\'{a}n in the theory of graphs},
               language={Hungarian, with English and Russian summaries},
               journal={Mat. Lapok},
               volume={15},
               date={1964},
               pages={228--238},
               issn={0025-519X},
               review={\MR{0172263}},
            }

            \bib{KS:05}{article}{
    			author={Keevash, Peter},
    			author={Sudakov, Benny},
    			title={The Tur\'{a}n number of the Fano plane},
    			journal={Combinatorica},
    			volume={25},
    			date={2005},
    			number={5},
    			pages={561--574},
    			issn={0209-9683},
    			review={\MR{2176425}},
    			doi={10.1007/s00493-005-0034-2},
    		}

            \bib{K:11}{article}{
    			author={Keevash, Peter},
    			title={Hypergraph Tur\'{a}n problems},
    			conference={
    				title={Surveys in combinatorics 2011},
    			},
    			book={
    				series={London Math. Soc. Lecture Note Ser.},
    				volume={392},
    				publisher={Cambridge Univ. Press, Cambridge},
    			},
    			date={2011},
    			pages={83--139},
    			review={\MR{2866732}},
    		}

            \bib{KST}{article}{
    			author={T. K\H ov\'ari},
                author={V. T. S\'os},
                author={P. Tur\'an},
    			title={On a problem of K. Zarankiewicz},
    			journal={Colloq. Math.},
    			volume={3},
    			date={1954},
    			pages={50--57}
            }
            
            \bib{S:72}{article}{
    			author={Sauer, N.},
    			title={On the density of families of sets},
    			journal={J. Combin. Theory Ser. A},
    			volume={13},
    			date={1972},
    			pages={145--147},
    			issn={0097-3165},
    			review={\MR{307902}},
    			doi={10.1016/0097-3165(72)90019-2},
    		}

            \bib{Sh:72}{article}{
    			author={Shelah, Saharon},
    			title={A combinatorial problem; stability and order for models and
    				theories in infinitary languages},
    			journal={Pacific J. Math.},
    			volume={41},
    			date={1972},
    			pages={247--261},
    			issn={0030-8730},
    			review={\MR{307903}},
    		}

            \bib{SEB:73}{article}{
                author={S\'{o}s, V. T.},
                author={Erd\H{o}s, P.},
                author={Brown, W. G.},
                title={On the existence of triangulated spheres in $3$-graphs, and
                related problems},
                journal={Period. Math. Hungar.},
                volume={3},
                date={1973},
                number={3-4},
                pages={221--228},
                issn={0031-5303},
                review={\MR{0323647}},
                doi={10.1007/BF02018585},
            }
            
            \bib{T:41}{article}{
			author={Tur\'{a}n, Paul},
			title={Eine Extremalaufgabe aus der Graphentheorie},
			language={Hungarian, with German summary},
			journal={Mat. Fiz. Lapok},
			volume={48},
			date={1941},
			pages={436--452},
			issn={0302-7317},
			review={\MR{18405}},
		}
	
		\bib{VC:68}{article}{
			author={Vapnik, V. N.},
			author={\v{C}ervonenkis, A. Ja.},
			title={The uniform convergence of frequencies of the appearance of events
				to their probabilities},
			language={Russian},
			journal={Dokl. Akad. Nauk SSSR},
			volume={181},
			date={1968},
			pages={781--783},
			issn={0002-3264},
			review={\MR{0231431}},
		}
			
		\end{biblist}
	\end{bibdiv}	
\end{document}